\renewcommand{\mid}{\,|\,}
\DeclareMathOperator*{\esssup}{ess\,sup}
\def \R{\mathbb{R}}
\def \P{\mathbb{P}}
\def \Q{\mathbb{Q}}
\def \N{\mathbb{N}}
\def \E{\mathbb{E}}
\def \L{\mathbb{L}}
\begin{document}
	
	\title{BSDEs driven by $|z|^2/y$ and applications to PDEs and decision theory}
	\thanks{We Thank Guy Barles, Mohammed Hassani, Fran\c cois Murat Youssef Ouknine
		and Hatem Zaag for helpful discussions.}
	
	%\thanks{}

	\author[]{Khaled Bahlali}
	\author[]{Ludovic Tangpi}

	\abstract{
		Existence and uniqueness is established for a large class of backward
		stochastic differential equations which contain singular terms of the form
		$\pm|z|^2/y$. 
		The results are applied to investigate singular partial differential equations
		(PDEs) and to decision theory problems that cannot be studied using classical
		regular BSDEs. 
		The application to PDEs concerns the existence of viscosity solutions to  PDEs
		containing a singular term of the form $\pm|\nabla v|^2/v$ with rather weak
		assumptions on the regularity of the coefficients. 
		Such PDEs with singularity in the value process appear in several applications
		in physics and economics. 
		Regarding the application to decision theory, on the one hand, we use singular
		BSDEs to solve portfolio optimization problems with logarithm and power utility
		and non-trivial terminal endowment.
		Moreover, we derive existence and uniqueness of the general version of
		the non-Markovian Kreps-Porteus stochastic differential utility defined by
		\citet{Duf-Eps92} and constructed, in the Markovian case using PDE arguments by
		\citet{Duf-Lions}.  
	}

	\date{\today}
	%\ArXiV{91G80, 90B50, 60E10, 91B30} 

	\vskip 1cm
	\keyWords{Domination condition; singularity at zero; viscosity solutions;
		probabilistic representation, PDE with Neumann lateral boundary; decision theory
		in finance; convex duality.} 
	
	\vskip 1cm
	\keyAMSClassification{60H10, 60H20, 35K58, 35K67, 91G10.} 
	
	%\maketitleludo
	\setcounter{page}{0} %This is needed because of the mess created by the thank
	%note that creates a new empty page.
	\maketitle
	
	%\tableofcontents

	\section{Introduction}
	
	Let $(\Omega, {\cal F}, \P)$ be a probability space carrying a $d$-dimensional
	Brownian motion denoted $W$ and equipped with the $\P$-completion of the
	filtration $(\mathcal{F}_t) :=\sigma(W_s, t\le s\le T)$ generated by $W$, with
	$T\in (0,\infty)$.
	We equip $[0,T]\times \Omega$ with the progressive $\sigma$-field.
	The goal of this paper is to give conditions for existence and uniqueness of
	quadratic backward stochastic differential equations (BSDEs) of the form
	\begin{equation}
		\label{eq:bsde intro}
		Y_t = \xi + \int_t^TH(u,Y_u,Z_u)\,du - \int_t^TZ_u\,dW_u
	\end{equation} 
	when $H$ has a possible singularity at zero, and to investigate a few
	applications notably in singular PDE theory. 
	Recall that a solution of equation \eqref{eq:bsde intro}  is an adapted process
	$(Y,Z)$ which satisfies \eqref{eq:bsde intro} such that $Y$ is continuous and
	$\int_0^T|Z_s|^2ds < \infty$ $\P$--a.s.
	The following proposition gives a flavor of the type of result we present in the
	present paper.
	Notice that this is the simplest (or canonical) form of the equation under
	consideration.
\begin{proposition}
	\label{aperitif}
	Let $\xi$ be an $\mathcal{F}_T$-measurable random variable such that $\xi>0$ $\P$-a.s. or $\xi<0$ $\P$--a.s.
	\begin{itemize}
		\item[(a)] If $|\xi|^3$ is integrable, then the BSDE
		\begin{equation}
		\label{eq:z2/y.intro}
			Y_t = \xi + \int_t^T \frac{|Z_u|^2}{Y_u}\,du -\int_t^TZ_u\,dW_u
		\end{equation}
		has a unique solution such that $\E[\sup_{0\le t\le T}|Y_t|^2] + \E[\int_0^T|Z_t|^2\,dt]<\infty$. 
		Moreover, uniqueness holds among solutions such that $Y^3$ is in the class $(D)$. 			
		\item[(b)] If $1/\xi$ is integrable, then the BSDE
		\begin{equation}
		\label{eq:z2/y.intro}
			Y_t = \xi - \int_t^T \frac{|Z_u|^2}{Y_u}\,du - \int_t^TZ_u\,dW_u
		\end{equation}
		has a solution and uniqueness holds among solutions such that $1/Y$ is in the class $(D)$.
		If moreover $\xi$ is square integrable, then there exists a unique solution such that  $\E[\sup_{0\le t\le T}|Y_t|^2] + \E[\int_0^T|Z_t|^2\,dt]<\infty$.
	\end{itemize}
\end{proposition} 
We emphasize that the terminal condition $\xi$ is not assumed bounded, and can be arbitrarily close to zero.
The interest in backward SDEs with generators of the form $H(y,z) = |z|^2/y$ first arose from the work of \citet{Duf-Eps92} who analyzed existence and uniqueness of stochastic differential utilities.
In fact, due to the singularity at zero, the probabilistic method developed in that paper did not cover the utility model suggested by \citet{Kreps-Port78}, see subsection \ref{sec:SDU} for details.
It should be noted that the Kreps-Porteus utility in its differential form is an important class of utility functions, allowing for instance (when it exists) to develop a two-factor capital asset pricing model, see \cite{Duf-Eps92}.
In the Markovian case, the existence of the Kreps-Porteus stochastic differential utility has been proved by \citet{Duf-Lions} by deriving (Sobolev) solutions of a parabolic partial differential equation (PDE) of the form
\begin{equation}
\label{eq:pde intro}
	\begin{cases}
		\partial_tv + \frac 12\Delta v + \delta \frac{|\nabla v|^2}{v}= 0, \\
		v(T,x) = h(x)
	\end{cases}
\end{equation}
with the gradient and Laplacian operators acting on the spacial variable.  
As already pointed out in \cite[Section 2.4]{Duf-Lions}, the case $\delta<-1$ is particularly delicate.
This is materialized in the present probabilistic setting by the fact that the change of variable that we use "remove" the singularity is not well-defined when $\delta<-1$, see the proof of Proposition \ref{prop:-absury}. 

The main objective of the present paper is to develop a probabilistic method allowing to prove existence of quadratic singular BSDEs.
As a byproduct, this allows us to solve equations of the form \eqref{eq:pde intro} in the viscosity sense under  rather weak regularity conditions on the data.
Furthermore, a probabilistic representation of such PDEs is also given.
As it turns out, the relevance of equations \eqref{eq:z2/y.intro} and \eqref{eq:pde intro} is not restricted to the stochastic differential utility theory and economics.
They also naturally appear in quantitative finance and in physics.
	
In quantitative finance, BSDEs with generators of the form $|z|^2/y$ appear in problems of optimal investment and decision theory, see e.g. \citet{Nutz2012}, \citet{optimierung}, \citet{Xing17}, \citet{kharroubial} and \citet{epstein02}, but also in interest rates problems, see \citet{Hyndman09}.
We also refer to Subsection \ref{sec:utility max} below where we discuss a portfolio optimization problem.
The gist being that, while quadratic equations (generators of the form $|z|^2$) allow to solve exponential utility problems the case of power or logarithmic utility with non-zero terminal endowment require to solve singular equations with generators of the form $|z|^2/y$.
Thus, our results allow to study such utility maximization	problems which cannot be studied by the simpler BSDEs driven by $|z|^2$.

In physics, equations of the form \eqref{eq:pde intro} appear in modeling of quenching problems, see e.g. \citet{Dong-Levi89}, \citet{Merle92}, \citet{Mer-Zaag97} and \citet{Chap-Hun-Ock} for details.
These equations are also used to model gas flow in porous media, see e.g. \citet{Giachetti2012},   \citet{Giac-Mur09}.
We emphasis that the existing literature deals with existence (and properties) of \emph{Sobolev solutions} in the $H^1$-sense.
Note in passing that the link between BSDEs and Sobolev solutions of parabolic PDEs was considered by \citet{Bar-Les97} for BSDEs with Lipschitz generators and by Bahlali et al \cite{beh2015} in the case when the generator in of superlinear growth.

As mentioned above, the generators in the statement of Proposition \ref{aperitif} and the nonlinearity in \eqref{eq:pde intro} are the "canonical forms" of generators we consider.
Actually, it will be assumed that the generator $H$ is a continuous function satisfying a bound of the form 
\begin{equation}
\label{eq:H intro}
	0\le H(t,\omega,y,z) \le \alpha_t(\omega) +\beta_t(\omega)y + \gamma_t(\omega)z \pm \frac{\delta}{y}|z|^2 \quad \text{for } y>0.
\end{equation}
In particular, the generators will not necessarily be locally Lipschitz	continuous (in some cases can even be discontinuous in $y$). 
As a consequence, the problem will not be amenable to techniques involving Picard iterations, monotone stability, a priori estimates or localizations and
approximations which are prevalent in the literature.
The method we develop in the present article is rather based on a combination of a simple change of variable technique akin to Zvonkin's transform in the theory of stochastic differential equations and a domination argument developed in 
\cite{Bahlali_domi}. 
	
Also due to the singularity of the generator, the proof we give of uniqueness for generators of the form \eqref{eq:H intro} does not follow the well-trodden paths of comparison principles or Banach fixed point theorem as customary.
Note however that some form of comparison still holds in the "canonical cases" of Proposition \ref{aperitif}.
We instead base our arguments on convex duality techniques for BSDEs initiated by \citet{tarpodual}.
This requires an additional convexity condition on the generator.

Let us now say a few words on the extensive literature on BSDEs and their connections to parabolic PDEs.
When the generator $H$ is Lipschitz continuous (in $(y,z)$) and $\xi$ is square integrable, \citet{peng01} proved existence and uniqueness of a square integrable solution $(Y,Z)$.
The case where the generator can have quadratic growth in $z$ (i.e. grows slower than $|z|^2$) is particularly relevant in several applications.
It has initially been investigated by \citet{Duf-Eps92} and then by \citet{kobylanski01} for bounded terminal conditions $\xi$ by developing a monotone stability method.
As commonly assumed in the literature, all the above mentioned works assume $H$	continuous in $(y,z)$, or even locally Lipschitz, and that the terminal condition is bounded, or has exponential moments.
To the best of our knowledge, the only papers dealing with existence of quadratic BSDEs with generators dominated by $f(y)|z|^2$ (with $f$ not constant) are the papers of \citet{beo2017}, \citet{epstein03} and \citet{Bahlali_domi}.
In the works of \citet{beo2017} and \citet{epstein03}, the function $f$ is	assumed to be globally integrable (and even continuous in the second paper) and in \citet{Bahlali_domi}, it is assumed to be locally bounded. The case $f$ globally integrable is also considered in \citet{Bahlali_domi} with merely $\L^1$-integrable terminal value.

The case  $f(y) = \pm 1/y$ considered in the present article cannot be covered	by the above cited works since this function is not integrable, even locally.
This case turns out to be of special practical interest, and its treatment is more delicate.  
		
In the next section we state the main existence and uniqueness results for BSDEs of the form $|z|^2/y$.
The proofs are given in Section \ref{sec:proofs}.
The subsequent section \ref{sec:PDE} is devoted to applications to viscosity solutions of	singular parabolic PDEs.
We start by the case where there are no boundary conditions and then we consider singular PDEs with lateral Neumann boundary conditions.
In Section \ref{sec:decision}, we discuss applications to finance and economics.
In the appendix we prove seemingly new stability results for (forward)	stochastic differential equations with non-Lipschitz coefficients that are of independent interest.
These are necessary for the existence of viscosity solutions of PDEs with non-Lipschitz coefficients treated in Section \ref{sec:PDE}.
	
\section{Main results}
\label{sec:results}
	
Consider the following spaces and norms:
For $p>0$, we denote by \ $\L_{loc}^{p}(\R)$ the space of (classes) of functions $u$ defined on $\mathbb{R}$ which are $p$-integrable on bounded subsets of $\R$.
We also denote, $\mathcal{W} _{p,\,loc}^{2}(\R)$ the Sobolev space of (classes) of functions $u$ defined on $\mathbb{R}$ such that both $u$ and its generalized derivatives $u^{\prime }$ and $u^{\prime \prime }$ belong to ${\L}_{loc}^{p}(\mathbb{R})$.
By, $\mathcal{C}$ we denote the space of continuous and $\mathcal{F}_{t}$--adapted processes.
By $\mathcal{S}^{p}(\mathbb{R})$ we denote the space of continuous, $\mathcal{F}_{t}$--adapted processes $Y$ such that \ $\mathbb{E}\sup_{0\leq t\leq T}|Y_{t}|^{p} < \infty$, and ${\cal S}^\infty(\mathbb{R})$ the space of processes $Y \in {\cal S}^p(\mathbb{R})$ such that $\sup_{0\le t\le T}|Y_t|\in \L^\infty$. The set ${\cal S}^p_+(\mathbb{R})$ denotes the positive elements of ${\cal S}^p(\mathbb{R})$.
Let	$\mathcal{M}^{p}(\mathbb{R}^d)$ be the space of $\R^d$-valued $\mathcal{F}_{t}$--progressive processes $Z$ satisfying $\mathbb{E}\Big[\Big(\int_{0}^{T}|Z_{t}|^{2}dt\Big)^{\frac{p}{2}}\Big]<+\infty$.
By $\mathcal{L}^{2}(\mathbb{R}^d)$ we denote the space of $\mathcal{F}_{t}$--progressive processes $Z$ satisfying $\int_{0}^{T}|Z_{s}|^{2}ds<+\infty \ \mathbb{P}$\text{--a.s.} 
BMO is the space of uniformly integrable martingales $M$ satisfying
$$\sup_\tau||\E\big[|M_T-M_\tau|\mid{\cal F}_\tau\big]||_{\infty}<\infty$$
where the supremum is taken over all stopping times $\tau$ with values in $[0,T]$.
A process $Y$ is said to belong to the class (D) if the set $\{Y_\tau: \tau \text{ stopping time in } [0,T]\}$ is uniformly integrable. 
\begin{definition}
	\label{SolutionL2} Given $\xi \in \L^0$ and a progressively-measurable function $H:[0,T]\times \Omega\times \mathbb{R}\times \mathbb{R}^d\to \mathbb{R}$, we denote by BSDE$(\xi , H)$ the BSDE with terminal condition $\xi$ and generator $H$. A solution to BSDE$(\xi, H)$ is a process $(Y,Z)$ which belongs to $\mathcal{C}\times \mathcal{L}^{2}(\R^d)$ such that $(Y,Z)$ satisfies BSDE$(\xi , H)$ for each $t\in [ 0,T]$ and \ $\int_0^T |H(s, Y_s, Z_s)|ds < \infty$ \ $\P$--a.s.
\end{definition} 
Our first main result gives existence of BSDEs having generators with growth of	the form $|z|^2/y$.
Let $\delta\ge0$, and $\alpha,\beta:[0,T]\times \Omega\to \R_+$ and $\gamma :[0,T]\times \Omega\to \R^d$  be progressively measurable processes. 
Throughout the paper, the function $g$ is given by 
\begin{equation}
\label{eq:def g}
	g(t, \omega,y, z) := \alpha_t(\omega) + \beta_t(\omega)y + \gamma_t(\omega)z + \frac{\delta}{y}|z|^2.
\end{equation} 	

Consider the following conditions:
\begin{enumerate}[label = (\textsc{A1}), leftmargin = 30pt]
	\item $\alpha \in {\cal S}^2_+(\mathbb{R})$,  $\xi>0$ and there is $p>1$ such that
	$$\E\left[\xi^{(2\delta + 1)p}e^{p\int_0^T\lambda_u\,du }\right]<\infty\quad
		\text{and}\quad \E\Big[\Big(\int_0^Te^{\frac 12
			\int_0^s\lambda_u\,du}\alpha_s\,ds\Big)^p \Big]<\infty,$$
	with $\lambda_t:=(2\delta + 1)(\alpha_t+\beta_t) + \frac{|\gamma_t|^2}{2r}$ for	some $r\in (0,\frac{1\wedge(p-1)}{2})$,  $e^{\int_0^T\gamma_s\,dW_s}\in \L^q$, $\beta \in {\cal S}^q_+(\mathbb{R})$ and $|\gamma|\in {\cal	S}^{2q}(\mathbb{R})$, with $1/p + 1/q=1$. 
	\label{a2}
\end{enumerate}
\begin{enumerate}[label = (\textsc{A2}), leftmargin = 30pt]
	\item $\alpha \in {\cal S}^2_+(\mathbb{R})$, $\xi>0$ and there is $p>1$ such that
	$$e^{\int_0^T(\alpha_u + \beta_u)\,du}\xi\in \L^{(2\delta + 1)p}\quad
		\text{and}\quad  \int_0^Te^{\int_0^s(\alpha_u+\beta_u)\,du }\alpha_s\,ds \in
		\L^p$$
	$e^{\int_0^T\gamma_s\,dW_s}\in \L^q$, $\beta \in {\cal S}^q_+(\mathbb{R})$ and $|\gamma|\in {\cal S}^{2q}(\mathbb{R})$, with $1/p + 1/q=1$.\label{a3} 
\end{enumerate}
\begin{enumerate}[label = (\textsc{A1}'), leftmargin = 30pt]
	\item $\xi<0$ and satisfies the integrability conditions stated in \ref{a2}	along with $\alpha, \beta$ and $\gamma$. \label{a2prime}
\end{enumerate}
\begin{enumerate}[label = (\textsc{A2}'), leftmargin = 30pt]
	\item $\xi<0$ and satisfies the integrability conditions stated in \ref{a3}	along with $\alpha, \beta$ and $\gamma$. \label{a3prime}
\end{enumerate} 

It is clear that the integrability condition \ref{a2} is stronger than \ref{a3}.
Our main existence result is the following:
\begin{theorem}
\label{thm:exist_BSDE}
	If $\xi>0$, $H$ satisfies $0\le H\le g$ and condition \ref{a2} holds, then the BSDE$(\xi, H)$ admits a solution $(Y,Z) \in {\cal S}^{(2\delta +1)p}(\R)\times \mathcal{L}^2(\R^d)$ such that $0< Y\le Y^g$, where $(Y^g, Z^g)$ solves BSDE$(\xi, g)$.
		
	If condition \ref{a2} is replaced by \ref{a3}, then the solution $(Y,Z)$ satisfies $\sup_{t\in[0,T]}\E[|Y_t|^{2\delta +
			1}]<\infty$ and $Z \in {\cal L}^2(\R^d)$.
\end{theorem}
The proof is given in Subsection \ref{sec:exists} below.
We will also show that under slightly stronger conditions, BSDEs with generators driven by $|z|^2/y$ further admit unique solutions.
The precise statement is given in the following theorem:
\begin{theorem}
\label{thm:unique_BSDE}
	Assume that $\xi\in \L^\infty$, $\alpha, \beta \in {\cal S}^\infty(\mathbb{R})$, that $\int\gamma\,dW$ is a BMO martingale and that for each $(t,\omega)$, the function $H(t,\omega,\cdot,\cdot)$ is jointly convex.
	If $\xi>0$ and $H$ satisfies $0\le H\le g$, then for every solution $(Y, Z)$ of BSDE$(\xi, H)$ such that $0< Y\le Y^g$, the process $Y$ is bounded and moreover, if either $\delta<1/2$ or $\xi>c$ for some constant $c>0$, then $Z\in \mathcal{M}^2(\R^d)$.

	In addition, for every solutions $(Y,Z), (Y',Z')$ satisfying $0 < Y,Y' \le Y^g$,  the processes
		$Y$ and $Y'$ are indistinguishable and $Z = Z' dt\otimes \P$--a.s.
\end{theorem}
The sign conditions made on $H$ and $\xi$ are not necessary.
The following corollary can be treated using the method developed in the present article. 
	
\begin{corollary}
\label{cor:cor.main.exit}
	If $\xi<0$, $H$ satisfies $g\le H\le 0$ with $2\delta +1$ odd,  and the condition \ref{a2prime} is satisfied, then the BSDE$(\xi, H)$ admits a solution $(Y,Z)\in \mathcal{S}^{(2\delta +1)p}(\R)\times \mathcal{L}^{2}(\R^d)$ such that $Y^g\le Y<0$.
	If in addition $\xi\in \L^\infty$, $\alpha, \beta \in {\cal	S}^\infty(\mathbb{R})$, that $\int\gamma\,dW$ is a BMO martingale and that for each $(t,\omega)$, the function $H(t,\omega,\cdot,\cdot)$ is jointly concave	then BSDE$(\xi,H)$ admits a unique solution $(Y,Z)$ such that $Y^{g}\le Y<0$.
\end{corollary} 

The following third main result deals with BSDEs driven by $-\delta z^2/y$. This kind of BSDEs has direct applications to mathematical finance.
In particular, it arises when studying the existence of Kreps-Porteus utility in continuous time.
This problem was considered by Duffie-Lions \cite{Duf-Lions} using partial differential equations. 
Since the treatment of this kind of BSDEs is different from that of equations driven by $\delta z^2/y$, it is discussed separately. The following statements hold.  Let
\begin{equation}
\label{eq:def f}
	f(t, \omega,y, z) :=  \beta_t(\omega)y + \gamma_t(\omega)z - \frac{\delta}{y}|z|^2, 
\end{equation} 
and consider the integrability condition
\begin{enumerate}[label = (\textsc{A1}''), leftmargin = 30pt]
	\item  $\xi$, $\beta$ and $\gamma$ satisfy
	\begin{equation*}
	 	\exp\Big(\int_0^T|-2\delta + 1||\beta_s| -\frac12|\gamma_s|^2\,ds + \int_0^T\gamma_s\,dW_s  \Big)\xi^{-2\delta + 1} \in \L^2.
	\end{equation*}  \label{a1second}
\end{enumerate} 

\begin{theorem}\label{thm:existmoinsdelta} 
	\begin{itemize}
		\item[(a)] If $\xi>0$, $H$ satisfies $0\le H\le f$ and condition \ref{a1second} is satisfied, then the BSDE$(\xi,
		H)$ admits a solution $(Y,Z)\in \mathcal{S}^{(-2\delta +1)}(\mathbb{R})\times\mathcal{M}^{2}(\mathbb{R}^d)$ such that $0< Y\le Y^{f}$ where $(Y^f, Z^f)$ solves BSDE$(\xi, f)$.
		If in addition $\xi\in \L^\infty$, $\beta \in {\cal S}^\infty(\mathbb{R})$, that $\int\gamma\,dW$ is a BMO martingale and that for
		each $(t,\omega)$, the function $H(t,\omega,\cdot,\cdot)$ is jointly convex then
		BSDE$(\xi,H)$ admits a unique solution $(Y,Z)$ such that $0<Y\le Y^{f}$.
		
		\item[(b)] If $\xi<0$, $H$ satisfies $f\le H\le  0$ with $-2\delta+1$ odd, and condition \ref{a1second} is satisfied, then the BSDE$(\xi, H)$ admits a solution $(Y,Z)\in \mathcal{S}^{(-2\delta +1)}(\mathbb{R})\times \mathcal{M}^{2}(\mathbb{R}^d)$ such that $Y^{f}\le Y<0$.
		If in addition $\xi\in \L^\infty$, $\beta \in {\cal S}^\infty(\mathbb{R})$, that $\int\gamma\,dW$ is a BMO martingale and that for each $(t,\omega)$, the function $H(t,\omega,\cdot,\cdot)$ is jointly concave then BSDE$(\xi,H)$ admits a unique solution $(Y,Z)$ such that $Y^{f}<Y\le 0$. 
	\end{itemize}	
\end{theorem}
Before going any further, let us make the following important remarks. 
\begin{remark} 		
	(i) The monotonicity condition $(y-\tilde y)(H(s,y,z) - H(s,\tilde y, z)) \le	-a|y - \tilde y|^2$ for some $a \in \mathbb{R}$ and every $(y,y',z)\in \mathbb{R}^2\times \mathbb{R}^d $ is often used to derive uniqueness of BSDEs, see e.g. \cite{dp}.
	This condition is not compatible with our framework. 
	For instance, our canonical generator  $H(y,z) = -z^2/y$  is not monotone in the above sense.
	However, it is jointly concave in $(y,z)$.  
	Therefore, corollary \ref{cor:cor.main.exit} suggests that in the present setting, convexity-type conditions in $(y,z)$ are a more suitable assumptions to obtain uniqueness than monotonicity in $y$. 
	
	(ii) BSDEs with generators of the form $-z^2/y$ are particularly relevant in economics, since they correspond to the Kreps-Porteus stochastic differential utility in continuous time given in \cite{Duf-Eps92}.  

	(iii) As the reader will see in the proofs, assuming that $a:=-2\delta+1$ or $a:=2\delta +1$ are odd when $\xi<0$ are needed to ensure that quantities of the form $x^{a}$ are well-defined for $x<0$.
\end{remark}
		
Along with the existence result, the above uniqueness theorem is crucial for the existence of viscosity solutions of a class of singular parabolic PDEs.
For instance, our results will allow to solve the PDE, for $ \ \theta=\pm1$,  
\begin{equation*}
	\begin{cases}
		\frac{\partial {v}}{\partial s}(s,x)+b(t,x)\nabla_xv(s,x) +  \frac 12\Delta	v(s,x) +  \theta \frac{|\nabla_{x}v|^{2}}{v}(s,x) = 0, \quad  \text{on}\quad [0,T)\times \mathbb{R}^{d},\\
		v(t,x) > 0  \quad \text{on}\quad [0,T)\times \mathbb{R}^{d},\\
		v(T,x)= h(x) 
	\end{cases}
\end{equation*}
when the functions $b$ and $h$ are merely continuous in $x$, with additional growth conditions, see Theorem \ref{thm:1/y viscosity} and Remark \ref{rem:no regula}.
This will be developed in Subsection \ref{sec:parab PDE} for parabolic PDEs with no boundary conditions and Subsection \ref{sec:neumann} for the case of PDEs with lateral Neumann boundaries.
In Section \ref{sec:decision} we provide two applications of the existence	results to economics, namely to (non-exponential) utility maximization with random endowment and to the existence of Kreps-Porteus stochastic differential utility in continuous time.  
This part of our work can be seen as a probabilistic approach to the result of \citet{Duf-Lions}.
Moreover, we give existence of viscosity solutions of these singular PDEs along with a probabilistic representation.

\section{Proofs of the main results}
\label{sec:proofs}
	The idea of the proof of the existence results is to first focus on existence of
	equations with generators of types $g$ and $f$.
	To solve such equations, we will use change of variables allowing to reduce these
	equations to much simpler ones, in some cases to linear equations.
	Then, we will use the so-called domination property recalled in Lemma \ref{lem:dom} to obtain existence of the equation with generator $H$.
	We start by presenting preliminary results that we will bring together to prove	the main existence theorem.
	\subsection{Preliminaries}
	The following lemma summarizes properties of a simple linear BSDE which will
	play an important role in our approach. These properties are well-known, short
	proofs are given for completeness.	
	
	\begin{lemma}\label{rk0}
		Let $\xi$ be an $\mathcal{F}_T$-measurable random variable and
		$\gamma:[0,T]\times \Omega\to \mathbb{R}^d$ a progressively measurable process.
		Consider the BSDE
		\begin{equation}\label{0}
			Y_t = \xi +\int_t^T\gamma_sZ_s\,ds- \int_t^T Z_s\,dW_s.
		\end{equation}
		\begin{itemize}
			\item[(i)] If there is $p>1$ such that $\xi\in \L^p$ and
			$\exp\big(\int_0^T\gamma_s\,dW_s\big)\in \L^q$ with $1/p+ 1/q=1$, then \eqref{0}
			has a unique solution $(Y,Z)$ such that
			\begin{align*}
				\int_0^t Z_sdW_s \, \hbox{ is a local martingale and } \,
				\sup_{0\leq s \leq T} \E(|Y_s|) < \infty.
			\end{align*}
			\item[(ii)] If in addition $\xi$ is positive, then  the solution $Y_t$ is
			positive for each $t$.
			\item[(iii)] If $\xi =0$, then $(0,0)$ is a solution of \eqref{0}.  If $\xi
			\neq 0$, then for any process $Z$, the pair $(0, Z)$ cannot be a solution of
			\eqref{0}.
			\item[(iv)] If $\gamma=0$ and there is a positive solution $Y$ to equation \eqref{0}, then necessary $\xi\in \L^1$.
			\item[(v)] If $\gamma=0$ and $\xi\in \L^1$, then Equation \eqref{0} admits a
			unique solution $(Y,Z)$ such that $Y$ belongs to the class (D) and $Z \in {\cal
				M}^p(\mathbb{R}^d)$ for each $0<p<1$.
		\end{itemize}
	\end{lemma}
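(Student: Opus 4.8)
The plan is to treat \eqref{0} as a linear BSDE and to solve it explicitly through the adjoint (discount) process. Set, for $0\le t\le s\le T$,
\[
\Gamma_{t,s} := \exp\Big(\int_t^s \gamma_u\,dW_u - \tfrac12\int_t^s|\gamma_u|^2\,du\Big),
\]
the stochastic exponential of $\int\gamma\,dW$ started at time $t$. Applying It\^o's formula to $\Gamma_{0,t}Y_t$ shows that, for any solution $(Y,Z)$, the product $\Gamma_{0,t}Y_t$ is a local martingale with differential $\Gamma_{0,t}(Z_t+\gamma_t Y_t)\,dW_t$; this single computation drives every item below.

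For (i), I would first produce the candidate $Y_t := \E[\Gamma_{t,T}\xi\mid\mathcal F_t]$, equivalently $Y_t=\E^\Q[\xi\mid\mathcal F_t]$ after the Girsanov change of measure with density $\Gamma_{0,T}$, under which $W-\int_0^{\cdot}\gamma\,du$ is a Brownian motion. The integrability hypotheses enter exactly here: since $\Gamma_{0,T}\le\exp(\int_0^T\gamma\,dW)\in L^q$ and $\xi\in L^p$ with $1/p+1/q=1$, H\"older's inequality gives $\E[\Gamma_{0,T}|\xi|]<\infty$, so $\xi\in L^1(\Q)$ and $Y$ is well defined. I would then recover $Z$ from the martingale representation of the $\P$-martingale $M_t:=\E[\Gamma_{0,T}\xi\mid\mathcal F_t]$ (writing $M_t=M_0+\int_0^t\phi\,dW$ and setting $Z_t := \Gamma_{0,t}^{-1}\phi_t-\gamma_t Y_t$), check that $Y_T=\xi$ and that $(Y,Z)$ solves \eqref{0}, and observe that $\int Z\,dW$ is a local martingale because $Z\in\mathcal L^2$. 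Uniqueness follows by applying the same transform to the difference of two solutions: $\Gamma_{0,\cdot}(Y-Y')$ is a local martingale vanishing at $T$, which under the stated integrability is a true martingale, forcing $Y=Y'$ and $Z=Z'$. I expect the uniform estimate $\sup_{0\le s\le T}\E|Y_s|<\infty$ to be the one genuinely delicate point, since controlling $\E[\Gamma_{s,T}|\xi|]$ uniformly in $s$ requires combining the $L^p$--$L^q$ bound with the supermartingale structure of $\Gamma$ rather than a one-line estimate; this is the main obstacle.

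Items (ii)--(iv) are then short. For (ii), positivity of $Y_t=\E[\Gamma_{t,T}\xi\mid\mathcal F_t]$ is immediate from $\Gamma_{t,T}>0$ and $\xi>0$ a.s. For (iii), the equation evaluated at $t=T$ forces $Y_T=\xi$ for every solution, so $(0,0)$ solves \eqref{0} when $\xi=0$, while $\xi\ne0$ rules out any pair $(0,Z)$. For (iv) with $\gamma=0$, \eqref{0} reduces to $Y_t=Y_0+\int_0^t Z\,dW$, a nonnegative local martingale and hence a supermartingale; since $\mathcal F_0$ is $\P$-trivial, $Y_0$ is a finite constant and $\E[\xi]=\E[Y_T]\le Y_0<\infty$, giving $\xi\in L^1$.

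Finally, for (v) with $\gamma=0$ and $\xi\in L^1$, I would take $Y_t:=\E[\xi\mid\mathcal F_t]$, a uniformly integrable martingale, hence of class (D), and define $Z$ by its martingale representation, which yields a solution. The bound $Z\in\mathcal M^p$ for $0<p<1$ comes from the Burkholder--Davis--Gundy inequality, $\E[(\int_0^T|Z|^2\,ds)^{p/2}]\le C_p\,\E[(\sup_t|Y_t|)^p]$, together with the fact that the maximal function of an $L^1$-martingale lies in $L^p$ for every $p<1$ (the weak-$(1,1)$ maximal inequality combined with Kolmogorov's lemma; Doob fails only at $p=1$). Uniqueness is the cleanest part: the difference of two class-(D) solutions is a class-(D) local martingale with zero terminal value, hence a genuine uniformly integrable martingale equal to its terminal value $0$, so $Y=Y'$ and then $Z=Z'$ $\P\otimes dt$-a.e.
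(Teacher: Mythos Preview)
Your proposal is correct and follows essentially the same route as the paper: both pass to the equivalent measure $\Q^\gamma$ with density $\Gamma_{0,T}$, identify $Y_t=\E_{\Q^\gamma}[\xi\mid\mathcal F_t]$ via martingale representation, deduce (ii)--(iii) from this formula, and handle (iv) by the nonnegative-local-martingale/supermartingale argument with Fatou's lemma. The paper dispatches the uniform bound $\sup_t \E|Y_t|<\infty$ in one line ``due to H\"older's inequality,'' so your instinct that this step hides a small amount of work is accurate; for (v) the paper simply cites an external reference, whereas you give a self-contained argument via BDG and the weak-$(1,1)$ maximal inequality, which is a mild improvement in exposition but not a different idea.
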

	Henceforth, denote by  $\Q^q$ the probability measure with density
	\begin{equation*}
		\frac{d\mathbb{Q}^q}{d\P}:=\exp\Big(\int_0^Tq_s\,dW_s -
		\frac{1}{2}\int_0^T|q_s|^2\,ds \Big)
	\end{equation*}
	where $q:[0,T]\times \Omega\to\mathbb{R}$ is a progressively measurable process
	such that $\int_0^T|q_u|^2\,du<\infty$.
	\begin{proof}
		$(i)$ Since the probability measure $\Q^\gamma$ is equivalent to $\P$, the existence
		in (i) follows by an extension of martingale representation theorem sometimes
		called Dudley's representation theorem, see e.g.  \cite[Theorem 12.1]{MSteel01}.
		Moreover, $\xi \in \L^1(\mathbb{Q}^\gamma)$ and it holds
		\begin{equation}
			\label{eq:Y linear eq}
			Y_t = \E_{\mathbb{\Q^\gamma}}[\xi \mid {\cal F}_t].
		\end{equation}
		This shows in particular, due to H\"older's inequality, that $\sup_{0\le t\le
			T}\E[|Y_t|]<\infty$.
		That $\int Z\,dW$ is a local martingale follows by Girsanov's theorem.
		
		$(ii)$ If $\xi>0$, then by \eqref{eq:Y linear eq} and the fact that
		$\mathbb{Q}^\gamma$ is equivalent to $\P$ we have $Y>0$.
		The latter argument further shows that if $\xi = 0$, then $Y =0$ and $Z=0$, and
		if $\xi \neq 0$ we must have $Y\neq 0$, which proves $(iii)$.
		
		$(iv)$ Now, assume $\gamma = 0$.
		If $(Y,Z)$ is a solution such that $Y>0$, then $\xi>0$.
		Let $\tau_n$ be a localizing sequence such that
		$\int_0^{\tau_n\wedge\cdot}Z\,dW$ is a martingale.
		Then, it holds $Y_0 = \E[Y_{\tau_n}]$ and by Fatou's lemma and continuity of
		$Y$, this implies $\E[\xi]\le Y_0<\infty$. 

		$(v)$Reciprocally, if $\xi \in \L^1$, the
		proof goes as  in \cite{Bahlali_domi}. 
		Finally, assertion $(v)$ follows by \cite[Proposition 1.1]{Bahlali_domi}.
	\end{proof}
	
	Another tool in our arguments is the so-called existence by domination result which we present below, see \cite{Bahlali_domi} for details.
	\begin{definition}\label{defdom} (Domination conditions) We say that  the data
		$(\xi, H)$  satisfy a domination condition if  there exist two progressively
		measurable processes $H_1$ and  $H_2$ and two $\mathcal{F}_T$-measurable random
		variables $\xi_1$ and $\xi_2$ satisfying
		\begin{itemize}
			\item[(B1)] $\xi_1 \leq  \xi \leq \xi_2$
			\item[(B2)] BSDE$(\xi_1, H_1)$ and BSDE$(\xi_2, H_2)$ have two solutions
			$(Y^1,Z^1)$ and $(Y^2,Z^2)$ respectively, such that:
			\begin{itemize}
				\item[(i)] $Y^1 \leq Y^2$,
				\item[(ii)] for every $(t,\omega)$,  $y\in [Y_t^1(\omega), \ Y_t^2(\omega)]$
				and $z\in \mathbb{R}^d$, it holds $H_1(t,y,z) \leq  H(t,y,z) \leq H_2(t,y,z)$
				and
				$
				|H(t, \omega, y, z )| \leq \eta_t(\omega)+ C_t(\omega)|z|^2.
				$
			\end{itemize}
		\end{itemize}
		where  $C$ and $\eta$ are $\mathcal{F}_t$-adapted processes such that $C$ is
		continuous and $\eta$ satisfies for each $\omega$, \ 
		$\int_0^T|\eta_s(\omega)|ds < \infty$.
	\end{definition}
	In what follows we denote by $\mathcal{C}(\mathbb{R})$ the space of adapted
	processes with (almost sure) continuous paths.
	\begin{lemma}\label{lem:dom}(Existence by domination)
		Let $H$ be continuous in $(y,z)$ for $a.e. \ (t,\omega)$. Assume moreover that
		$(\xi, H)$  satisfy the domination conditions (B1)--(B2). Then, BSDE$(\xi, H)$
		has at least one solution $(Y,Z)\in {\cal C}(\R)\times {\cal L}^2(\R^d)$ such
		that
		$Y^1 \leq Y \leq Y^2$. Moreover, among  all solutions which lie  between 
		$Y^1$ and  $ Y^2$, there exist a maximal and a minimal solution.
	\end{lemma}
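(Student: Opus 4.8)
The plan is to obtain the solution as a monotone limit of solutions to regularized BSDEs, keeping it trapped between the two barriers $Y^1$ and $Y^2$, by combining the continuous-coefficient approximation of Lepeltier--San Martin with the monotone stability method of Kobylanski. The first reduction is to a globally controlled generator: writing $\Pi_t(\omega,y) := (y\vee Y^1_t(\omega))\wedge Y^2_t(\omega)$ for the projection onto the random band $[Y^1_t,Y^2_t]$, I set $\tilde H(t,\omega,y,z):= H(t,\omega,\Pi_t(\omega,y),z)$. By condition (ii) of Definition \ref{defdom}, $\tilde H$ is continuous in $(y,z)$, satisfies $|\tilde H(t,\omega,y,z)|\le \eta_t(\omega)+C_t(\omega)|z|^2$ for \emph{all} $y\in\R$, and coincides with $H$ on the band. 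Consequently any solution $(Y,Z)$ of BSDE$(\xi,\tilde H)$ with $Y^1\le Y\le Y^2$ is automatically a solution of BSDE$(\xi,H)$, so it suffices to produce such a solution.

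Next I would regularize $\tilde H$. Since the growth in $z$ is quadratic rather than linear, one first truncates the $|z|^2$-term and then applies the sup/inf-convolution regularization in $(y,z)$, producing Lipschitz generators $H^n$ of at most linear growth that increase (resp. decrease) to $\tilde H$. For each $n$, the Lipschitz theory of Pardoux--Peng gives a solution $(Y^n,Z^n)$. Because $H_1\le\tilde H\le H_2$ on the band while $(Y^1,Z^1)$ and $(Y^2,Z^2)$ solve the barrier equations, the comparison theorem for Lipschitz generators yields $Y^1\le Y^n\le Y^2$ and monotonicity of the sequence $(Y^n)_n$.

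The analytic heart of the argument is the passage to the limit. Localizing by stopping times on which the continuous process $C$, the barriers and $\int_0^\cdot\eta_s\,ds$ are bounded --- which is licit under the weak integrability in Definition \ref{defdom} --- and applying It\^o's formula to an exponential transform $e^{2\kappa Y^n}$ that absorbs the term $C|Z^n|^2$, one derives a pathwise (after localization) bound on $\int_0^T|Z^n_s|^2\,ds$. Along the monotone sequence this gives $Y^n\to Y$ and $Z^n\rightharpoonup Z$ in $\mathcal{L}^2$. The difficulty, inherent to quadratic generators, is that weak convergence of $Z^n$ is not enough to pass to the limit in the nonlinear term $\int_0^TH^n(s,Y^n_s,Z^n_s)\,ds$. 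I would resolve this exactly as in Kobylanski's monotone stability theorem: monotonicity and uniform boundedness of $(Y^n)$, together with the $Z$-estimates, force strong convergence $Z^n\to Z$ in $\mathcal{L}^2$ along a subsequence, which in turn yields convergence of the quadratic term and identifies $(Y,Z)$ as a solution of BSDE$(\xi,\tilde H)$ lying in $[Y^1,Y^2]$, hence of BSDE$(\xi,H)$. This strong-convergence step is the main obstacle.

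It remains to extract maximal and minimal solutions among all solutions in $[Y^1,Y^2]$. Running the decreasing approximation $H^n\downarrow\tilde H$ produces, by the limiting argument above, a solution $\bar Y$; if $(Y,Z)$ is any solution in the band, then $\tilde H(t,Y_t,Z_t)\le H^n(t,Y_t,Z_t)$ shows that $Y$ is a subsolution of the $n$-th (Lipschitz) equation, so comparison gives $Y\le Y^n$ and hence $Y\le\bar Y$ in the limit; thus $\bar Y$ is maximal. The increasing approximation $H^n\uparrow\tilde H$ yields the minimal solution symmetrically. This reproduces the construction of \cite{Bahlali_domi}.
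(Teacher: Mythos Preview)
Your approach is genuinely different from the paper's. The paper does \emph{not} approximate and pass to the limit; it quotes a result on doubly reflected quadratic BSDEs (Essaky--Hassani \cite{EH2013,Ess-Has11}): one solves the BSDE with generator $H$ reflected between the obstacles $Y^1$ and $Y^2$, and then observes that, because $H_1\le H\le H_2$ on the band and $(Y^i,Z^i)$ solve the extremal equations, the increasing processes from the two reflections are identically zero, so the reflected solution is already a genuine solution of BSDE$(\xi,H)$ with $Y^1\le Y\le Y^2$. The paper stresses that ``neither a priori estimates nor approximations are needed'' and that the Essaky--Hassani input holds \emph{without any integrability} on $\xi$; this is the key feature the domination framework is designed to exploit.

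Your monotone-approximation route has a real gap at the comparison step. With a single monotone regularization, say $H^n\downarrow\tilde H$, you can argue $H_1\le\tilde H\le H^n$ on the band and deduce $Y^1\le Y^n$; but for the other barrier you would need $H^n(\cdot,Y^2,Z^2)\le H_2(\cdot,Y^2,Z^2)$, and $H^n\ge\tilde H$ gives exactly the wrong inequality. The symmetric problem arises for the increasing approximation. So the claim ``comparison yields $Y^1\le Y^n\le Y^2$'' is not justified, and without confinement to the band the whole scheme (the quadratic estimate via $e^{2\kappa Y^n}$, the Kobylanski stability, even the identification $\tilde H=H$ on the solution) breaks down. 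A second difficulty is integrability: the domination conditions only give $\int_0^T|\eta_s|\,ds<\infty$ a.s., a random continuous $C$, and barriers that are merely in $\mathcal C$, so neither the Pardoux--Peng $L^2$ existence for the Lipschitz $H^n$ nor the standard monotone stability applies without a much more careful localization than you sketch. These are precisely the obstacles the reflected-BSDE argument in \cite{Bahlali_domi} is built to avoid.
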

	
	This lemma, whose proof can be found in \cite{Bahlali_domi}, is an intermediate
	value-type theorem.  It directly gives the existence of solutions. Neither a
	priori estimates nor approximations are needed.
	The idea of the proof consists in deriving the existence of solutions for the
	BSDE without reflection from solutions of a suitable quadratic BSDE with two
	reflecting barriers obtained by \cite[Theorem 3.2]{EH2013}, see also
	\cite{Ess-Has11}. Note that the result \cite[Theorem 3.2]{EH2013} is established
	without assuming any integrability conditions on the terminal value.

	\subsection{Existence}
	\label{sec:exists}
	
	This section is dedicated to the proof of Theorem \ref{thm:exist_BSDE}.
	We start by giving the argument for the proof of the first part of Proposition
	\ref{aperitif}, which follows as a particular case of the following result.
	The second part of Proposition \ref{aperitif} will be covered by the proof of
	Proposition \ref{prop:-absury}.

%%%%%%%%%%%%%%%%%%%%%%%%%%%%%%%%%%%%%%%%%%%%%%%
\subsubsection{The BSDE $\delta |z|^2/y$} 
%%%%%%%%%%%%%%%%%%%%%%%%%%%%%%%%%%%%%%%%%%%%%%%%%%%
	
\begin{proposition}
\label{pro:aperitif}
	Let $\delta \ge0$, and let $\xi$ be a random variable such that $\E[|\xi|^{2\delta  + 1}]<\infty$, with $\xi>0$ $\P$--a.s., put $H(t,y,z)=\delta|z|^2/y$.
	Then, the BSDE \eqref{SolutionL2} has a solution $(Y,Z)$ such that $\sup_{0\le t\le T}E[|Y_t|^{2\delta + 1}]<\infty$ and $Z \in {\cal L}^2$. 	
	The uniqueness holds among solutions $(Y,Z)$ such that $Y^{2\delta + 1}$ is in the class $(D)$.

	Moreover, for all $\delta\ge0$ the solutions have the integrability property $\E[\sup_{0\le t\le T}|Y_t|^2] + \E[\int_0^T|Z_t|^2\,dt]<\infty$. 
		
	If $2\delta + 1$ is an odd number, then the above results remain true when $\xi<0$ $\P$-a.s.
\end{proposition}
	
\begin{proof}
	The function
	\begin{equation}\label{u(Y-1}
		u(y) := \frac1{2\delta + 1} y^{2\delta + 1}
	\end{equation}
	is a  twice continuously differentiable function which is one to one from $\R$ onto $\R$. Moreover, its inverse $v:= u^{-1}$ is also twice continuously differentiable from $\R^*_+$ to $\R_+$ (and from $\mathbb{R}^*_-$ to $\mathbb{R}_-$ when $2\delta + 1$ is odd).  
	Therefore, Itô's formula shows that BSDE$(\xi , \frac\delta y|z|^2)$ has a solution  if and only if BSDE$(\frac1{2\delta + 1} \xi^{2\delta + 1}, 0)$ has a strictly positive (or a strictly negative) solution. 
	According to Dudley's representation theorem, see e.g. \cite[Theorem 12.1]{MSteel01} BSDE$(\frac1{2\delta + 1} \xi^{2\delta + 1}, 0)$ has a solution for any $\mathcal{F}_T$-measurable random variable $\xi $ (no integrability is needed for $\xi$). 
	But, in order to apply It\^o's formula to the function $u^{-1}(x) = ((2\delta + 1)x)^{\frac1{2\delta +1}}$, we need that BSDE$(\frac1{2\delta + 1} \xi^{2\delta +1}, 0)$ has a strictly positive (or strictly negative) solution. 
	This holds when $\xi$ belongs to $\L^{2\delta + 1}$ and,  $\xi > 0$ (or  $\xi < 0$ and $2\delta + 1$ is odd). In this case, according to Lemma \ref{rk0},
	BSDE$(\frac1{2\delta  + 1} \xi^{2\delta + 1}, 0)$ has a unique solution  $(\bar{Y}, \bar{Z})$ := $(\frac1{2\delta  + 1}Y^{2\delta + 1},\ Y^{2\delta}Z)$ such that $\bar{Y}$ belongs to class $(D)$ and $\bar{Z}$ belongs to $\mathcal{M}^p(\mathbb{R}^d)$ for each $0<p<1$.
	Putting $Y:= ((2\delta + 1)\bar Y)^{\frac 1{2\delta + 1}}$, there is $Z$ such	that $(Y,Z)$ solves BSDE$(\xi,  \frac \delta y |z|^2)$ and, for some constant $K\ge0$, we have
	\begin{equation}
	\label{eq:class D}
		\sup_{0\leq \tau \leq T}\E[|(Y_\tau)^{2\delta + 1}|] = \frac1{2\delta +1}\sup_{0\leq \tau \leq T}\E[|\bar{Y}_\tau|] \  \le K
	\end{equation}
	where the supremum is taken over all stopping times $\tau\leq T$.

	Let us now prove uniqueness.
	Assume $\xi>0$ and let $(\bar Y,\bar Z), (\tilde Y, \tilde Z)\in {\cal S}^2(\mathbb{R})\times {\cal M}^2(\mathbb{R}^d)$ be two strictly positive solutions of BSDE$(\xi, \frac{\delta}{y}|z|^2)$.
	Applying It\^o's formula to $u(Y_t)$ shows that there are progressively measurable processes $\tilde Z'$ and $\bar Z'$ such that $(\tilde Y', \tilde Z')$ and $(\bar Y', \bar Z')$ solve BSDE$(\frac 1{2\delta + 1}\xi^{2\delta + 1},0)$, with $\tilde Y_t' =  u(\tilde Y_t)$ and $\bar Y_t' = u(\bar Y_t)$. Since $\tilde Y'$ and $\bar Y'$ are of class (D), it follows by Lemma \ref{rk0} that $\tilde Y' = \bar Y'$ and $\tilde Z' = \bar Z'$.
	The case $\xi <0$ is proved analogously.
		
	To prove the integrability property we distinguish two cases.

	\emph{Case 1:} We first assume that $\delta \neq 1/2$.
	We shall show that $(Y,Z)$ belongs to $\mathcal{S}^{2}(\mathbb{R})\times\mathcal{M}^{2}(\mathbb{R}^d)$. Itô's formula gives
	\begin{equation*}
		|Y_t|^2 = |\xi|^2 + \int_t^T(2\delta - 1)|Z_s|^2\,ds -  2\int_t^T Y_s Z_s\,dW_s.
	\end{equation*}
	For $n>0$, let $\tau_n := \inf\{t\geq 0; \int_t^T|Y_s Z_s|^2ds \geq n\}\wedge T$.
	It holds $\E[|Y_0|^2] = \E[\xi^2] + (2\delta-1)\E \int_0^{T\wedge\tau_n}|Z_s|^2\,ds$.
	Since $\tau_n \to T$ as $n\to \infty$, we deduce that it holds
	\begin{equation}\label{bornezcarrey-1}
		\E \int_0^T|Z_s|^2ds \leq\frac{1}{|2\delta -1|} \E[\xi^2] + \E[|Y_0|^2].
	\end{equation}
	We now prove that $Y$ belongs to $\mathcal{S}^{2}(\mathbb{R})$. Using Itô's formula and Doob's inequality, it follows that there exists a universal constant $\ell$ such that for any $\varepsilon > 0$
	\begin{align*}
		\E\Big[\sup_{0\leq t \leq T}|Y_t|^2 \Big] & \leq \E[\xi^2] + (2\delta + 1)\E\int_0^T|Z_s|^2ds + 2\E\Big[\sup_{0\leq t \leq T}|\int_t^T Y_s Z_sdW_s|\Big]\\
			& \leq \E[\xi^2] + (2\delta + 1)\E\int_0^T|Z_s|^2\,ds + \frac{\ell}{\varepsilon}\E\Big[\sup_{0\leq t \leq T}|Y_t|^2\Big] + \varepsilon
			\E\int_0^T|Z_s|^2ds.
	\end{align*}
	Taking $\varepsilon = 4\ell$, we deduce that
	\begin{align*}
		\E\Big[\sup_{0\leq t \leq T}|Y_t|^2 \Big] \leq 2\E[\xi^2] + 2(4\ell + 2\delta+1)\E\int_0^T|Z_s|^2ds
	\end{align*}
	which shows that $Y$ belongs to $\mathcal{S}^{2}(\mathbb{R})$.

	\emph{Case 2:} We now assume $\delta = 1/2$.
	We have showed that there exists a solution $(Y,Z)$ such that $\sup_{0\le t\le T}E[|Y_t|^{2}]<\infty$,  $Z \in {\cal L}^2$ and $Y>0$. 
	By Itô's formula, we have 
	\begin{equation*}
		Y_t^2 = \xi^2 - 2\int_t^T Y_s Z_sdW_s.
	\end{equation*}
	That is, the process $(\bar Y, \bar Z) := (Y^2, 2YZ)$ is a solution to the BSDE  
	\begin{equation}\label{Ybar1}
		\bar Y_t = \bar\xi -  2\int_t^T \bar Z_sdW_s, \qquad t\in [0, \ T].
	\end{equation}
 	Since $\E(|\bar\xi|) < \infty$, then according to  \cite{beh2015} we have  \ $\E[\sup_{0\leq s \leq T} |\bar Y|] < \infty$ and hence, \ $\E[\sup_{0\leq s \leq T} | Y|^{2}] < \infty$.  
 	Let us now prove that $Z \in \mathcal{M}^2(\R^d)$.	
 	Since $Y > 0$, then Itô's formula applied to $Y^2 \ln( Y)$ gives 
	\begin{align*}\label{itoYlogY}
		Y_t^2\ln( Y_t) & = \xi^2 \ln( \xi) 
		- \int_t^T|Z_s|^2\,ds  - \int_t^T (2Y_s\ln(Y_s) + Y_s)Z_s \ dW_s
	\end{align*} 
	For an integer $n$, we put 
	\begin{equation}\label{taun}
		\tau_n = \inf\Big\{ s > 0: \, \ \int_0^s |Y_rZ_r|^2+ |Y_r \ln (Y_r) Z_r|^2dr \ \geq \ n\Big\}.
	\end{equation}  
	It is easily checked that $\tau_n$ tends to $\infty$ as $n$ tends to $\infty$.
	Since $Y$ is continuous and $Z$ belongs to $\mathcal{L}^2$, then by a suitable localization one can assume that the stochastic integral in formula \eqref{itoYlogY} is a uniformly integrable martingale. Therefore, 
	\begin{align}\notag
		\E\Big[\int_0^T|Z_s|^2 \, ds\Big] & \leq \E\Big[\sup_{0\leq T}|Y_t^2\ln( Y_t)|\Big]  +  \E[|\xi^2 \ln( \xi)|] \\  \label{Zcarre}
 		& \leq K\Big(1  +  \E[\sup_{0\leq T}|Y_t|^{2p}] +  \E[|\xi|^{2p}]\Big) \ < \ \infty 
	\end{align}
	for some positive constant  $K$
	Proposition \ref{aperitif} is proved.
\end{proof}  
	\subsubsection{The BSDE$(\xi,g )$ with $g(t,\omega, y,z):= \alpha_t(\omega) +
		\beta_t(\omega)y + \gamma_t(\omega)z +  \frac{\delta}{y}|z|^2$ }	
%%%%%%%%%%%%%%%%%%%%%%%%%%%%%%%%%%%%%%%%%%%%%%%%%%%
	
In this subsection the quadratic BSDE under consideration is
\begin{equation}\label{abunsury}
	Y_{t}=\xi +\int_{t}^{T}g(s, Y_s, Z_s)ds-\int_{t}^{T}Z_{s}dW_{s},\, \ \ 0\leq t\leq T
\end{equation} 
where $g$ is defined by \eqref{eq:def g}
	
\begin{proposition}\label{propabsury}
	Let $\delta\ge0$ be fixed. 
	The following statements hold:
	\begin{itemize}
		\item[(a)] If $\xi>0$, $g\ge 0$ on $[0,\ T]\times\mathbb{R}_+\times\mathbb{R}^d$ and
		condition \ref{a2} holds, then the BSDE \eqref{abunsury} has a solution  $(Y,Z)$ such that $Y \in \mathcal{S}^{(2\delta +1)p}(\R)$, $Y>0$ and $Z\in {\cal L}^2(\R^d)$. 
		
		If moreover, $\xi \in \L^{2p+\lambda}$ for some $\lambda>0$, then for every $\delta\geq 0$ BSDE$(\xi, g)$ has a solution $(Y,Z)\in {\cal S}^{2p+\lambda}(\R)\times{\cal M}^2(\mathbb{R}^d)$.
			
		\item[(b)] If $\xi<0$, $g\le 0$ on $[0,\ T]\times\mathbb{R}_-\times\mathbb{R}^d$ with $2\delta + 1$ being an odd number and condition \ref{a2prime} holds, then the BSDE \eqref{abunsury} has a solution $(Y,Z)$ such that $Y \in \mathcal{S}^{(2\delta +1)p}(\R)$, $Y<0$ and $Z\in {\cal L}^2$.
		If $\delta\neq 1/2$, then we have $Z\in {\cal M}^2(\mathbb{R}^d)$. 
	\end{itemize}
\end{proposition} 
	
\begin{proof} 
	We consider only the case  $\xi > 0$, i.e. (a). 
	The second case goes  similarly.
	The BSDE \eqref{abunsury} admits a solution if and only if the BSDE
	\begin{equation}
	\label{eq:bsde_firstchange}
		\bar Y_t = \bar\xi + \int_t^T \left(\alpha_s\big\{(2\delta +1)\bar Y_s\big\}^{\frac{2\delta}{2\delta+1}} + (2\delta + 1)\beta_s\bar Y_s + \gamma_s\bar Z_s\right) \,ds - \int_t^T\bar Z_s\,dW_s
	\end{equation}
	also does, where $\bar \xi:=\frac 1{2\delta + 1} \xi^{2\delta + 1}$.
	This follows from It\^o's formula applied to $\frac{1}{2\delta + 1}Y^{2\delta + 1}$ and $\{(2\delta + 1)\bar Y\}^{1/(2\delta + 1)}$.
	Note that in order to apply It\^o's formula to $\{(2\delta + 1)\bar	Y\}^{1/(2\delta + 1)}$, we need that $\bar Y$ be strictly positive (or strictly negative, in which case $2\delta + 1$ should be an odd integer).
	We will show below that this holds when $\xi>0$ (or $\xi<0$).
		
	In order to prove existence of a solution to \eqref{eq:bsde_firstchange}, we	will apply Lemma \ref{lem:dom} with $\xi_1 = \xi_2 = \bar{\xi}$, $H_1(t,z)=\gamma_tz$, $ H(t, y,z) = \alpha_s\big\{ (2\delta +1)y \big\}^{\frac{2\delta}{2\delta+1}} + (2\delta + 1)\beta_ty + \gamma_tz$ and $H_2(t, y, z) = (2\delta + 1)\big( \alpha_t + (\alpha_t + \beta_t)y \big) + \gamma_t z$.
	Since $\xi^{2\delta + 1} \in \L^p$, and $\exp\big(\int_0^T\gamma_t\,dW_t\big)\in \L^q$, we clearly have
	\begin{itemize}
		\item	$Y^{\prime}_t = \mathbb{E}_{\Q^\gamma}\Big[\bar{\xi}\mid \mathcal{F}_t\Big]$ is a solution of BSDE$(\bar\xi,H_1)$ and $Y'>0$,	
		\item $H_1(t,z) \le H(t,y,z)\le H_2(t,y,z)$ for every  $y \geq 0$ and every $z\in \mathbb{R}^d$.
	\end{itemize}
	By \cite[Theorem 2.1]{beh2015} the BSDE$(\xi_2,H_2)$ admits a unique solution
	$(Y^{\prime\prime}, Z^{\prime\prime})\in {\cal S}^p(\mathbb{R})\times{\cal M}^p(\mathbb{R}^d)$.
	Since $\alpha$ and $\beta$ are positive, we have $Y^{\prime\prime}\ge Y^{\prime}$ and since $\bar\xi>0$ (which is equivalent to $\xi>0$), we further have $Y^{\prime}>0$.
	It follows from Lemma \ref{lem:dom} that Equation \eqref{eq:bsde_firstchange} admits a solution $(\bar Y, \bar Z)$ such that $Y^{\prime}\le \bar Y\le Y^{\prime\prime}$.
	In particular, $\bar Y \in {\cal S}^p(\mathbb{R}) $.
	Consequently, Equation \eqref{abunsury} admits a solution $(Y,Z)$ with $Z \in	{\cal L}^2(\mathbb{R}^d)$, where $\E\sup_{0\leq t \leq T}|Y_t|^{(2\delta +1)p} = (2\delta  +1)\E\sup_{0\leq t \leq T}|\bar{Y}_t|^p < \infty$ and $Y>0$.
		
	Let us now show the integrability property.
	As in the proof of Proposition \ref{abunsury}, we distinguish two cases:

	\emph{Case 1:}
	Assume $\delta \neq 1/2$. Let us show that in this case $Z \in {\cal M}^2(\mathbb{R}^d)$.
	Since $(Y,Z)$ satisfies Equation \eqref{abunsury} and $Y>0$, applying It\^o's formula to $Y^2$   yields
	\begin{equation*}
		Y^2_t = \xi^2 + \int_t^T2\alpha_sY_s + 2 \beta_sY^2_s + 2Y_s\gamma_sZ_s + (2\delta -1)Z^2_s\,ds - 2\int_t^TY_sZ_s\,dW_s.
	\end{equation*}
	Let $n \in \mathbb{N}^*$ and consider the stopping time
	\begin{equation*}
		\tau_n:= \inf\Big\{t>0: \int_0^t2|Y_s|^2|Z_s|^2\,ds>n \Big\}\wedge T.
	\end{equation*}
	Then, there is a constant $C\ge0$ such that for every $n \in \mathbb{N}$ and every $\varepsilon>0$ we have
	\begin{align*}
		\E\Big[\int_0^{\tau_n}|Z_s|^2\,ds \Big] &\le \frac{1}{|2\delta - 1|}Y^2_0+	\frac{1}{|2\delta - 1|}\E\Big[Y^2_{\tau_n} + \int_0^{\tau_n}2\alpha_uY_u + 2\beta_uY^2_u + 2Y_u\gamma_uZ_u\,du \Big]\\
		& \le CY^2_0+ C\E\Big[Y^2_{\tau_n} + \int_0^{\tau_n}\alpha_u^2 + Y_u^2 +	\frac 2q \beta_u^q + \frac 2p Y^{2p}_u + \frac {1}{p\varepsilon}Y_u^2p +
		\frac{\varepsilon}{2}|Z_u|^2+ \frac{1}{q\varepsilon}|\gamma_u|^{2p}\,du \Big]
	\end{align*}
	where the inequality follows by H\"older's inequality.
	This shows that there is a constant $C>0$ such that
	\begin{equation*}
		\E\Big[\int_0^{\tau_n}|Z_s|^2\,ds \Big] \le C\Big( 1 + \sup_{t\in[0,T]}\E[Y^{2p}_t]+ \E\Big[Y^2_{\tau_n}+ \int_0^T\alpha^2_u + \beta^q_u + |\gamma_u|^{2q}\,du \Big] \Big).
	\end{equation*}
	Taking the limit in $n$ on both sides, it follows by continuity of $Y$ and Lebesgue dominated convergence theorems that 
	\begin{equation*}
		\E\Big[\int_0^{T}|Z_s|^2\,ds \Big] \le C\Big( 1 + \sup_{t\in [0,T]}\E[Y^{2p}_t]+ \E\Big[\xi^2+ \int_0^T\alpha^2_u + \beta^q_u + |\gamma_u|^{2q}\,du \Big] \Big)<\infty.
	\end{equation*}
	That is, $Z\in {\cal M}^2(\mathbb{R}^d)$.

	\emph{Case 2:} Assume $\delta=1/2$.
	According to the first part of the proof, Proposition \ref{propabsury} (a), BSDE$(\xi,g)$ has a solution $(Y,Z)$ such that $Y>0$ and  satisfies $\E[\sup_{0\leq t \leq T}|Y_t|^{2p+\lambda}]<\infty$ for some $\lambda>0$.
	We shall prove that $Z \in {\cal M}^2(\mathbb{R}^d)$.  
	Since $Y > 0$, then Itô's formula applied to $Y^2\ln( Y)$ gives 
	\begin{align*}\label{itoYlogY}
		Y_t^2\ln(Y_t) = \xi^2 \ln(\xi) + \int_t^T (\alpha_s + \beta_sY_s  + \gamma_sZ_s)(2Y_s\ln(Y_s) + Y_s) ds- \int_t^T|Z_s|^2ds  - \int_t^T (2Y_s\ln(Y_s) +Y_s)Z_s \ dW_s.
	\end{align*} 
	Since $Y$ is continuous and $Z$ belongs to $\mathcal{L}^2(\R^d)$, then by a suitable localization one can and assume that the stochastic integral in the previous formula is a uniformly integrable martingale. Therefore, 
	\begin{align*}\label{itoYlogY}
	\E\Big[\int_0^T|Z_s|^2\,ds\Big] 
		& \leq \E|Y_0^2\ln( Y_0)|) + \E[|\xi^2 \ln( \xi)|]\\ 
		& \quad \ + \E\Big[\int_0^T 2\alpha_sY_slnY_s +\alpha_sY_s + \beta_sY_s^2\ln(Y_s) + \beta_sY_s^2  +2\gamma_sZ_sY_s\ln(Y_s)  + \gamma_sZ_sY_s \,ds\Big].
	\end{align*} 
	Arguing as in the proof of Proposition \ref{propabsury} and using the fact that  $|y\ln(y)| \leq K(1+ |y|^{1+\varepsilon})$ for any $\varepsilon >0$ and any $y$, we show that for any $\varepsilon > 0$,
	\begin{equation*}
		\E\Big[\int_0^T|Z_s|^2 \,ds\Big] \leq 2\E \Big[\sup_{0\leq T}|Y_t^2\ln( Y_t)|\Big] + K\E\bigg[\int_0^T \Big(1 + \alpha_s^2 +  \beta_s^{q} + \frac{1}{\varepsilon}|\gamma_s|^{2q} + |Y_s|^{2p(1+\varepsilon)} \Big)\,ds\bigg]   
	\end{equation*} 
	where  $K$ is some positive constant which depends on $p$ and $q$.  
	To complete the proof, it suffices to take  $\varepsilon = \frac{\lambda}{2p}$. 
	This concludes the proof.
\end{proof}

%%%%%%%%%%%%%%%%%%%%%%%% delta = 1/2 %%%%%%%%%%%%%%%%%%%%%% 

\begin{remark}\label{rkpropabsury}  
	$(i)$ \ If in assertion $(a)$ of the previous proposition, \ref{a2} is	replaced by the weaker condition \ref{a3} and $\delta\neq1/2$, then the solution $(Y,Z)$ satisfies the integrability   $\sup_{t\in[0,T]}E[|Y_t|^{2\delta + 1}]<\infty$ and $Z \in {\cal M}^2(\mathbb{R}^d)$. 
		
	$(ii)$ \ If in assertion $(b)$ of the previous proposition, \ref{a2prime} is replaced by \ref{a3prime} and $\delta\neq1/2$, then the solution $(Y,Z)$
		satisfies the integrability \ $\sup_{t\in[0,T]}E[|Y_t|^{2\delta + 1}]<\infty$ and $Z \in {\cal M}^2(\mathbb{R}^d)$.
\end{remark}
\begin{proof}
	In fact, in case the integrability condition \ref{a2} is replaced by the condition \ref{a3}, we notice that the BSDE$(\xi_2, H_2)$ introduced in the proof of Proposition \ref{propabsury} admits a solution $(Y'', Z'')$ if and only if the equation
	\begin{equation}
	\label{eq:BSDE_secondchange}
		\widetilde Y_t = e^{(2\delta + 1)\int_0^T(\alpha_u + \beta_u)\,du}\bar\xi +	\int_t^T(2\delta + 1)e^{(2\delta + 1)\int_0^s(\alpha_u + \beta_u)\,du}\alpha_s + \gamma_s\widetilde Z_s\,ds - \int_t^T\widetilde Z_s\,dW_s
	\end{equation}
	also does.
	This follows again as application of It\^o's formula to the processes $Y''_t =e^{-(2\delta + 1)\int_0^t(\alpha_u + \beta_u)\,du}\widetilde Y_t$ and $\widetilde Y_t = e^{(2\delta + 1)\int_0^t(\alpha_u + \beta_u)\,du} Y''_t$.
	Moreover, since there is $p>1$ such that it holds $e^{(2\delta + 1)\int_0^T(\alpha_u + \beta_u)\,du}\xi^{2\delta + 1} \in \L^p$ and $\exp\big(\frac 12 \int_0^T\gamma_t\,dW_t\big)\in \L^q$, it follows that Equation \eqref{eq:BSDE_secondchange} admits a solution $(\widetilde Y, \widetilde Z)$ which is given by
	\begin{equation*}
		\widetilde Y_t = \E_{\mathbb{Q}^\gamma}\Big[e^{(2\delta + 1)\int_0^T(\alpha_u + \beta_u)\,du}\bar\xi + \int_t^T(2\delta + 1)e^{(2\delta + 1)\int_0^s(\alpha_u+ \beta_u)\,du}\alpha_s\,ds \mid {\cal F}_t\Big].
	\end{equation*}
	Now, using H\"older's inequality we get $\sup_{0\le t\le T}\E[\widetilde Y_t]<\infty$, and by Girsanov's theorem it holds $\widetilde Z \in {\cal L}^2(\mathbb{R}^d)$.
	Thus, since $\alpha$ and $\beta$ are positive, BSDE$(\xi_2, H_2)$ admits a	solution $(Y'', Z'')$ satisfying $Y''_t \ge Y'_t$.
	By $\bar{\xi}>0$ (which is equivalent to $\xi > 0$), we further have that $Y''>0$.
	Hence, $0\le Y''\le \widetilde Y$, showing that $\sup_{0\leq t \leq T}\E[|Y''_t|^{(2\delta + 1)}] < \infty$ and arguing as above we get $\sup_{0\leq t \leq T}\E[|Y_t|^{(2\delta + 1)}] < \infty$ and $Z \in {\cal M}^2(\mathbb{R}^d)$. The case $\xi<0$, i.e. (b) is proved similarly. Remark \ref{rkpropabsury} is proved. 
\end{proof}
	
%%%%%%%%%%%%%%%%%%%%%%%%%%%%%%%%%%%%%%%%%%%%%%%%%%%%%

\subsubsection{The BSDE$(\xi,f)$ with $f(t,\omega, y,z):= \beta_t(\omega)y + \gamma_t(\omega)z -  \frac{\delta}{y}|z|^2$ }	
%%%%%%%%%%%%%%%%%%%%%%%%%%%%%%%%%%%%%%%%%%%%%%%%%%%%
	
Let us proceed with the following result and existence for the BSDE$(\xi,f)$ which is analogous to Proposition \ref{propabsury}. 
Its proof uses Itô's formula and the domination argument Lemma \ref{lem:dom} as	in the proof of Proposition \ref{propabsury}.
The difficulty comes with the negative scaling $-\delta$.  
Note that this case is related to the Kreps-Porteus model. 
	
\begin{proposition}\label{prop:-absury}
	Let $\delta\ge0$ be fixed and such that $\delta \neq 1/2$ and assume that $\xi, \beta$ and $\gamma$ satisfy the integrability condition \ref{a1second}.
	The following statements hold:
	\begin{itemize}
		\item[(a)] If $\xi>0$, then the BSDE$(\xi,f)$ has a solution  $(Y, Z)$ such that 
		$\sup_{t \in [0,T]}\E_{\Q}[|Y_t|^{-2\delta + 1}]<\infty$, $Y>0$, and $Z\in {\cal L}^2(\mathbb{R}^d)$ where $\Q$ is the probability measure with density 
		\begin{equation}\label{qgamma}
			d\Q=\mathcal{E}\Big(\int\gamma\,dW\Big)\,d\P.
		\end{equation}
		\item[(b)] If $\xi<0$, with $-2\delta + 1$ being an odd number, then the BSDE$(\xi, f)$ has a solution $(Y,Z)$ such that\\
		$\sup_{t \in [0,T]}\E_{\Q}[|Y_t|^{-2\delta + 1}]<\infty$, $Y<0$, and $Z \in \mathcal{L}^2(\R^d)$.
	\end{itemize} 
\end{proposition} 
	\begin{proof}[of Propositions \ref{aperitif}-(b) and  \ref{prop:-absury}]
	We will prove only \emph{(a)}, since the proof of \emph{(b)} is the same (we additionally assume $-2\delta+1$ odd to ensure that $\xi^{-2\delta +1}$ is well-defined for $\xi< 0$).
	Here again, we would like to apply Itô's formula to $\frac{1}{-2\delta + 1}Y^{-2\delta + 1}$ and to its inverse $\{(-2\delta + 1)\bar Y\}^{1/(-2\delta + 1)}$ to argue that the BSDE$(\xi, f)$ admits a solution if and only if the BSDE
	\begin{equation}
	\label{eq:linear.-delta}
		\bar Y_t = \frac{1}{-2\delta  + 1}\xi^{-2\delta + 1}+ \int_t^T (-2\delta + 1)\beta_s\bar Y_s + \gamma_s\bar Z_s\,ds - \int_t^T\bar Z_s\,dW_s
	\end{equation}
	also admits a solution denoted $(\bar Y, \bar Z)$.
	And by the integrability condition on $\xi$, it follows by \cite[Theorem 2.1]{beh2015} that
	$(\bar Y, \bar Z)\in {\cal S}^1(\mathbb{R})\times{\cal M}^2(\mathbb{R}^d)$, showing that $Y\in {\cal S}^{-2\delta +1}(\mathbb{R})$. 
	However, here there are more than one cases since the number $\{-2\delta + 1\}^{1/(-2\delta + 1)}$ is not always defined. 

	\vskip 0.15cm 	
	\textit{Case 1. $-2\delta + 1 > 0$.} In this case, $\{-2\delta + 1\}^{1/(-2\delta + 1)}$ is a number and the linear equation \eqref{eq:linear.-delta} is well-posed, thus the result follows by It\^o's formula as explained above.	
	%%%%%%%%%%%%%%%%%%%%%%%%%%%%%%%%%%%%%%%%%%%%%%%%%%%
	%%%%%%%%%   delta entier    
	%%%%%%%%%%%%%%%%%%%%%%%%%%%%%%%%%%%%%%%%%%%%%%%%%%%
	
	\vskip 0.15cm 	
	\textit{Case 2.  $\delta \in \N^* $.} 
	This situation can be treated as in
	\textit{Case 1}, since the number $\{(-2\delta + 1)\}^{1/(-2\delta + 1)}$ is again well defined. 
	In particular,  we obtain the existence in part (b) of Proposition \ref{aperitif} by taking $\delta =1$.
	To finish the proof of Proposition \ref{aperitif}(b), note that the proof of
	uniqueness in the class $(D)$ goes as that of
	\cite[Proposition 2.2]{Bahlali_domi}. 
	Assume that $\xi$ is square integrable. 
	Given a solution $(Y,Z)$ of \eqref{eq:z2/y.intro},
	Itô's formula applied to $Y^2$ shows that 
	\begin{equation}\label{y2}
		Y_t^2 = \xi^2 -2\int_t^T |Z_s|^2\,ds - \int_t^T Y_sZ_sdW_s.
	\end{equation} 
	Using Burkholder-Davis-Gundy's inequality, one obtains that
	\begin{equation}\label{supy2}
		\frac12\E\big[\sup_{s\leq T}|Y_s|^2\big] + \frac32 \E\Big[\int_0^T|Z_s|^2\,ds\Big] \ \leq \
		\E\big[|\xi|^2\big]	.
	\end{equation} 
	To prove uniqueness, let $(Y,Z)$ and $(Y',Z')$ be two solutions in
	$\mathcal{S}^2(\mathbb{R})\times\mathcal{M}^2(\mathbb{R}^d)$.
	Itô's formula applied to $|Y-Y'|^2$ shows that  
	\begin{equation}\label{y-y'2}
		|Y_t-Y'_t|^2 = |\xi-\xi'|^2 -2\int_t^T |Z_s-Z'_s|^2\,ds - \int_t^T
		(Y_s-Y'_s)(Z_s-Z_s')\,dW_s. 
	\end{equation}
	The uniqueness follows by using Burkholder-Davis-Gundy's inequality once again.

	%%%%%%%%%%%%%%%%%%%%%%%%%%%%%%%%%%%%%%%%%%%%%%%%%%%%
	
	\vskip 0.15cm 	
	\textit{Case 3. $1/2 < \delta < 1 $.} We will use Lemma \ref{lem:dom}. 
	Let $f_1(t,y,z):= \beta_ty + \gamma_tz- |z|^2/y$, $f_2(t,y,z):= \beta_ty + \gamma_tz$ and consider the three equations BSDE$(\xi,f_1)$, BSDE$(\xi,f)$ and BSDE$(\xi,f_2)$.
	Notice that $f_1\le f\le f_2$.
	The equations BSDE$(\xi,f_1)$ and BSDE$(\xi,f_2)$ admit solutions respectively denoted $(Y_1,Z_1)$ and $(Y_2, Z_2)$. 
	Observe that since $(Y_2,Z_2)$ solves a linear equation and that $Y_2$ is explicitly given by $Y_{2,t} = \E_\Q[e^{\int_t^T\beta_u\,du}\xi\mid \mathcal{F}_t]$. 
	Since $-Y_1^{-1}$ solves the linear equation \eqref{eq:linear.-delta} (with $\delta=1$), it follows that
	\begin{equation*}
	 	Y_{1,t} = \E_\Q\Big[e^{-\int_t^T\beta_u\,du}\xi^{-1}\mid \mathcal{F}_t\Big]^{-1}>0
	 \end{equation*} 
	 so that by Jensen's inequality we have $0<Y_1\le Y_2$ $\P$-a.s., by equivalence of the probability measures $\Q$ and $\P$. 
	Thus, by Lemma \ref{lem:dom}, BSDE$(\xi, f)$ has a solution $(Y,Z)$ such that $Y_1\le Y\le Y_2$.
	By the integrability condition \ref{a1second}, this shows that $\sup_t\E_{\Q}[|Y_t|^{-2\delta + 1}]<\infty$. 

	%%%%%%%%%%%%%%%%%%%%%%%%%%%%%%%%%%%%%%%%%%%%% 
	
	\vskip 0.15cm 	
	\textit{Case 4.  $\delta \geq 1 $.} 
	We denote by $\lfloor\delta\rfloor$
	the integer part of $\delta$. We have, for any $ \delta > 1$ and any $y>0$, 
	
	\begin{equation}\label{deltadomin}
		f_\delta:=\beta_ty+\gamma_t z-(\lfloor\delta\rfloor+1)z^2/y \leq \beta_ty+\gamma_t z-\delta z^2/y \leq \beta_ty+\gamma_t z . 
	\end{equation}
	By \emph{Case 2}, the BSDE$(\xi, f_\delta)$ admits a solution $(Y_\delta,Z_\delta)$ and the linear BSDE with generator $\beta_ty+\gamma_tz$ admits a solution $(Y_0, Z_0)$ with $Y_{0,t} = \E_\Q[e^{\int_t^T\beta_u\,du}\xi \mid \mathcal{F}_t]$.
	Since for any $\alpha > 0$, the function 
	$x \longmapsto x^{-\alpha}$ is convex on $\R_+^*$ and, as above, $Y_\delta$ satisfies
	\begin{equation*}
	 	Y_{\delta,t} = \E_\Q\Big[\exp\Big((-2\lfloor\delta\rfloor -1)\int_t^T\beta_u\,du\Big)\xi^{-2\lfloor\delta\rfloor -1} \mid \mathcal{F}_t \Big]^{\frac{1}{-2\lfloor\delta\rfloor -1}}>0,
	\end{equation*} 
	it follows  that $0 < Y_\delta\leq Y_0$ $\P$-a.s., by equivalence of the probability measures $\P$ and $\Q$.
	Using Lemma \ref{lem:dom}, we deduce that the BSDE driven by $(\xi,f)$ has a positive solution. 
\end{proof} 

% 	%%%%%%%%%%%%%%%%%%%%%%%%%%%%%%%%%%%%%%%%%%%%%%%%%
	%%%%%%%%%%%%%%%%%%%%%%%%%%%%%%%%%%%%%%%%%%%  
	%%%%%%     delta = -1/2          
	%%%%%%%%%%%%%%%%%%%%%%%%%%%%%%%%%%%%%%%%%%% 
	
\begin{remark}(Integrability)
	\label{moinsundemiline}
	Observe that we can derive stronger integrability properties for the solutions $(Y,Z)$ obtained in the previous proposition.
	In fact, we have the following two results:

	(i) \ Assume that conditions of Proposition \ref{prop:-absury} hold. 
	Then,  BSDE$(\xi,\ \beta_t y + \gamma_t z -  \frac12\frac{|z|^2}{y})$ has a solution $(Y, Z)$ which belongs to $\mathcal{S}^{2}(\R)\times\mathcal{L}^2(\R^d)$ and satisfies $Y > 0$. 
		
	(ii) \ Assume moreover that Assumption \ref{a2} is satisfied. 
	Then, $Z$ belongs to $\mathcal{M}^2(\R^d)$. 
\end{remark} 
\begin{proof} 
	According to Proposition \ref{prop:-absury},  BSDE$(\xi, \beta_t y + \gamma_t z - \frac{|z|^2}{y})$  has a solution $(Y_1,Z_1)$ such that $Y_1 > 0$. 	
	Let $f_1(t,y,z):= \beta_ty + \gamma_tz- |z|^2/y$, $f(t,y,z):= \beta_ty + \gamma_tz - \frac12 |z|^2/y$ and $f_2(t,y,z):= \beta_ty + \gamma_tz$.  Consider the three equations: BSDE$(\xi,f_1)$, BSDE$(\xi,f)$ and BSDE$(\xi,f_2)$.
	Clearly, $f_1(t,y,z)\le f(t,y,z)\le f_2(t,y,z)$ for any $y>0$ and any $t, z$.
	Arguing as in the proof of Proposition \ref{prop:-absury}-case 3, one can show that BSDE$(\xi, f)$ has a solution $(Y,Z)$ such that $0 < Y_1 \leq Y\leq Y_2$, where $(Y_1,Z_1)$ and $(Y_2, Z_2)$ are respectively the solutions of  BSDE$(\xi,f_1)$ and BSDE$(\xi,f_2)$.  
	Since $0 < Y_1 \leq Y \leq  Y_2$ and $Y_2$ belongs to $\mathcal{S}^2(\R^d)$, it follows that $Y$ belongs to $\mathcal{S}^2(\R^d)$. 
	We shall prove that $Z$ belongs to $\mathcal{M}^2(\R^d)$. Let $(Y, Z)$ be the solution we just  constructed. Itô's formula gives 
			\begin{equation*}
			Y^2_t = \xi^2 + \int_t^T \left(2\beta_s Y_s^{2} +  2Y_s\gamma_sZ_s \right) \,ds - \int_t^T |Z_s|^2\,ds   - 2\int_t^T Y_sZ_s\,dW_s.
		\end{equation*}
	Arguing as in the proof of Proposition \ref{propabsury}, we show that $Z$ belongs to $\mathcal{M}^2(\R^d)$. 
	Remark \ref{moinsundemiline} is proved.	 
\end{proof}		
	
	%%%%%%%%%%%%%%%%%%%%%%%%%%%%%%%%%%%%%%%%%%%%%%%%%%%%%	
	
\subsubsection{Proof of Theorems \ref{thm:exist_BSDE} and \ref{thm:existmoinsdelta} }
\begin{proof}[of Theorem \ref{thm:exist_BSDE}]
	The proof follows by the domination argument given in Lemma \ref{lem:dom}.
	Indeed, put $g(t, y, z) = \alpha_t + \beta_t y + \gamma_t z+  \frac{\delta}{y} |z|^2$.
	Since $\xi> 0$, by Proposition \ref{propabsury}, the BSDE$(\xi,g)$ admits a	solution $(Y^g, Z^g)$ such that $0<Y^g$.
	It follows by Lemma \ref{lem:dom} that BSDE$(\xi, H)$ admits a solution
	$(Y,Z)\in {\cal C}\times {\cal L}^2(\R^d)$ such that $0 <  Y\le Y^g$.
	In particular, this shows that if $\ref{a2}$ is satisfied, then $Y \in {\cal S}^{(2\delta  + 1)p}(\R^d)$ and if \ref{a3} is satisfied, then $\sup_{t\in [0,T]}\E[|Y_t|^{2\delta + 1}]<\infty$.
	Since $H\ge0$ and $\xi >0$, it holds $Y>0$.
\end{proof}
	
\begin{proof}[of Theorem \ref{thm:existmoinsdelta}]
	The proof is similar to the above argument. 
	Indeed, putting $f(t, y, z) =  \beta_t y + \gamma_t z-  \frac{\delta}{y}|z|^2$, when $\xi> 0$, we have $Y^f>0$ where $(Y^f, Z^f)$ solve the BSDE$(\xi,f)$, see Proposition \ref{prop:-absury} \emph{(a)}.
	Thus, it follows by Lemma \ref{lem:dom} that BSDE$(\xi, H)$ admits a solution $(Y,Z)\in {\cal C}\times {\cal L}^2(\R^d)$ such that $0 <  Y\le Y^f$.
	In particular, this shows that if $\ref{a1second}$ is satisfied, then $Y \in {\cal S}^{(2\delta  + 1)}(\R^d)$.
	The case $\xi<0$ is the same, in view of Proposition \ref{prop:-absury} \emph{(b)}.
\end{proof}
	
\subsection{Uniqueness}
The proof of uniqueness relies on convex dual representations, the proof of	which rely on stronger integrability properties for $Y$ and $Z$ which we now establish.
\begin{proposition}
\label{prop:Y bound Z BMO}
	Assume that $\xi\in \L^\infty$, $\alpha,\beta \in {\cal S}^\infty(\mathbb{R})$, and $\int\gamma\,dW\in  \mathrm{BMO}$.
	If $\xi > 0$ and $H$ satisfies $0\le H\le g$ then every solution $(Y,Z)$ of BSDE$(\xi, H)$ such that $0< Y \leq Y^{g}$  satisfies $Y \in {\cal S}^\infty(\mathbb{R})$ and $ Z \in \mathcal{L}^2(\R^d)$.

	Moreover, we have the following integrability properties for $Z$:
	\begin{itemize}
		\item[$(i)$] If $\delta < \frac12$ and $\int\gamma \,dW$ is a BMO martingale, then $\int Z\,dW$ is a BMO martingale.
		\item[$(ii)$] if $\big(\xi \geq c > 0$ \ and \ $\E\int_{0}^{T}(\alpha_{s} + \beta_{s} + |\gamma_s|^2)ds < \infty \big)$, then the stochastic integral $\int Z\, dW$ is a BMO martingale. 
	 \end{itemize}
\end{proposition}
\begin{proof}
	It follows by assumption that $\int\gamma\,dW$ satisfies the so-called reverse
	H\"older inequality, see e.g. \cite[Theorem 3.4]{Kazamaki01}.
	Thus, there is $q>1$ such that $\exp\big(\frac 12 \int_0^T\gamma_t\,dW_t \big)	\in \L^q$.
	Let  $(Y,Z)\in {\cal S}^2(\mathbb{R})\times {\cal L}^2(\mathbb{R}^d)$ be a solution of BSDE$(\xi, H)$ such that $0\leq Y \leq Y^{g}$, where $(Y^g,Z^g)$ is solution to BSDE$(\xi,g)$ with $g(t,y,z) = \alpha_t + \beta_ty + \gamma_tz + \frac{\delta}{y}|z|^2$.
	To prove boundedness of $Y$, it suffices to show that $Y^g$ is bounded.
	As shown in the proof of Proposition \ref{propabsury}, $0<Y^g \le \{(2\delta +	1)\bar Y\}^{1/(2\delta + 1)}$ with $\bar Y$ such that $(\bar Y, \bar Z)$ satisfies \eqref{eq:bsde_firstchange} for some progressive process $\bar Z$.
	Thus, since the generator of the BSDE \eqref{eq:bsde_firstchange} is bounded from above by $H_2(t, y, z) = (2\delta + 1)\big( \alpha_t + (\alpha_t +
		\beta_t)y \big) + \gamma_t z$, we have by Girsanov's theorem that
	\begin{equation*}
		\bar Y_t\le E_{Q^\gamma}\Big[\bar\xi + (2\delta + 1)\int_t^T\alpha_u +	(\alpha_u + \beta_u)\bar Y_u\,du\mid {\cal F}_t \Big].
	\end{equation*}
	This shows by Gronwall inequality with conditional expectation (see for instance \cite{Duf-Eps92}) that $\bar Y$ is bounded, and therefore that $Y^g$ is bounded.

{
	Let us now prove that $Z$ belongs to $\mathcal{M}^2$.

	$(i)$ We start by the case $\delta <1/2$. \ 
	Consider the function $f(x):=\frac{\delta} {x}$. 
	Set
	\begin{equation}
	\label{eq.def.K}
		K(y):= \int_0^y\exp\left(-2\int_1^xf(r)\,dr \right)\,dx.
	\end{equation}
	Observe that since $\delta<1/2$, the function $K$ is well--defined.
	Then the function
	\begin{equation}
	\label{eq.def.v}
		v(y):= \int_0^yK(x)\exp\left( 2\int_1^xf(r)\,dr\right)\,dx
	\end{equation}
	is increasing, belongs to ${\cal C}^1(\R)\cap{\cal C}^2(\R^*)$ and satisfies the ordinary differential equation $ \frac 12 v''(y)-f(y) v'(y)  = \frac 12$ in $\R^*$. 
	Let $\tau$ be a $[0,T]$-valued stopping time. 
	Since $Y>0$, then It\^o's formula applied to $v(Y)$ yields 
	\begin{align}
		\nonumber
		v(Y_\tau) &= v(\xi) + \int_\tau^{T}v'(Y_u)H(u, Y_u, Z_u) - \frac 12 v''(Y_u)|Z_u|^2\,du - \int_{\tau}^{T}v'(Y_u)Z_u\,dW_u\\
		  \nonumber
		       &\le v(\xi) + \int_\tau^{T}v'(Y_u)\left(\alpha_u + \beta_uY_u  + \frac 12\gamma^2_u \right)\,du\\
		&\quad - \int_\tau^T|Z_u|^2\left(\frac 12 v''(Y_u) - f(Y_u) v'(Y_u) \right)\,du - \int_\tau^{T}v'(Y_u)Z_u\,dW_u.
		\label{eq:ode estimate}
	\end{align} 
	From \eqref{eq:ode estimate}, we have
	\begin{align*}
	 	\frac 12\int_{\tau}^{T}|Z_u|^2\,du &\le v(Y_{T})-v(Y_{T})  + \int_\tau^{T}v'(Y_u)\left(\alpha_u + \beta_uY_u + \frac 12|\gamma_u|^2 \right)\,du - \int_\tau^{T}v'(Y_u)Z_u\,dW_u\\
	 	      &\le C_1 + C_2\int_\tau^{T} \frac 12|\gamma_u|^2 \,du + \int_\tau^{T}v'(Y_u)Z_u\,dW_u
	\end{align*}
	for some constants $C_1, C_2>0$ depending only on the uniform bounds of $\alpha$, $\beta$ and $\xi$.  
	\\ 
	Since $Y$ is continuous and bounded, $v'$ is continuous and $Z$ belongs to $\mathcal{L}^2$, 
	one can assume that the previous stochastic integral is a uniformly integrable martingale. Therefore, taking conditional expectation on both sides yields
	\begin{equation*}
		\E\Big[\int_\tau^{T}|Z_u|^2\,du\mid{\cal F}_{\tau} \Big] \le C_1 + C_2\E\Big[\int_\tau^{ T} |\gamma_u|^2\,du\mid{\cal F}_\tau\Big]
	\end{equation*}
	Therefore, $\int Z\,dW$ is a BMO martingale.

	$(ii)$ Let us now consider the case $\xi \geq c > 0$. 
	We first show that $Z \in \mathcal{M}^2(\R^d)$, and then use this to derive the BMO property.
	Let $C := \sup_{0\leq s\leq T}Y_s$. 
	Since  $\xi \geq c > 0$, it follows that the solution we constructed satisfies $Y_t  \geq c > 0$ for each $t$.
	Thus, using the occupation time formula, we have 
	\begin{align*}
		\E\Big[\int_0^T|Z_s|^2ds \Big] 
		& = \E\Big[\int_0^T d\left\langle Y\right\rangle_s\Big]  = \int_c^C \E[L_T^a(Y)]\,da
	\end{align*} 
	where $L^a_s(Y)$ the local time of the process $Y$ at $a$.
 	Arguing as in the proof of \cite[Propositions 2.1 and 2.2]{beo2017}, one can show that $\E(L_T^a(Y))$ is bounded uniformly in $a$, hence concluding the proof. 
 	For completeness, we provide below the detailled proof of this fact. 
 
	 Let  $\tau_{N}:=\inf \{t>0,\ \int_{0}^{t}|Z_{s}|^{2}ds\geq N\}$,  $\bar\tau_{M}:=\inf \{t>0,\
	 \int_{0}^{t}(\alpha_{s} + \beta_{s} + |\gamma_s|^2)ds\geq M\}$ 
	 and  put  $\tau := \tau_{N}\wedge \bar\tau_{M}$. Let $a$ be a nonnegative real number such that $c\leq a\leq C$ and $L_{\cdot}^{a}(Y)$ 
	 be the local time of $Y$ at the level $a$. By Tanaka's formula, we have
	 \begin{align*}
	 	(Y_{t\wedge \tau }-a)^{-}
	 	& =(Y_{0}-a)^{-} +\int_{0}^{t\wedge \tau }\mathbf{1}_{\{Y_{s}<a
	 		\}}dY_s+\frac{1}{2}L_{t\wedge \tau }^{a}(Y)
	 	\\
	 	& =(Y_{0}-a)^{-}-\int_{0}^{t\wedge \tau }\mathbf{1}_{\{Y_{s}<a\}}H(s,Y_{s},Z_{s})\,ds+\int_{0}^{t\wedge \tau }\mathbf{1}_{\{Y_{s}<a\}}Z_{s}dW_{s}+\frac{1}{2}L_{t\wedge \tau }^{a}(Y).
	 \end{align*}
	Since the map $y\mapsto (y-a)^{-}$ is Lipschitz, it follows that:
 	\begin{equation}  
 	\label{eq:LT}
 		\frac{1}{2}L_{t\wedge \tau }^{a}(Y)\leq |Y_{t\wedge \tau}-Y_{0}| + \int_{0}^{t\wedge \tau }\mathbf{1}_{\{Y_{s}\leq a\}}(\alpha_{s} + \beta_{s} + |\gamma_s|^2)ds  + \int_{0}^{t\wedge \tau }\mathbf{1}_{\{Y_{s}\leq a\}}\frac1c |Z_s|^2ds
 		 -\int_{0}^{t\wedge \tau }\mathbf{1}_{\{Y_{s}\leq a\}}Z_{s}dW_{s}.
 	\end{equation}
 	Passing to expectation, we get
 	\begin{equation}
 		\sup_{a}\mathbb{E}\left[ L_{t\wedge \tau }^{a}(Y)\right] \leq 2C+M+N(1 + \frac1c)
 	\label{estimationtempslocal}
 	\end{equation}
 	Since $c\leq Y_{t\wedge \tau }\leq C$ for each $t$, then $\mathrm{support}(L^a_{\cdot}(Y_{\cdot \wedge \tau }))\subset [c, \  C]$. 
 	Therefore using inequality \eqref{eq:LT} and the occupation time formula, we obtain
	\begin{align*}
 		\frac{1}{2}L_{t\wedge \tau }^{a}(Y) &\leq 2C + \int_{0}^{T} (\alpha_{s} + \beta_{s} + |\gamma_s|^2)ds+ \int_{0}^{t\wedge \tau }\mathbf{1}_{\{Y_{s}\leq a\}}(1 + \frac1c)d\langle Y\rangle _{s}
 		-\int_{0}^{t\wedge \tau}\mathbf{1}_{\{Y_{s}\leq a\}}Z_{s}\,dW_{s}\\
 		& \leq 2C + \int_{0}^{T} (\alpha_{s} + \beta_{s} + |\gamma_s|^2)ds\!+\!\!\int_{c}^{a}(1 + \frac1c)L_{t\wedge\tau}^{x}(Y)\,dx\!-\!\!\int_{0}^{t\wedge \tau }\!\!\mathbf{1}_{\{Y_{s}\leq a
 		\}}Z_{s}\,dW_{s}.
 	\end{align*}
 	Taking expectation, it holds that
	\begin{align*}
 		\frac{1}{2}\mathbb{E}\left[ L_{t\wedge \tau }^{a}(Y)\right]
 		& \leq 2C + \E\int_{0}^{T} (\alpha_{s} + \beta_{s} + |\gamma_s|^2)\,ds+\int_{c}^{a}(1 + \frac1c)\mathbb{E}\left[ L_{t\wedge\tau }^{x}(Y)\right]\, dx<\infty .
 	\end{align*}
 	Thanks to inequality (\ref{estimationtempslocal}) and Gronwall's lemma we deduce that
 	\begin{align*}
 		\mathbb{E}\left[ L_{t\wedge \tau }^{a}(Y)\right]
 		& \leq 2\bigg(2C + \E\Big[\int_{0}^{T}(\alpha_{s} + \beta_{s} + |\gamma_s|^2)ds\Big]\bigg)\exp \left( 2\int_{c}^{a}(1 + \frac1c)dx\right)\\
 		& \leq \bigg(4C + 2\E\int_{0}^{T}(\alpha_{s} + \beta_{s} + |\gamma_s|^2)ds\bigg)\exp (2(1 + \frac1c)(C-c)).
 	\end{align*}
 	Passing successively to the limit on $N$ and $M$ (keeping in mind that $\tau :=\tau_{R}\wedge \tau_{N}\wedge \bar\tau_{M}$) then using Beppo--Levi's theorem, we get
 	\begin{equation*}
 		\mathbb{E}\big[ L_{t}^{a}(Y)\big] \leq\bigg(4C + 2\E\int_{0}^{T}(\alpha_{s} + \beta_{s} + |\gamma_s|^2)ds\bigg)\exp (2(1 + \frac1c)(C-c)).   \notag  \label{Gronwall4}
 	\end{equation*}
 	This concludes the argument. 
} 

 	We prove the BMO property of $\int_0^t Z_s \, dW_s$. 
	Let $(Y,Z)$ be the solution which we have just constructed and $c$ be the non-negative constant which satisfies $Y_t \geq c$ for each $t$ an $a.s$. 
	Let the functions $\bar K$ and $\bar v$ be defined as in \eqref{eq.def.K} and \eqref{eq.def.v} respectively.
	% \begin{equation*}
	% 	 \bar K(y):= \int_c^y\exp\left(-2\int_1^xf(r)\,dr \right)\,dx.
	% \end{equation*}
	% and 
	% \begin{equation*}
	% 	\bar v(y):= \int_0^y \bar K(x)\exp\left( 2\int_1^xf(r)\,dr\right)\,dx. 
	% \end{equation*} 
The function $\bar v$ belongs to ${\cal C}^1(\R)\cap{\cal C}^2(\R^*)$ and satisfies the ordinary differential equation $ \frac 12 v''(y)-\frac12 \frac{\delta}{y} v'(y)  = \frac 12$ in $\R^*$. 
Let $\tau$ be a $[0,T]$-valued stopping time.	Observe that  $\bar K(y)$ is positive for each $y \geq c$. Hence,  the derivative $\bar v'(y)$ is positive for each $y \geq c$. Since $Y_t \geq c$ for each $t$, $a.s$, then Itô's formula applied to $\bar v(Y)$ shows that	
\begin{align}
		\nonumber
		\bar v(Y_\tau) &= 	\bar v(\xi) + \int_\tau^{T}	\bar v'(Y_u)H(u, Y_u, Z_u) - \frac 12 	\bar v''(Y_u)|Z_u|^2\,du - \int_{\tau}^{T}	\bar v'(Y_u)Z_u\,dW_u\\
		\nonumber
		&\le 	\bar v(\xi) + \int_\tau^{T}	\bar v'(Y_u)\left(\alpha_u + \beta_uY_u  + \frac 12\gamma^2_u \right)\,du\\
		&\quad - \int_\tau^T|Z_u|^2\left(\frac 12 	\bar v''(Y_u) - f(Y_u) 	\bar v'(Y_u) \right)\,du - \int_\tau^{T}	\bar v'(Y_u)Z_u\,dW_u.
\label{eq:vbar}
\end{align}	
	Since $Z$ belongs to $\mathcal{M}^2(\R^d)$, $Y$ is bounded and $\bar v'$ is continuous,  it follows that the stochastic
	integral in the right hand side term of previous inequality is a uniformly integrable $\mathcal{F}_t$--martingale.	Passing to conditional expectation then arguing as in the proof of assertion (i), one can show that $\int Z\,dW$ is a BMO martingale. Proposition \ref{prop:Y bound Z BMO} is proved. 	
\end{proof}		

The next result gives a convex dual representation of the value process $Y$ of the solution $(Y,Z)$ of BSDE$(\xi, H)$.
We adapt the idea of \cite{tarpodual} to our situation.
However, we are dealing here with a solution, which does not necessarily coincide with the minimal solution studied in \cite{tarpodual}.
We denote by $H^*$ the convex conjugate of the function $H$ given by
\begin{equation}\label{Hstar}
	H^*(t,\omega, b,a) := \sup_{y,z}\Big(by + az - H(t,\omega,y,z)\Big).
\end{equation}
	
The function $H^*$ is convex and lower semicontinuous, and since $H$ is continuous, it can be checked that the function $H^*$ is progressively measurable, see e.g. \cite{tarpodual}.
Moreover, given a $\mathbb{R}^d$-valued progressively measurable process $a$ such that $\int a\,dW$ is in BMO.

\begin{proposition}
\label{prop:duality}
	Assume that $\xi\in L^\infty$, $\alpha, \beta \in {\cal S}^\infty$, that $\int\gamma \,dW$ is a BMO martingale and that for each $(t,\omega)$, the function $H(t,\omega,\cdot,\cdot)$ is jointly convex.
	Further assume that $\xi>0$ and $0\le H\le g$ holds.
	Then, every solution $(Y,Z)$ of BSDE$(\xi, H)$ such that $0<Y< Y^g$ admits the convex dual representation
	\begin{equation}
	\label{eq:duality}
		Y_t = \esssup_{a,b}\E_{\Q^a}\bigg[e^{\int_t^Tb_s\,ds}\xi - \int_t^Te^{\int_t^ub_s\,ds}H^*(u,b_u,a_u)\,du\mid {\cal F}_t \bigg],
	\end{equation}
	where the supremum is over progressively measurable processes $a:[0,T]\times \Omega \to \mathbb{R}^d$ and $b:[0,T]\times \Omega \to \mathbb{R}$ such that $\int a\,dW$ is in BMO and $|b|\le \sup_{0\le t\le T}|\beta_t|$.
		
	We have the same representation when $\xi<0$ and \ref{a2prime} holds, for all solutions between $Y^{-g}$ and $0$.
\end{proposition}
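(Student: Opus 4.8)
The plan is to prove only the case $\xi>0$ under \ref{a1}; the case $\xi<0$ under \ref{a1prime} is entirely symmetric and I would simply indicate that at the end. The whole argument rests on the a priori bounds already secured in Proposition \ref{prop:Y bound Z BMO}: for a solution with $0<Y\le Y^g$ we have $Y\in{\cal S}^\infty(\R)$ with $Y>0$ and $\int Z\,dW\in\mathrm{BMO}$. These are exactly what make the dual objects integrable and, crucially, what turn the stochastic integrals appearing below into genuine (not merely local) martingales, so I would foreground them from the start. Throughout, write $V^{a,b}_t$ for the right-hand side of \eqref{eq:duality} evaluated at a fixed admissible pair $(a,b)$.

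For the inequality $Y_t\ge V^{a,b}_t$ I would argue as follows. Since each $H(s,\cdot,\cdot)$ is convex with conjugate $H^*$ as in \eqref{Hstar}, the Fenchel--Young inequality gives $H(s,Y_s,Z_s)\ge b_sY_s+a_sZ_s-H^*(s,b_s,a_s)$ for every admissible $(a,b)$. Substituting this into BSDE$(\xi,H)$ and changing measure to $\Q^a$ via Girsanov (so that $dW_s=dW^a_s+a_s\,ds$ with $W^a$ a $\Q^a$-Brownian motion) turns the equation into a linear-in-$Y$ supersolution relation with driver $b_sY_s-H^*(s,b_s,a_s)$. Applying It\^o's product rule to $e^{\int_t^ub_s\,ds}Y_u$, integrating from $t$ to $T$ and using $Y_T=\xi$ yields
\begin{equation*}
Y_t\ge e^{\int_t^Tb_s\,ds}\xi-\int_t^Te^{\int_t^ub_s\,ds}H^*(u,b_u,a_u)\,du-\int_t^Te^{\int_t^ub_s\,ds}Z_u\,dW^a_u.
\end{equation*}
The essential point is that the last term has zero $\Q^a$-conditional expectation: since $\int a\,dW\in\mathrm{BMO}$ and $\int Z\,dW\in\mathrm{BMO}$, Kazamaki's theorem on the stability of $\mathrm{BMO}$ under Girsanov transformations shows that $\int Z\,dW^a$ is a $\Q^a$-$\mathrm{BMO}$ martingale, and the bounded integrand $e^{\int_t^ub_s\,ds}$ (bounded because $|b|\le\sup_{0\le t\le T}|\beta_t|<\infty$) preserves this. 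Taking $\E_{\Q^a}[\,\cdot\mid{\cal F}_t]$ then gives $Y_t\ge V^{a,b}_t$, whence $Y_t\ge\esssup_{a,b}V^{a,b}_t$.

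For the reverse inequality it suffices to exhibit a single admissible pair attaining the supremum. I would take a progressively measurable selection $(b^*_s,a^*_s)\in\partial H(s,Y_s,Z_s)$ of the subdifferential (which is nonempty and selectable by a standard measurable selection argument, $H$ being jointly measurable, convex, and evaluated at $(Y_s,Z_s)$ with $Y_s>0$ in the interior of the domain). With this choice the Fenchel--Young inequality becomes the exact identity $H(s,Y_s,Z_s)=b^*_sY_s+a^*_sZ_s-H^*(s,b^*_s,a^*_s)$, so rerunning the computation above with $(b^*,a^*)$ in place of $(a,b)$ produces $Y_t=V^{a^*,b^*}_t$ --- \emph{provided} $(b^*,a^*)$ is admissible. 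The bound $|b^*|\le\sup_{0\le t\le T}|\beta_t|$ I would read off from the subgradient inequality combined with $0\le H\le g$ by analysing the behaviour of $H$ in the $y$-variable (letting $y\to+\infty$ forces $b^*_s\le\beta_s$).

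The hard part will be verifying the $\mathrm{BMO}$ property of $\int a^*\,dW$, and this is the genuine obstacle because the dominating generator $g$ in \eqref{eq:def g} carries the singular quadratic term $\frac{\delta}{y}|z|^2$. Indeed, the $z$-slope of $H$ at $(Y_s,Z_s)$ is controlled by $|\gamma_s|+\frac{\delta}{Y_s}|Z_s|$, so estimating $a^*$ requires combining the $\mathrm{BMO}$ bound on $\int Z\,dW$ from Proposition \ref{prop:Y bound Z BMO} with control of $Y$ near zero. I expect to close this exactly as in that proposition, via the It\^o--Krylov/ODE-comparison device applied to a well-chosen ${\cal W}^2_1(\R)$ function of $Y$, producing a uniform-in-$\tau$ estimate $\E[\int_\tau^T|a^*_u|^2\,du\mid{\cal F}_\tau]\le C$. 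Once admissibility is established, $H^*(u,b^*_u,a^*_u)$ is integrable and $\int e^{\int b^*}Z\,dW^{a^*}$ is a true $\Q^{a^*}$-martingale, so the conditional-expectation step goes through and delivers $Y_t=V^{a^*,b^*}_t$, completing \eqref{eq:duality}; the negative case under \ref{a1prime} follows by the same reasoning applied to $-Y$.
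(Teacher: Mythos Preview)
Your overall architecture---Fenchel--Young for the upper bound, subgradient selection $(b^*,a^*)\in\partial H(\cdot,Y_\cdot,Z_\cdot)$ for the lower bound, with the BMO machinery of Proposition \ref{prop:Y bound Z BMO} making the stochastic integrals true martingales---is the same as the paper's, and the ``$\ge$'' half is fine.

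The gap is in the admissibility of $a^*$. Your plan is to bound the $z$-subgradient of $H$ at $(Y_s,Z_s)$ by something like $|\gamma_s|+\tfrac{\delta}{Y_s}|Z_s|$ and then run an It\^o--Krylov/ODE argument to turn this into a BMO estimate. Two problems: first, the pointwise bound $0\le H\le g$ does \emph{not} control subgradients of $H$ by those of $g$ (a convex minorant can have much larger slope), so the claimed estimate on $a^*$ is unjustified; second, even granting it, you would need a BMO bound on $\int |Z_s|^2/Y_s^2\,ds$, and the hypotheses give no lower bound on $Y$ away from $0$. The ODE device in Proposition \ref{prop:Y bound Z BMO} produces $\int|Z|^2$ estimates, not $\int|Z|^2/Y^2$; pushing it to absorb a $1/x^2$ weight would force the test function $v$ to blow up near $0$, and $v(Y)$ would no longer be bounded. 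So as written this route does not close.

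The paper avoids the $1/Y$ singularity altogether by working on the dual side. From Fenchel--Young, for \emph{any} fixed $(y,z)$ one has $H^*(s,b^*_s,a^*_s)\ge b^*_s y+a^*_s\cdot z-g(s,y,z)$. Choosing the \emph{constant} $y=\delta+1$ and $z=a^*_s$ kills the singularity (since $\tfrac{\delta}{\delta+1}<1$) and yields the coercivity $H^*(s,b^*_s,a^*_s)\ge \tfrac{1}{\delta+1}|a^*_s|^2-C_s$ with $C_s$ bounded. Combine this with the subgradient identity $H^*(s,b^*_s,a^*_s)=b^*_sY_s+a^*_sZ_s-H(s,Y_s,Z_s)\le b^*_sY_s+a^*_sZ_s$ (using $H\ge0$) and a Young inequality on $a^*_sZ_s$ to get $|a^*_s|^2\le C_1|Z_s|^2+C_2$ with $C_1,C_2$ bounded; BMO of $\int a^*\,dW$ then follows from BMO of $\int Z\,dW$. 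The same ``plug in a fixed $y$'' trick with $z=0$, $y=nb^*_s$ gives the bound $|b^*|\le\|\beta\|_{{\cal S}^\infty}$ (your sketch only gave one side). The point to take away is that the quadratic growth of $H^*$ in $a$, obtained at a \emph{fixed} positive $y$, is what replaces the unavailable lower bound on $Y$.
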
 
\begin{remark}
	The integrability properties given in Proposition \ref{prop:Y bound Z BMO} remain true when $\xi<0$ and with minor changes due to replacing $\delta$ by $-\delta$, these properties hold when we have $\{\xi>0; -g\le H\le 0\}$ and $\{\xi<0; 0\le H\le -g\}$.
	Moreover, when $H$ is bounded from above (i.e. $g\le H\le 0$ or $-g\le H\le 0$), the representation given by Proposition \ref{prop:duality} remains valid if we replace the convexity assumption by concavity. 
	In this case, $esssup$ in the dual representation formula \eqref{eq:duality} is replaced by $essinf$.
\end{remark}

\begin{proof} [of Proposition \ref{prop:duality}]
	Let $(Y,Z)$ be a solution of BSDE$(\xi, H)$ such that $0< Y\le Y^{g}$.
	Let $b$ be a real-valued progressively measurable process satisfying $|b|\le \sup_{0\le t\le T}|\beta_t|$, and $a$ an $\mathbb{R}^d$-valued progressive process such that $\int a\,dW$ is in BMO.
	Applying It\^o's formula to $e^{\int_t^ub_s\,ds}\bar Y_u $, it follows by Girsanov change of measure that
	\begin{equation}
	\label{eq:first_inequ}
		Y_t = \E_{\Q^a}\bigg[ e^{\int_0^Tb_u\,du}\xi - \int_t^Te^{\int_t^ub_s\,ds}\big(b_u Y_u + a_u Z_u - H(u, Y_u, Z_u) \big)\,du\mid {\cal F}_t \bigg].
	\end{equation}
	It now follows by definition of $H^*$ and the fact that $a$ and $b$ were taken arbitrary that
	\begin{equation}
	\label{eq:first_inequality}
		Y_t \ge \esssup_{a,b}\E_{\Q^a}\bigg[e^{\int_t^Tb_u\,du}\xi - \int_t^Te^{\int_t^ub_s\,ds}H^*(u,a_u,b_u)du \mid{\cal F}_t \bigg].
	\end{equation}
	It remains to show that the above inequality is in fact an equality.
	Since $H(t,\omega,\cdot,\cdot)$ is convex, it has a nonempty subgradient at every point of the interior of its domain, see for instance \cite[Theorem 47.A]{Zeidler}.
	In particular, there is $\bar a(t,\omega)$, $\bar \beta(t,\omega)$ such that
	\begin{equation}
	\label{eq:subgrad}
		H(t,\omega, Y_t(\omega), Z_t(\omega)) = \bar b_t(\omega) Y_t(\omega) + \bar a_t(\omega)Z_t(\omega) - H^*(t,\omega, \bar b_t(\omega), \bar a_t(\omega)).
	\end{equation}
	It follows by measurable selection arguments that $\bar a$ and $\bar b$ can be chosen progressively measurable, see e.g. \cite{tarpodual} for details.
	Let us show in addition that $|\bar b|\le ||\sup_{0\le t\le T}|\beta_t|||_{\L^\infty}$.
	If $|\bar b|> ||\sup_{0\le t\le T}|\beta_t|||_{\L^\infty}$, then for all $y,z$, it holds
	\begin{equation}
	\label{eq:upHstar}
		H^*(t,\bar b_t,\bar a_t) \ge \bar a_t z + \bar b_t y - H(t,y,z)\ge \bar a_t z + \bar b_t y - \alpha_t - \beta_ty - \frac{\delta}{y}|z|^2.
	\end{equation}
	In particular, taking $z = 0$ and $y = nb_t$ yields
	\begin{equation*}
		H^*(t,\bar b_t, \bar a_t) \ge n|\bar b_t|(|\bar b_t| - \beta_t)-\alpha_t
	\end{equation*}
	which goes to infinity as $n$ goes to infinity.
	This is a contradiction, since $H$ being finite, $H^*(t,\bar b_t, \bar a_t)$ is also finite.
	Thus, $|\bar b_t|\le \sup_{0\le t\le T}|\beta_t|\in \L^\infty$.
	%%%%%%%%%%%%%%%%%%%%%%%%%%%%%%%%%%%%%%%% 
%%%%%%	bleu cyan 
	%%%%%%%%%%%%%%%%%%%%%%%%%%%%%%%%%%%%%%%%
	
	Now, let us denote by $\tau_n$ the exit time of $\int_t^\cdot|Z_u|^2\,du$ from the interval $[0,n]$, i.e.
		\begin{equation*}
			\tau_n : =\inf\Big\{s\ge t: \int_t^s|Z_u|^2\,du \ge n \Big\}\wedge T.
		\end{equation*}
		Since $Z$ is square integrable, it follows that $\tau_n\uparrow T$.
		By boundedness of $Y$, $\bar b$ and $\beta$, it follows that $\int_t^{T\wedge\tau_n}|\bar a_s|^2\, ds$ is bounded.
		Thus, putting $a^n:= a1_{[0,\tau_n]}$, it follows that $\int a^n\,dW$ is a BMO martingale and in particular $Q^{a_n}$ defines an equivalent probability measure.
		Applying It\^o's formula to $e^{\int_t^u\bar b_s\,ds}\bar Y_u $, it follows by Girsanov's change of measure that
		\begin{align*}
			Y_t &= \E_{\Q^{a^n}}\bigg[ e^{\int_t^{T\wedge \tau_n}\bar b_u\,du}Y_{T\wedge\tau_n} - \int_t^{T\wedge \tau_n}e^{\int_t^u\bar b_s\,ds}(\bar b_u Y_u + \bar a_u Z_u - H(u, Y_u, Z_u))\,du\mid {\cal F}_t \bigg]\\
			&= \E_{\Q^{a^n}}\bigg[ e^{\int_t^T b^n_u\,du} Y_{T\wedge\tau_n}  -
			\int_t^Te^{\int_t^ub^n_s\,ds} H^*(u, b^n_u,  a^n_u))\,du\mid {\cal F}_t
			\bigg].
		\end{align*}
		The second equality follows by \eqref{eq:subgrad}, with the notation $b^n:= b1_{[0,\tau_n]}$ and we used the fact that $H^*(u, b_u1_{[0,\tau_n]}, a_u1_{[0,\tau_n]}) = H^*(u, b_u, a_u)1_{[0,\tau_n]}$.
		Therefore, we have
		\begin{align*}
			Y_t 
			&\le \sup_{a, b}\E_{\Q^{a}}\bigg[ e^{\int_t^T b_u\,du}Y_{T\wedge \tau_n} -
			\int_t^Te^{\int_t^ub_s\,ds} H^*(u, b_u,  a_u)\,du\mid {\cal F}_t
			\bigg].
		\end{align*}
		Now, by continuity of $Y$, the sequence $Y_{T\wedge \tau_n}$ converges to $\xi$ $\P$-a.s. 
		We can now take the limit as $n$ goes to infinity on both sides using exactly the argument used to prove continuity in Lemma \ref{lem:continuous} below.
		Thus, obtaining 
		\begin{align*}
			Y_t &\le \sup_{a, b}\E_{\Q^{a}}\bigg[ e^{\int_t^T b_u\,du}\xi -
			\int_t^Te^{\int_t^ub_s\,ds} H^*(u, b_u,  a_u))\,du\mid {\cal F}_t
			\bigg].
		\end{align*}
		This concludes the proof.
	%%%%%%%%%%%%%%%%%%%%%%%%%%%%%%%%%%%%%%%%%%
\end{proof}
\begin{proof}[of Theorem \ref{thm:unique_BSDE}]
	The uniqueness of $Y$ follows from the representation \eqref{eq:duality} of
	Proposition \ref{prop:duality} and continuity of the paths of $Y$. 	
	We shall prove the uniqueness of $Z$.
	We call a pair $(Y', Z')$ a supersolution of BSDE$(\xi, H)$ if it satisfies $Y'_T\ge \xi$,
	\begin{equation*}
		Y'_s - \int_s^tH(u, Y'_u, Z'_u)\,du + \int_s^tZ'_u\,dW_u \ge Y'_t
	\end{equation*}
	for all $0\le s\le t\le T$ and the process $\int Z'\,dW$ is a supermartingale.
	Let $(Y,Z)\in {\cal S}^\infty(\mathbb{R})\times {\cal L}^2(\mathbb{R}^d)$ be a solution of BSDE$(\xi, H)$ such that $0< Y\le Y^g$ and $Z\in \mathcal{M}^2(\R^d)$ (by Proposition \ref{prop:Y bound Z BMO}).
	Let $(Y', Z') \in {\cal S}^\infty(\R)\times {\cal L}^2(\R^d)$ be an arbitrary supersolution.
	Let $a$ and $b$ such that $ a\in \mathcal{M}^2(\R^d)$ and $|b|\le \sup_{0\le t\le T}|\beta_t|$.
	Applying It\^o's formula to $e^{\int_t^ub_s\,ds}Y'_u$, it follows (using the arguments leading to \eqref{eq:first_inequality}) that
	\begin{equation*}
		Y'_t \ge \E_{\Q^a}\bigg[e^{\int_t^Tb_s\,ds}\xi -
			\int_t^Te^{\int_t^ub_s\,ds}H^*(s,b_s,a_s)\,ds\mid {\cal F}_t \bigg].
	\end{equation*}
	Since $a$ and $b$ were taking arbitrary, this implies
	\begin{equation*}
		Y'_t \ge \esssup_{a,b}\E_{\Q^a}\bigg[e^{\int_t^Tb_s\,ds}\xi - \int_t^Te^{\int_t^ub_s\,ds}H^*(s,b_s,a_s)\,ds\mid {\cal F}_t \bigg] = Y_t.
	\end{equation*}
	That is, $(Y, Z)$ is the minimal supersolution.
	According to \cite[Theorem 4.1]{DHK1101}, the solution $(Y,Z)$ is unique.
\end{proof}

\begin{proof}[of Corollary \ref{cor:cor.main.exit}]
	The existence part of the corollary follows by the domination argument
	Lemma \ref{lem:dom} exactly as in the proof of Theorem \ref{thm:exist_BSDE}, since we showed in Proposition \ref{prop:-absury} that BSDE$(\xi,-g)$ admits a solution.
	Uniqueness follows as in the proof of Theorem \ref{thm:unique_BSDE} since the convex dual representation remains true, with essential supremum replaced by essential infimum when $H$ is concave.
\end{proof}
	
\begin{remark}
	$(i)$ \	In the particular case of BSDE$(\xi,\frac \delta y|z|^2)$ comparison can be obtained without the above convex duality argument and boundedness of the terminal condition.
	In fact, if $\xi^1$ and $\xi^2$ are such that $\xi^1 \ge \xi^2>0$ and $(Y^i, Z^i)$ are the solutions of the BSDE$(\xi^i, \frac \delta y|z|^2)$, $i=1,2$.
	Putting $\bar Y^i = \frac{1}{2\delta + 1}(Y^i)^{2\delta + 1}$, it follows as in the proof of Proposition \ref{propabsury} that there is $\bar Z^i$ such that $(\bar Y^i, \bar Z^i)$ solves the BSDE
	\begin{equation*}
		\bar Y^i = \bar\xi^i  - \int_t^T\bar Z^i_s\,dW_s
	\end{equation*}
	with $\bar\xi^i:= \frac{1}{2 \delta + 1}(\xi^i)^{2\delta + 1}$.	
	Since $x\mapsto \frac{1}{2\delta + 1}x^{2\delta + 1}$ is increasing, it holds $\bar \xi^1 \ge \bar \xi^2>0$.
	Thus, it follows by classical comparison arguments that $\bar Y^1 \ge \bar Y^2$.
	Using that $x\mapsto \{(2\delta +1)x\}^{\frac{1}{2\delta + 1}}$ is increasing, we finally get $Y^1_t \ge Y^2_t$ for every $t \in [0,T]$.
		
	$(ii)$ Symmetrically, one can show an analogue result for BSDE$(\xi,-\frac \delta
		y|z|^2)$. 
\end{remark}
	
\section{Applications to parabolic PDEs}
\label{sec:PDE}

In this section we provide applications of our existence and uniqueness results to the study of singular parabolic PDEs.
We first consider semi-linear PDEs without boundary conditions, and then equations with lateral boundaries of Neumann type.
In this part, we assume that $\alpha$, $\beta$ and $\gamma$  are deterministic, i.e. depend only on $t$. 
We are concerned with the semilinear PDE associated to the Markovian version of	our BSDE.
	
\subsection{Probabilistic formulas for singular parabolic PDEs}
\label{sec:parab PDE}
	
Let us now turn to the application of our existence results to singular parabolic semilinear PDEs.
Let $\sigma $, ${\mu}$ be two measurable functions defined on $[0,T]\times\mathbb{R}^{d}$ with values in $\mathbb{R}^{d\times d}$ and $\mathbb{R}^{d}$ respectively.
Let $h$ be a measurable function defined on $\mathbb{R}^{d}$ with values in $\mathbb{R}$.
Define the differential operator ${\cal L}$ by
\begin{equation*}
	{\cal L}:=\sum_{i,\,j=1}^{d}(\sigma\sigma')_{ij}(s,x)\frac 12\frac{\partial^{2}}{\partial x_{i}\partial x_{j}}+\sum_{i=1}^{d}\mu_{i}(s,x)\frac{\partial }{\partial x_{i}}\, .
\end{equation*}
 We consider the followign equation:
\begin{equation} \label{edpinitiale}
	\begin{cases}
		\dfrac{\partial {v}}{\partial s}(s,\,x)+{\cal L}v(s,x)+H(s, v(s,\,x)), \sigma'\nabla_{x}v(s,\,x)) = 0,\ \text{on\ } [0,T)\times \mathbb{R}^{d},\,  \\
		v(T,x)=h (x).
		\end{cases}
\end{equation}
We will restrict ourselves to the case $\xi>0$ and $0\le H\le g$ leaving the other cases discussed in Corollary \ref{cor:cor.main.exit} to the reader, since they require only small modifications.
Consider the following conditions
\begin{enumerate}[label = (\textsc{A3}), leftmargin = 30pt]
	\item $\sigma:[0,T]\times\mathbb{R}^d\to \mathbb{R}^{d\times d}$ and ${\mu}:[0,T]\times\mathbb{R}^d\to \mathbb{R}^d$ are continuous functions and there exists $C>0$ such that for every $(t,x)\in [0,T]\times \mathbb{R}^d$ 
	\begin{equation*}
		|\sigma(t,x)| + |\mu(t,x)|\le C(1 + |x|). 
	\end{equation*}
	\label{a4}
\end{enumerate} 
		
\begin{enumerate}[label = (\textsc{A4}), leftmargin = 30pt]
	\item
	The SDE
	\begin{equation}
	\label{eq:sde}
		X_t = x + \int_0^t\mu(u,X_u)\,du + \int_0^t\sigma(u,X_u)\,dW_u
	\end{equation}
	admits a unique strong solution.
	\label{a5}
\end{enumerate}
\begin{enumerate}[label = (\textsc{A5}), leftmargin = 30pt]
	\item The terminal condition $h$ is continuous, bounded and satisfies $h>0$.
	\label{a6} 	
\end{enumerate}
\begin{theorem}
\label{viscosity}
	Assume that $\alpha, \beta$ and $\gamma$ are bounded measurable functions on $[0,T]$, that $H:[0,T]\times \mathbb{R}\times\mathbb{R}^d\to \mathbb{R}$ is jointly convex in the last two components $(y,z)$, $H$ satisfies $0\le H\le g$ and the conditions \ref{a4}-\ref{a6}
	are satisfied. Then, $v(t,x):=Y_{t}^{t,x}$ is a viscosity solution of the PDE (\ref{edpinitiale}).
\end{theorem}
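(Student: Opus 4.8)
The plan is to follow the now-classical program relating Markovian BSDEs to viscosity solutions (à la Pardoux--Peng), but with two twists dictated by our setting: the singularity of $H$ at $y=0$ and the simultaneous absence of a comparison principle and of Lipschitz regularity for $\mu,\sigma$. First I would fix $(t,x)$, let $X^{t,x}$ be the unique strong solution of \eqref{eq:sde} granted by \ref{a5}, and consider BSDE$(h(X^{t,x}_T),H)$. Since $h$ is bounded and strictly positive by \ref{a6} and $H\ge 0$ by \ref{a1}, Theorem \ref{thm:exist_BSDE} yields a solution with $0<Y^{t,x}\le Y^{g}$, while the boundedness of $\xi=h(X^{t,x}_T)$, the deterministic (hence manageable) $\alpha,\beta,\gamma$, and the convexity of $H$ supply the remaining hypotheses of Theorem \ref{thm:unique_BSDE} and thus uniqueness. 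Hence $v(t,x):=Y^{t,x}_t$ is well defined, deterministic (by the Blumenthal zero--one law), and strictly positive, so that along trajectories the process $Y$ stays away from the singularity of the generator.

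Second, I would establish that $v$ is continuous on $[0,T]\times\R^d$ and that the flow property $Y^{t,x}_s=v(s,X^{t,x}_s)$ holds for $t\le s\le T$. The flow identity follows from uniqueness of the BSDE combined with the Markov property of $X$. Continuity is the technically delicate point because $\mu,\sigma$ are merely continuous: here I would invoke the SDE stability results of the appendix to obtain $X^{t_n,x_n}\to X^{t_0,x_0}$ in a suitable sense as $(t_n,x_n)\to(t_0,x_0)$, and then transfer this to $v$ through a stability estimate for the BSDE. Since comparison is unavailable, this transfer is exactly where the convex dual representation of Proposition \ref{prop:duality} is used: it expresses $Y_t$ as a supremum of linear Girsanov-type functionals with uniformly BMO controls, a form that is stable under convergence of both the terminal data and the forward process.

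Third, with $v$ continuous and positive, I would prove the sub- and supersolution inequalities by the standard test-function argument. Take $\phi\in C^{1,2}$ such that $v-\phi$ has a local maximum $0$ at $(t_0,x_0)$; then $Y^{t_0,x_0}_s=v(s,X_s)\le\phi(s,X_s)$ up to the exit time $\tau$ of $X$ from a small space--time ball, while on this ball $Y$ takes values in a compact subset of $(0,\infty)$, bounded away from the singularity. Writing the BSDE for $Y$ on $[t_0,\tau]$ together with Itô's formula for $\phi(\cdot,X_\cdot)$, I would compare the two backward dynamics. Because $H(s,\cdot,\cdot)$ is convex I replace it by its subgradient minorant at $(Y_s,Z_s)$, exactly as in the proof of Proposition \ref{prop:duality}, producing a bounded drift and a BMO diffusion coefficient $(\bar b,\bar a)$; this linearization restores a \emph{linear} comparison via Girsanov and yields, by the usual contradiction argument, $\partial_t\phi(t_0,x_0)+\mathcal{L}\phi(t_0,x_0)+H(t_0,v(t_0,x_0),(\sigma'\nabla\phi)(t_0,x_0))\ge 0$. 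The supersolution inequality is obtained symmetrically from a local minimum, and the terminal condition $v(T,\cdot)=h$ follows from continuity of $v$ together with $X^{T,x}_T=x$.

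I expect the main obstacle to be the second step: propagating the stability of the non-Lipschitz forward SDE through the singular, non-comparable BSDE so as to obtain a genuine modulus of continuity for $v$. Positivity of $v$ keeps us uniformly away from the singularity on compacts, and the convex duality representation substitutes for the missing comparison principle, but dovetailing these with the appendix's stability estimates is the crux of the argument; once continuity and the flow property are in hand, the viscosity inequalities themselves follow the classical template, with the convexity-based linearization as the only nonstandard ingredient.
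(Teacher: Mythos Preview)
Your first two steps---well-definedness of $v$ via Theorems \ref{thm:exist_BSDE}--\ref{thm:unique_BSDE}, and continuity of $v$ via the SDE stability results of the appendix combined with the convex dual representation---coincide with the paper's setup (this is precisely Lemma \ref{lem:continuous} together with Proposition \ref{continuitytx}). The divergence is in your third step.

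The paper does \emph{not} run a direct test-function argument on the singular generator. Instead it approximates $H$ from below by the Lipschitz truncations $H^n(t,y,z):=\sup_{|a|\le n,\,|b|\le n}\bigl(by+az-H^*(t,b,a)\bigr)$, invokes the classical Pardoux--Peng result to obtain that $v^n(t,x):=Y^{t,x,n}_t$ is a viscosity solution of the PDE with nonlinearity $H^n$, and then shows via the dual representation that $v^n\uparrow v$ locally uniformly (Dini) while $H^n\uparrow H$ locally uniformly on $(0,\infty)\times\R^d$. The conclusion follows from the stability theorem for viscosity solutions. Thus the convex duality is used to prove \emph{convergence of the approximants}, not to linearize inside a test-function contradiction.

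Your route---linearize $H$ at $(Y_s,Z_s)$ via a subgradient and compare $Y$ with $\phi(\cdot,X_\cdot)$ under a Girsanov change---is a genuinely different strategy and can be made to work, but it is not as symmetric as you suggest. The subgradient inequality $H(t,y,z)\ge H(t,Y_t,Z_t)+\bar b_t(y-Y_t)+\bar a_t(z-Z_t)$ gives a \emph{lower} bound on $H$ and delivers the subsolution inequality as you describe. For the supersolution direction one needs instead the subgradient at the \emph{test-function} values $(\phi(s,X_s),\sigma'\nabla\phi(s,X_s))$, and must then verify separately that the resulting coefficients $(\tilde b,\tilde a)$ are bounded (which follows because $\phi$ and $\sigma'\nabla\phi$ are bounded and $\phi>0$ on the small ball, so the same estimates as in the proof of Proposition \ref{prop:duality} apply with constants in place of $(Y,Z)$). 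This is a fixable asymmetry, but it should be spelled out rather than dismissed as ``symmetric''. The paper's approximation-plus-stability route sidesteps this entirely by outsourcing both viscosity inequalities to the Lipschitz case.
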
 
	
\begin{remark} \ Some comments on assumption \ref{a5} are in order.
	First recall that existence of a unique strong solution means that on the probability space $(\Omega, {\cal F}, \P)$, with the Brownian motion $W$, there is a $W$-adapted process $X$ satisfying \eqref{eq:sde}  and such that $X$ is indistinguishable to any other solution on this probability basis.
		
	General conditions on the coefficients $\mu$ and $\sigma$ guaranteeing existence of a unique strong solution of \eqref{eq:sde} are well known.
	Strong existence and uniqueness hold when $\mu$ and $\sigma$ are Lipschitz continuous, but also for much rougher coefficients.
	For instance, $\ref{a4}$ already implies \ref{a5} when $\sigma$ is a constant and non-zero, see \cite{MeMo17}.
	We further refer to \cite{bah2007, Ver81, KR05,Menou-Ouk-Tan} as well as \cite[Chapter 1]{Cher-Engl05} and the references therein for other conditions.
	The point here is that we obtain existence of \eqref{edpinitiale} under much weaker regularity conditions than the standard Lipschitz continuity conditions usually assumed.
\end{remark}
The following lemma will be needed to show that the function $v$ defined above is continuous.
\begin{lemma}
\label{lem:continuous}
	Assume that $H:[0,T]\times\mathbb{R}\times\mathbb{R}^d\to \mathbb{R}$ is convex in the last two components and that \ref{a2} is satisfied.
	Let $(\xi^{t,x})$ be a bounded family of $\sigma(W_r - W_t, t\le r\le T)$-measurable and strictly positive random variables.  Assume moreover that for every sequence $(t^n,x^n)$ converging to $(t,x)$, it holds $\xi^{t^n,x^n}\to \xi^{t,x}$ $\P$-a.s.
	Then, $(t,x)\mapsto Y_{t}^{t,x}$ is continuous, where $(Y^{t,x}_s, Z^{t,x}_s)_{s\in [t,T]}$ solves BSDE$( \xi^{t,x},H)$.
\end{lemma}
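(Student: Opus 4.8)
The plan is to establish continuity of $(t,x) \mapsto Y_t^{t,x}$ by combining the convex dual representation from Proposition \ref{prop:duality} with a stability argument for the terminal data. First I would observe that since each $\xi^{t,x}$ is bounded and strictly positive, and $H$ satisfies \ref{a1} with $H$ convex, Proposition \ref{prop:Y bound Z BMO} applies: the solution $(Y^{t,x},Z^{t,x})$ satisfies $Y^{t,x}\in {\cal S}^\infty$ and $\int Z^{t,x}\,dW\in\mathrm{BMO}$, and Proposition \ref{prop:duality} gives the representation
\begin{equation*}
	Y_t^{t,x} = \esssup_{a,b}\E_{\Q^a}\left[e^{\int_t^Tb_s\,ds}\xi^{t,x} - \int_t^Te^{\int_t^ub_s\,ds}H^*(u,b_u,a_u)\,du\MMid {\cal F}_t \right],
\end{equation*}
where the supremum is over the same admissible class of $(a,b)$ for every $(t,x)$. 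The key structural point is that the dual admissible set and the penalty term $H^*$ do not depend on $(t,x)$: only the terminal condition $\xi^{t,x}$ enters through the first term, so continuity should be inherited from the assumed $\P$-a.s.\ convergence $\xi^{t^n,x^n}\to\xi^{t,x}$.

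The central step is then a stability estimate for the supremum. Given sequences $(t^n,x^n)\to(t,x)$, I would take the difference $Y^{t^n,x^n}_{t^n}-Y^{t,x}_t$ and control it by the supremum, over the admissible $(a,b)$, of the difference of the two conditional expectations. Because the penalty terms cancel in the difference of the two representations (they are identical), the bound reduces to an expression of the form $\esssup_{a,b}\E_{\Q^a}[e^{\int_t^Tb_s\,ds}|\xi^{t^n,x^n}-\xi^{t,x}|]$ plus terms accounting for the shift in the initial time from $t^n$ to $t$. Since $|b|\le\sup_t|\beta_t|$ is uniformly bounded, the exponential weight is uniformly bounded, and the measures $\Q^a$ arise from BMO martingales whose densities satisfy a reverse H\"older inequality; this yields uniform integrability of the family $\{d\Q^a/d\P\}$ over the admissible class. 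Combined with boundedness of the $\xi^{t,x}$ and the assumed a.s.\ convergence, a dominated-convergence argument (uniform in the dual controls) forces the difference to zero.

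The main obstacle will be making the stability of the essential supremum rigorous \emph{uniformly} over the dual controls, together with handling the varying initial time $t^n\to t$. Two difficulties intertwine here: first, the conditional expectation is taken at the moving time $t^n$ rather than a fixed time, so I must argue continuity in the time variable, presumably by passing to the Markovian flow and using continuity of $s\mapsto Y_s$ together with the fact that $\xi^{t,x}$ is $\sigma(W_r-W_t,\,t\le r\le T)$-measurable. Second, to interchange the limit with the $\esssup$ I cannot appeal to dominated convergence naively; the clean route is to bound $|Y^{t^n,x^n}_{t^n}-Y^{t,x}_t|$ above by $\sup_{a}\E_{\Q^a}[C|\xi^{t^n,x^n}-\xi^{t,x}|]$ (using $|\sup f-\sup g|\le\sup|f-g|$ on the dual functionals) and then show this supremum tends to zero via the reverse H\"older / uniform integrability property of the admissible densities. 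Establishing that uniform-in-$a$ convergence is where the real work lies; once it is in hand, continuity of $v(t,x)=Y^{t,x}_t$ follows immediately.
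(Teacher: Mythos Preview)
Your plan has a genuine gap at the step you yourself flag as ``where the real work lies.'' You propose to bound
\[
|Y^{t^n,x^n}_{t^n}-Y^{t,x}_t|\le \sup_{a,b}\E_{\Q^a}\!\big[C\,|\xi^{t^n,x^n}-\xi^{t,x}|\big]
\]
and then argue that the right-hand side tends to zero because the densities $d\Q^a/d\P$ are uniformly integrable over the admissible class via reverse H\"older. But the admissible class is \emph{all} $a$ with $\int a\,dW\in\mathrm{BMO}$, with no bound on the BMO norm. The reverse H\"older exponent depends on that norm, so there is no uniform $L^q$ bound (for any $q>1$) on the family $\{d\Q^a/d\P\}$, and hence no uniform integrability over the whole class. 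Consequently $\sup_a\E_{\Q^a}[\,|\xi^{t^n,x^n}-\xi^{t,x}|\,]$ need not vanish even though the integrand tends to zero $\P$-a.s.\ and is bounded. Relatedly, by ``cancelling'' the penalty term $H^*$ in the difference, you discard precisely the object that carries the compactness.

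The paper avoids this by treating the two half-continuity inequalities separately. For $\liminf_n Y^{t^n,x^n}_{t^n}\ge Y^{t,x}_t$ one fixes an arbitrary admissible $(a,b)$ and passes to the limit under a single $\Q^a$; no uniformity is needed. For $\limsup_n Y^{t^n,x^n}_{t^n}\le Y^{t,x}_t$ one selects $1/n$-optimal controls $(a^n,b^n)$ in the representation of $Y^{t^n,x^n}_{t^n}$. Near-optimality together with the lower bound $H^*(t,b,a)\ge C_1|a|^2-C_2$ (coming from the growth condition in \ref{a1}) forces
\[
\E_{\Q^{a^n}}\Big[\int_{t^n}^T|a^n_u|^2\,du\Big]\le C,
\]
i.e.\ a uniform relative-entropy bound on the densities $Z^{a^n}_{t^n,T}$. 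By de la Vall\'ee Poussin this specific sequence of densities is uniformly integrable, hence weakly $L^1$-compact, and one can then show $\E_{\Q^{a^n}}[|\xi^{t^n,x^n}-\xi^{t,x}|]\to 0$ (the paper invokes a lemma of Bou\'e--Dupuis here). Thus the penalty $H^*$ is not a nuisance to be cancelled but the source of the needed tightness; your argument would need to be restructured along these lines to close.
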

	
\begin{proof}
	Let $(t^n,x^n)\to (t,x)$, and assume without loss of generality that $t^n\downarrow t$.
	It follows by Proposition \ref{prop:duality} that for every $(t,x) \in [0,T]\times\mathbb{R}$, the solution $(Y^{t,x},Z^{t,x})$ of BSDE$(\xi^{t,x},H)$ admits the representation
	\begin{equation}
		\label{eq:duality-cont}
		Y_t^{t,x} = \esssup_{a,b}\E_{\Q^a}\bigg[e^{\int_t^Tb_s\,ds}\xi^{t,x} - \int_t^Te^{\int_t^ub_s\,ds}H^*(u,b_u,a_u)\,du\mid {\cal F}_t \bigg],
	\end{equation}
	where the supremum is over progressively measurable processes $a:[0,T]\times\Omega \to \mathbb{R}^d$ and $b:[0,T]\times \Omega \to \mathbb{R}$ such that
	$|b|\le \sup_{0\le t\le T}|\beta_t|$, and $\int a\,dW$ is in BMO.
	$Y^{t,x}_t$ is clearly deterministic, since $Y^{t,x}_s$ is measurable w.r.t. the sigma algebra $\sigma(W_r - W_t, t\le r\le s)$.
	Thus, \eqref{eq:duality-cont} takes the form
	\begin{equation}
	\label{eq:duality-cont-deter}
		Y_t^{t,x} = \sup_{a,b}\E_{\Q^a}\bigg[e^{\int_t^Tb_s\,ds}\xi^{t,x} -
		\int_t^Te^{\int_t^ub_s\,ds}H^*(u,b_u,a_u)\,du \bigg].
	\end{equation}
	By definition of $H^*$, for every $a,b$ it holds $H^*(u, b_u, a_u)\ge b_u - \beta_u - \alpha_u\ge -2||\beta||_{{\cal S}^\infty} -||\alpha||_{\infty} =:-C $ for some $C\ge0$.
	Thus by \eqref{eq:duality-cont-deter}, for every $n \in \mathbb{N}$, and every $a$, $b$, it holds
	\begin{equation*}
		Y^{t^n, x^n}_{t^n} \ge \E_{\Q^{a}}\bigg[e^{\int_{t^n}^Tb_s\,ds}\xi^{{t^n},{x^n}} - \int_{t^n}^Te^{\int_{t^n}^ub_s\,ds}(H^*(u,b_u,a_u)+C) - e^{\int_{t^n}^ub_s\,ds}C\,du \bigg].
	\end{equation*}
	Applying Beppo-L\'evy theorem and the Lebesgue   dominated convergence theorem, we get  $\liminf_{n\to \infty} Y^{t^n,x^n}_{t^n}\ge Y^{t,x}_t$.
		
	On the other hand, let $a^n, b^n$ be such that
	\begin{equation}
	\label{eq:1/n optimal Y}
		Y^{t^n,x^n}_{t^n} \le \E_{\Q^{a^n}}\bigg[e^{\int_{t^n}^Tb^n_s\,ds}\xi^{t^n,x^n} - \int_{t^n}^Te^{\int_{t^n}^ub^n_s\,ds}H^*(u,b^n_u,a^n_u)\,du \bigg] + \frac 1n.
	\end{equation}
	Since $\xi^{t^n,x^n}$ and $Y^{t^n,x^n}_t$ are bounded (see Proposition \ref{prop:Y bound Z BMO}), this implies
	$$\E_{\Q^{a^n}}\bigg[\int_{t^n}^Te^{\int_{t^n}^ub^n_s\,ds}H^*(u,b^n_u,a^n_u)\,du\bigg]\le C \quad\text{for some}\quad C\ge 0.$$
	Arguing as in the computations leading to \eqref{eq:upHstar}, and using that $\alpha, \beta$ and the sequence $b^n$ are bounded, we find two positive constants $C_1, C_2$ with $C_1>0$ such that
	\begin{equation*}
		H^*(t,b^n_t, a^n_t) \ge C_1|a^n_t|^2 + \delta b^n_t - C_2.
	\end{equation*}
	Hence, it follows that  $\E_{\Q^{a^n}}\big[\int_{t^n}^T\frac 12|a^n_u|^2\,du\big]\le C$ for some (possibly different) constant $C\ge 0$.
	This shows in particular, due to Girsanov's theorem, that $\E\big[\Gamma^{a^n}_{t^n,T}\log\big(\Gamma^{a^n}_{t^n,T} \big)\Big]\le C$, with $\Gamma^{a^n}_{t^n,T}: = \exp\big(\int_{t^n}^Ta^n_u\,dW_u - \frac12\int_{t^n}^T|a^n_u|^2\,du \big)$.
	Thus, by the criterion of de la Vall\'ee Poussin, $(\Gamma^{a^n}_{t^n,T})_n$ is uniformly integrable and therefore there exists $K \in L^1$ such that $(\Gamma^{a^n}_{t^n,T})_n$ converges weakly to $K$ (i.e. w.r.t. the topology $\sigma(L^1, L^\infty)$).
	Since the sequence $(\xi^{t^n,x^n} - \xi^{t,x})_n$ is uniformly bounded and converges to $0$ $\P$-a.s., letting $C\in \R$ be such that $e^{\int_{t^n}^Tb^n_s\,ds}\le C$, one has
	\begin{equation*}
		\limsup_{n\to \infty}\E\left[\Gamma^{a^n}_{t^n,T}e^{\int_{t^n}^Tb^n_s\,ds}|\xi^{t^n,x^n} - \xi^{t,x}| \right]\le C\lim_{n\to \infty}\E\left[\Gamma^{a^n}_{t^n,T}|\xi^{t^n,x^n} - \xi^{t,x}| \right]  = 0,
	\end{equation*}
	where the equality follows from \cite[Lemma 2.8]{Boue-Dup} (after pushing forward the measures $Q^{a^n}$ to the canonical space $C([0,T],\mathbb{R}^d)$).
	Hence, for every $\varepsilon>0$, there is $n$ large enough such that
	$$\E_{Q^{a^n}}\Big[e^{\int_{t^n}^Tb^n_s\,ds}\xi^{t^n,x^n}\Big]=
	\E\Big[Z^{a^n}_{t^n,T}e^{\int_{t^n}^Tb^n_s\,ds}\xi^{t^n,x^n} \Big] \le
	\E\Big[Z^{a^n}_{t^n,T}e^{\int_{t^n}^Tb^n_s\,ds}\xi^{t,x} \Big] + \varepsilon =
	\E_{Q^{a^n}}\left[e^{\int_{t^n}^Tb^n_s\,ds}\xi^{t,x} \right] + \varepsilon.$$
	That is, up to a subsequence, \eqref{eq:1/n optimal Y} yields
	\begin{align*}
		Y^{t^n, x^n}_{t^n} &\le \E_{\Q^{a^n}}\bigg[e^{\int_{t^n}^Tb^n_s\,ds}\xi^{t,x} - \int_{t^n}^Te^{\int_{t^n}^ub^n_s\,ds}H^*(u,b^n_u,a^n_u)\,du\bigg] + \frac 1n +
		\varepsilon\\
			&\le \sup_{a,b}\E_{\Q^a}\bigg[e^{\int_t^Tb_s\,ds}\xi^{t,x} - \int_t^Te^{\int_t^ub_s\,ds}H^*(u,b_u,a_u)\,du\mid {\cal F}_t \bigg] + \frac 1n + \varepsilon.
	\end{align*}
	Taking the limit as $n$ goes to infinity then letting $\varepsilon$ goes to $0$, we get $\limsup_{n\to\infty}Y^{t^n,x^n}_{t^n}\le Y^{t,x}_t$.
	This finishes the proof of continuity.
\end{proof}

\begin{proof}[of Theorem \ref{viscosity}]
	Let $H^*$ be the function defined by \eqref{Hstar}. For every $n\in \mathbb{N}$, the function
	\begin{equation}
	\label{eq:truncated H}
		H^n(t,y,z) := \sup_{\{|a|\le n; |b|\le n\}}\big(by + az - H^*(t, b, a)\big)
	\end{equation}
	is Lipschitz continuous (in $(y,z)$).
	Thus, by  \cite{peng01} the  BSDE driven by  $(h(X^{t,x}_T), H^n)$ admits a unique solution $(Y^{t,x,n}, Z^{t,x,n})$.
	Moreover, the function $v^n(t,x) := Y^{t,x,n}_t$ defines a viscosity solution of the PDE
	\begin{equation*}
		\begin{cases}
			\frac{\partial v^n}{\partial t}(t,x) + {\cal L}v^n(t,x) + H^n(t, v^n(t,x), \sigma'(t,x)\nabla_xv^n(t,x)) = 0, \quad \text{on } [0,T)\times \mathbb{R}^d,\\
			v^n(T,x) = h(x),
		\end{cases}
	\end{equation*}
	see e.g. \cite[Theorem 4.3]{peng01}.
	Note that the result of \cite{peng01} assumes Lipschitz continuity of $\mu$ and
	$\sigma$ in the $x$-variable to guarantee continuity of the function $v^n$.
	We emphasize that the result remains true when   $H^n$ is uniformly Lipschitz in $(y, z)$ and  $\mu$, $\sigma$, $h$ satisfies assumptions \ref{a4}-\ref{a6}. 
	Indeed, since for every $n$ it holds $v^n(t,x) := Y^{t,x,n}_t$ where $Y^{t,x,n}$ solves a BSDE with the convex generator $H^n$, continuity of $v^n$ follows by Proposition \ref{continuitytx} and Lemma \ref{lem:continuous} (applied to the generator $H^n$).
		
	We now prove that $v$ is a viscosity solution of Equation \eqref{edpinitiale}.
	Since $H$ is convex, proper and lower-semicontinuous, it follows from
	Fenchel-Moreau theorem (see e.g. \cite[Theorem 11.1, P. 474]{Rockafellar1998}) that $H^n\uparrow H$ pointwise.
	Thus, by continuity of $H$, we conclude from Dini's theorem that the sequence $(H^n)$ converges to $H$ uniformly on compacts subsets of $\mathbb{R}_+\times \mathbb{R}^d$.
	Next, we show that $(v^n)$ also converges to $v$ locally uniformly.
	Using the arguments of the proof of Proposition \ref{prop:duality} (and the fact that $Y^{t,x,n}$ is deterministic) allows to show that $Y^{t,x,n}$
	satisfies the dual representation
	\begin{equation*}
		Y^{t,x,n}_t = \sup_{b,a}\E_{\Q^a}\bigg[e^{\int_t^Tb_s\,ds}h(X^{t,x}_T) -\int_t^Te^{\int_t^ub_s\,ds}H^{n,*}(u, b_u, a_u)\,du \bigg]
	\end{equation*}
	where the supremum is taken over $\mathbb{R}^d$-valued progressively measurable processes $a$ such that $\int a\,dW$ is in BMO and $b$ a bounded adaped process, and $H^{n,*}$ is the convex conjugate of $H^n$ given by
	\begin{equation}\label{Hnstar}
		H^{n,*}(t,b,a) := \sup_{y\in \mathbb{R},z\in \mathbb{R}^d}\big(by + az -H^n(t, y,z)\big).
	\end{equation}
	Since $H^{n,*}\downarrow H^*$, it follows that $(Y^{t, x, n}_t)_n$ is increasing and $Y^{t,x,n}_t\le Y^{t,x}_t$, where $(Y^{t,x}, Z^{t,x})$ solves BSDE$(h(X^{t,x}), H)$.
	In particular, the sequence $(Y^{t, x, n}_t)_n$ converges.
	For any arbitrary admissible $(b,a)$, and every $n$, we have
	\begin{equation*}
		Y^{t,x,n}_t \ge \E_{\Q^a}\bigg[e^{\int_t^Tb_s\,ds}h(X^{t,x}_T) - \int_t^Te^{\int_t^ub_s\,ds}H^{n,*}(u, b_u, a_u)\,du \bigg]
	\end{equation*}
	so that by monotone convergence theorem and Proposition \ref{prop:duality} we get $\lim_{n\to\infty}Y^{t,x,n}_t\ge Y^{t,x}_t$.
	Thus,
	\begin{equation*}
		\lim_{n\to\infty}v^n(t,x) = \lim_{n\to\infty}Y^{t,x,n}_t = Y^{t,x}_t = v(t,x).
	\end{equation*}
	It follows again by Dini's theorem and continuity of $v$ (to get this, combine Lemma \ref{lem:continuous} and Proposition \ref{continuitytx}) that the convergence of $(v^n)$ to $v$ holds uniformly on compacts.
	By the stability theorem for viscosity solutions, see e.g. \cite[Lemma II.6.2]{Flem-Soner-second}, we conclude that $v$ is a viscosity solution of \eqref{edpinitiale}.
\end{proof}	
	
	%%%%%%%%%%%%%%%%%%%%%%%%%%%%%%%%%%%%%%%%%%%%
\subsection{The case of the canonical PDE $\theta\frac{|\nabla v|^2}{v}$ with $\theta=\pm1$}

As pointed out in Proposition \ref{aperitif}, in the special case where the generator is of the form $H(y,z):= \theta |z|^2/y$ (with $\theta=\pm1$), existence and uniqueness can be obtained under even weaker conditions than those given in Theorem \ref{viscosity}.
In the same vein, PDEs with nonlinearity of the form $\theta|\nabla v|^2/v$  can be treated with more general assumptions.
Since such equations are of particular interest in applications (see e.g. \cite{Dall-Lui-Pet}), we dedicate this subsection to their analysis.
Thus, for $\theta=\pm1$, consider the PDE 
\begin{equation} \label{edp1surY}
	\left\{
	\begin{array}{l}
		\dfrac{\partial {v}}{\partial s}(s,\,x)+\mathcal{L}v(s,x)+\theta\frac{|\sigma'\nabla_{x}v|^{2}}{v}(s,x) = 0, \qquad \ \  \text{on\ } [0,T)\times \mathbb{R}^{d},\,
			\\
			v(t,x) > 0
			\quad \text{on}\quad [0,T)\times \mathbb{R}^{d},\,
			\\
			v(T,x)= h(x).
		\end{array}
		\right.
\end{equation}
The following notions are well-known, they are recalled here for the reader's convenience.
	
\begin{definition}
	The SDE \eqref{eq:sde} admits a weak solution $(\bar X, \bar W)$ if there is a filtered probability space $(\bar \Omega,\bar{\cal F}, (\bar{\cal F}_t)_{t\in [0,T]}, \bar\P)$ on which $\bar W$ is an adapted Brownian motion and $\bar X$ is adapted and such that $(\bar X,\bar W)$ satisfies \eqref{eq:sde}.
		
	There is weak uniqueness (or uniqueness in law) if given any two weak solutions $(\bar X, \bar W)$ and $(\tilde X, \tilde W)$, possibly on different probability spaces, one has $\text{Law}(\bar X) = \text{Law}(\tilde X)$.
\end{definition}
Consider the following conditions:
\begin{enumerate}[label = (\textsc{A4'}), leftmargin = 30pt]
	\item	The SDE \eqref{eq:sde} admits a weak solution which is unique in law.
	\label{a5prime}
\end{enumerate}
\begin{enumerate}[label = (\textsc{A5}'), leftmargin = 30pt]
	\item The terminal condition $h$ is continuous, of polynomial growth and satisfies $h>0$. \label{a6prime}
\end{enumerate}
	
Let $(X_{s}^{t,x}, W_s)_{t\leq s \leq T}$ be the unique (weak) solution to SDE \eqref{eq:sde} starting from $(t,x)$ on the probability space $(\bar \Omega,\bar{\mathcal{ F}}_t,\bar \P)$.
Let $(Y_{s}^{t,x}, Z_{s}^{t,x})_{t\leq s \leq T}$ be the unique solution of BSDE$(h(X_{T}^{t,x}), |z|^2/y)$ on $(\bar \Omega, \bar{\mathcal{F}}_t,\bar \P)$.
\begin{theorem}
\label{thm:1/y viscosity}
	Assume that \ref{a4}, \ref{a5prime} and \ref{a6prime} hold.
	Then, $v(t,x):=Y_{t}^{t,x}$ is a viscosity solution of  PDE (\ref{edp1surY}).
\end{theorem}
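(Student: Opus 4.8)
The plan is to exploit the Hopf--Cole type linearisation that is special to the canonical generator, already used in Proposition~\ref{pro:aperitif}. With $\delta=1$ the map $u(y)=\frac13 y^3$ is a $C^2$ increasing bijection of $\R$, and Itô's formula turns BSDE$(h(X^{t,x}_T),|z|^2/y)$ into the \emph{linear} BSDE with zero generator and terminal datum $\frac13 h(X^{t,x}_T)^3$. Hence, writing $w(t,x):=\E[\frac13 h(X^{t,x}_T)^3]$, Lemma~\ref{rk0}(v) identifies the martingale solution $\bar Y^{t,x}_s=\E[\frac13 h(X^{t,x}_T)^3\mid\bar{\cal F}_s]$, so that $\bar Y^{t,x}_t=w(t,x)$ is deterministic and $v(t,x)=Y^{t,x}_t=(3w(t,x))^{1/3}=u^{-1}(w(t,x))$. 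Two observations make this legitimate under the weak hypotheses: first, $w$ depends only on $\mathrm{Law}(X^{t,x}_T)$, so \ref{a5prime} (uniqueness in law) is exactly what is needed for $w$ to be well defined; second, \ref{a4} gives all polynomial moments of $X^{t,x}_T$ by Gronwall, which together with the polynomial growth of $h$ in \ref{a6prime} makes $w$ finite (and, since $h>0$, strictly positive).

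First I would show that $w$ is a continuous viscosity solution of the linear equation $\partial_s w+{\cal L}w=0$ on $[0,T)\times\R^d$ with $w(T,\cdot)=\frac13 h^3$. The viscosity property is the classical (linear) Feynman--Kac statement: the flow property $w(t,x)=\E[w(s,X^{t,x}_s)]$ makes $s\mapsto w(s,X^{t,x}_s)$ a martingale, from which the sub- and supersolution inequalities follow by testing with smooth functions. The genuinely delicate point is \emph{continuity} of $w$, since the coefficients are only continuous with linear growth (\ref{a4}) and no pathwise stability is available. Here I would invoke the weak stability results of the appendix (as in Proposition~\ref{continuitytx}): along $(t^n,x^n)\to(t,x)$ the laws of $X^{t^n,x^n}_T$ converge weakly, and the uniform $p$-moment bounds combined with the polynomial growth of $h$ give uniform integrability of $h(X^{t^n,x^n}_T)^3$, whence $w(t^n,x^n)\to w(t,x)$.

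It remains to transfer the viscosity property from $w$ to $v=u^{-1}(w)$. The mechanism is the pointwise identity, valid for any smooth $\phi$,
\begin{equation*}
	\partial_s\big(u(\phi)\big)+{\cal L}\big(u(\phi)\big)=\phi^2\left(\partial_s\phi+{\cal L}\phi+\frac{|\sigma'\nabla_x\phi|^2}{\phi}\right),
\end{equation*}
checked directly from $u'(\phi)=\phi^2$ and $u''(\phi)=2\phi$. If $\phi$ touches $v$ from above at $(t_0,x_0)$, then since $u$ is increasing $u(\phi)$ touches $w=u(v)$ from above there; as $\phi(t_0,x_0)=v(t_0,x_0)>0$, the subsolution inequality $\partial_s(u(\phi))+{\cal L}(u(\phi))\ge 0$ for $w$ becomes, after dividing by $\phi^2>0$, the subsolution inequality for $v$ in \eqref{edp1surY}. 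The supersolution case is symmetric (touching from below at a local minimum). Since $w$ is a genuine solution it is both, so $v$ is a viscosity solution; moreover $v>0$ because $w>0$, and the terminal condition reads $v(T,\cdot)=u^{-1}(\frac13 h^3)=h$, as required.

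The hard part is exactly the continuity of $w$ in this merely-continuous, non-Lipschitz, unbounded-data regime: one cannot argue pathwise and must instead combine weak convergence of the laws with the uniform integrability supplied by the moment bounds, which is precisely what the appendix stability results furnish. The remaining points are book-keeping: the singularity of $u^{-1}$ at the origin is harmless because $w$ stays strictly positive, and the test functions may be chosen positive near the contact point so that $|\sigma'\nabla_x\phi|^2/\phi$ is well defined there.
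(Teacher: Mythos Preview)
Your approach is correct and genuinely different from the paper's. Both start from the same observation --- the cubic change of variable $u(y)=\tfrac13 y^3$ linearises the canonical BSDE, so $v=u^{-1}(w)$ with $w(t,x)=\E[\tfrac13 h(X^{t,x}_T)^3]$ well defined by uniqueness in law --- and both establish continuity of $w$ (hence of $v$) via weak convergence plus uniform integrability (the relevant appendix result is Proposition~\ref{weakcontinuitytx}, not Proposition~\ref{continuitytx} as you wrote; the latter concerns strong solutions). From there, however, the two proofs diverge. The paper returns to the scheme of Theorem~\ref{viscosity}: it approximates the singular generator $|z|^2/y$ by Lipschitz inf-convolutions $H^n$, invokes the Pardoux--Peng Feynman--Kac result for each $H^n$, and then passes to the limit using locally uniform convergence of $v^n$ to $v$ and stability of viscosity solutions. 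You bypass this machinery entirely by proving the viscosity property for the \emph{linear} equation $\partial_s w+{\cal L}w=0$ directly (flow property plus It\^o on smooth test functions), and then transferring it to $v$ via the pointwise identity $\partial_s(u(\phi))+{\cal L}(u(\phi))=\phi^2\big(\partial_s\phi+{\cal L}\phi+|\sigma'\nabla_x\phi|^2/\phi\big)$, which is exactly the chain rule for $u$. Since $u$ is a smooth increasing bijection and $v>0$ at any contact point, the transfer of sub/supersolution inequalities is immediate.

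What each approach buys: yours is shorter and more transparent for the canonical case, needing only the linear Feynman--Kac statement and a one-line PDE computation; it also makes the role of the transformation visible at the PDE level rather than only at the BSDE level. The paper's route, while heavier here, is the same template as Theorem~\ref{viscosity} and so requires no new PDE argument --- the cost is the extra work of building Lipschitz approximants, proving their locally uniform convergence, and quoting a stability theorem. One caveat on your side: the linear Feynman--Kac viscosity statement under merely continuous linear-growth coefficients and weak uniqueness is not literally off-the-shelf in most references (which assume Lipschitz $\mu,\sigma$); you should spell out that well-posedness of the martingale problem gives the Markov/flow property $w(t,x)=\bar\E[w(s,X^{t,x}_s)]$, and then the standard localisation-by-exit-time argument with It\^o's formula on the test function yields the inequalities.
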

\begin{remark}
\label{rem:no regula}
	$(i)$ \ Theorem \ref{thm:1/y viscosity} is specific for PDEs with the nonlinearity $|\nabla v|^2/v$.
	This restriction in the nonlinearity allows an important gain in the integrability of the terminal value as well as in the regularity to be imposed on the coefficients, since we only assume existence and uniqueness in law.
	For instance by a well-known result of \citet{Stroock-Varad} or \cite[Proposition 1.14]{Cher-Engl05}, Assumption \ref{a5prime} is already satisfied when $\mu$ and $\sigma$ are bounded continuous and such that  $\lambda^*\sigma(t,x)\sigma(t,x)^*\lambda|\ge \varepsilon|\lambda|^2$ for every $\lambda\in \mathbb{R}^d$ and some constant $\varepsilon > 0$.
		
	$(i)$ \ On the other hand, it should be noted that the literature dealing with singular PDEs of the form treated in this paper (which amount to initial value problems by a change of time) often assume the initial condition (corresponding to $h$ in our case) to be bounded and bounded away from zero.
	See e.g. \cite{Dall-Lui-Pet}, where further restrictions are made on the coefficient $\sigma$.	
\end{remark}
\begin{proof}[of Theorem \ref{thm:1/y viscosity}] 
	We treat the case $\theta=1$, the  case $\theta = -1$ goes similarly, with the function $u$ below replaced by the function $\widetilde u(x):= -\frac{1}{x}$. 
	Let $(X^{t,x}, \bar W)$ be a solution of \eqref{eq:sde} on a probability space $(\bar \Omega,\bar {\cal F}, \bar \P)$.
	Let $p>0$ be such that $|h(x)|\le C(1 + |x|^p)$ for some $C\ge0$.
	By Assumption \ref{a4} it is easily checked that $X_T^{t,x}\in L^{3p}(\bar\P)$, thus $h(X^{t,x}_T)\in L^3(\bar\P)$.
	Hence, by Proposition \ref{aperitif}, there is $(Y^{t,x}, Z^{t,x})$ solving BSDE$(h(X^{t,x}_T), |z|^2/y)$ driven by the Brownian motion $\bar W$ on  $(\bar \Omega,\bar {\cal F}, \bar \P)$.
	Notice that the function $v(t,x):=Y_t^{t,x}$ is well-defined and continuous.
	Indeed, put $u(y) := \frac13 y^3$, and let $(\widehat Y_s^{t,x}, \widehat Z_s^{t,x})$ be the unique solution of BSDE$(u(h(X_{T}^{t,x})), 0)$  in $\mathcal{S}^2(\mathbb{R})\times\mathcal{M}^2(\mathbb{R}^d)$. 
	It satisfies $\widehat Y_s^{t,x} = \E_{\bar\P}[u(h(X_{T}^{t,x})\mid \bar{\cal F}_s]= \E_{\bar\P}[u(h(X_{T}^{t,x})\mid X^{t,x}_s] $ where the second equality follows by the Markov property.
	Thus, $\widehat Y^{t,x}_t$ is deterministic.
	That is,
	$$
		v(t,x) = Y^{t,x}_t = u^{-1}(\widehat Y_t^{t,x}) = u^{-1}( \E_{\bar \P}[u(h(X^{t,x}_T))]) = u^{-1}\left(\int_{\mathbb{R}^d}u\circ h(r)\nu^{t,x}(dr)
		\right),
	$$
	where $\nu^{t,x}$ denotes the law of the solution $X^{t,x}_T$ which by \ref{a5prime} is unique.
	In particular, $v(t,x)$ does not depend on the underlying probability basis.
	By \ref{a5prime}, any other solution of \eqref{eq:sde} has the same law, showing that $v$ is well-defined.
	In particular, it does not depend on the probability basis on which $(Y^{t,x}_s, Z^{t,x}_s)$ is defined.
	Furthermore, since $u$ and $h$ are continuous, it follows by Proposition
	\ref{weakcontinuitytx} (in Appendix) that if $(t^n,x^n)\to (t,x)$, then $u\circ h(X^{t^n,x^n}_T) \to u\circ h(X^{t,x}_T)$ in law.
	Applying \cite[Theorem 6.1]{Pardoux99} shows that $\widehat{Y}^{t^n,x^n}$
	converges to $\widehat{Y}^{t,x}$ in law.
	Therefore,  $v(t^n,x^n) = u^{-1}\left(\widehat{Y}_t^{t^n,x^n} \right) \to u^{-1}\left(\widehat{Y}_t^{t,x} \right) = v(t,x)$. The rest of the proof follows exactly the proof of Theorem \ref{viscosity}, except that it is applied on the probability space $(\bar \Omega, \bar {{\cal F}}, \bar \P)$ with Brownian motion $\bar W$.
	Moreover, since the function $|z|^2/y$ is convex (but not continuous)  on  $\mathbb{R}\times \mathbb{R}^d$, we need to approximate it by a different sequence of Lipschitz continuous functions as follows:
	Consider the approximating sequence
	\begin{equation*}
		H^n(y,z) := \inf_{(y',z)\in (0,\infty)\times \mathbb{R}^d}(H(y',z') +
			n||(y,z)-(y',z')||).
	\end{equation*}
	Since $H$ is positive, it follows that for every $n$, $H^n>-\infty$ and so, the function $H^n$ is Lipschitz continuous, and we have $H^n\uparrow H$, see e.g. \cite[Example 9.11]{Rockafellar1998} for details.
	By continuity of $H$ on $(0,\infty)\times \mathbb{R}^d$, we conclude that $H^n$ converges to $H$ locally uniformly.
	\end{proof}
	
\subsection{Probabilistic formulas for singular parabolic PDEs with lateral Neumann boundary conditions}
\label{sec:neumann}

In this section, ${\cal O}\subseteq \mathbb{R}^d$ is an open, convex, connected and bounded subset of $\mathbb{R}^d$.
We assume that there is a function $\Phi\in {\cal C}^2_b(\mathbb{R}^d)$ such that ${\cal O} = \{\Phi >0\}$, $\partial {\cal O} = \{\Phi = 0\}$, $\mathbb{R}^d\setminus \overline{{\cal O}} = \{\Phi <0\}$ and $|\nabla_x \Phi(x)| = 1$ for all $x \in \partial {\cal O}$.
We consider the following parabolic PDE with lateral Neumann boundary conditions:
\begin{equation}
\label{eq:pde von neumann}
	\begin{cases}
		\frac{\partial v}{\partial t} + {\cal L} v + H(t,v, \sigma'\nabla v) = 0 \quad
		\text{for } (t,x) \in [0,T]\times {\cal O}\\
		\frac{\partial v}{\partial n} = 0 \quad \text{for } (t,x) \in [0,T]\times \partial {\cal O}\\
		v(T,x) = h(x) \quad \text{for } x \in \overline{{\cal O}},
	\end{cases}
\end{equation}
with
\begin{equation*}
	\frac{\partial}{\partial n}:= \sum_{i =1}^d\frac{\partial\Phi}{\partial x_i}\frac{\partial}{\partial x_i}.
\end{equation*}
	
Consider the reflected SDE
\begin{equation}
\label{eq:ref sde}
\begin{cases}
	X^{t,x}_s &= x + \int_t^s\mu(u,X^{t,x}_u)\,du + \int_t^s\sigma(u,X^{t,x}_u)\,dW_u + \int_t^s\nabla_x\Phi(X^{t,x}_u)\,dK^{t,x}_u\\
	K^{t,x}_s &= \int_t^s1_{\{X^{t,x}_u\in \partial {\cal O}\}}\,dK^{t,x}_u \quad
	\text{and } K^{t,x} \text{ is nondecreasing},
	\end{cases}
\end{equation}
with $(s,x)\in [t,T]\times {\cal O}$.
\begin{definition}\label{defrsde}
	A strong solution to the reflected SDE \eqref{eq:ref sde} is a pair $(X_{s}^{t,x}, K_{s}^{t,x})_{s\ge t}$ of $\mathcal{F}^W$--adapted processes satisfying equation \eqref{eq:ref sde}  and such that $$\int_t^T|\mu(s,X^{t,x}_u)|\,du + \int_t^T|\sigma(u, X^{t,x}_u)|^2\,du + \int_t^T|\nabla_x\Phi(X^{t,x}_u)|\,dK^{t,x}_u<\infty\quad \P\text{-a.s.}$$
	The solution $(X^{t,x}, K^{t,x})$ is said to be unique if it is indistinguishable to any other solution.
\end{definition}
Let us consider the assumption
\begin{enumerate}[label = (\textsc{A4''}), leftmargin = 30pt]
	\item The SDE \eqref{eq:ref sde} admits a unique strong solution.
	\label{a5second}
\end{enumerate}
We extend the solution process to $\left[ 0,T\right] $ by denoting
\begin{equation}
	X_{s}^{t,x}:=x,\;K_{s}^{t,x}:=0,\; \ \forall s\in\lbrack0,t).
	\label{extens X,K}
\end{equation}
It follows by the work of \citet{Lions-Szit84} that if $\mu:[0,T]\times\overline{{\cal O}}\to\mathbb{R}^d$ and $\sigma:[0,T]\times\overline{\cal O}$ are uniformly Lipschitz continuous, that is, there is $C\ge 0$ such that
\begin{equation*}
	|\mu(t,x) - \mu(t,y)| + |\sigma(t,x) - \sigma(t,y)| \le C|x - y| \text{ for all} x,y,
\end{equation*}
then there is a unique pair $(X^{t,x}_s, K^{t,x}_s)$ satisfying \eqref{eq:ref sde}.
Just as in the case of Equation \eqref{eq:sde}, assumption \ref{a5second} also holds under much weaker conditions, see e.g. \cite{T_Zhang94}.
	
Let $(Y^{t,x}, Z^{t,x})$ satisfy
\begin{equation*}
	Y^{t,x}_s = h(X_T^{t,x}) +\int_s^TH(u,Y^{t,x}_u, Z^{t,x}_u)\,du - \int_s^TZ^{t,x}_u\,dW_u,\quad s\in [t,T].
\end{equation*}
Then, we have:
\begin{theorem}
\label{thm:pde neumann}
	Assume that $\alpha, \beta$ and $\gamma$ are bounded measurable functions on
	$[0,T]$, that $H:[0,T]\times \mathbb{R}\times\mathbb{R}^d\to \mathbb{R}$ is jointly convex in $(y,z)$ and satisfies $0\le H\le g$, further assume that the conditions \ref{a4}, \ref{a5second} and \ref{a6} are satisfied.
	Then, $v(t,x):= Y^{t,x}_t$ is a viscosity solution of \eqref{eq:pde von neumann}.
\end{theorem}
\begin{proof}
	By Proposition \ref{continuitytxK} $X^{t^k,x^k}_T\to X^{t,x}_T$ in $\L^2$ whenever $(t^k,x^k)\to (t,x)$.
	Thus, due to Lemma \ref{lem:continuous}, the mapping $(t,x)\mapsto v(t,x):= Y^{t,x}_t$ is continuous.
		
	Let $H^k$ be the truncated generator defined in \eqref{eq:truncated H}.
	By \cite{Par-Zha98}, the function $v^k(t,x):= Y^{t,x,k}_t$ defines a viscosity solution of the PDE
	\begin{equation*}
		\begin{cases}
			\frac{\partial v^k}{\partial t}(t,x) + {\cal L}v^k(t,x) + H^k(t, v^k(t,x), \sigma'(t,x)\nabla_xv^k(t,x)) = 0, \quad \text{on } [0,T)\times {\cal O},\\
			\frac{\partial v^k}{\partial n} = 0 \quad \text{for } (t,x) \in [0,T]\times \partial {\cal O}\\
			v^k(T,x) = h(x),
			\end{cases}
		\end{equation*}
	where $(Y^{t,x,k}, Z^{t,x,k})$ is the unique solution of BSDE$(h(X^{t,x}_T),H^k)$.
	Note that the result of \cite{Par-Zha98} assumes Lipschitz continuity of $\mu$ and $\sigma$ to guarantee continuity of the function $v^k$.
	In the present article, this is obtained as in the proof of Theorem \ref{viscosity} when \ref{a4}, \ref{a5second} and \ref{a6} are satisfied.
	In fact, for every $k$, continuity of $v^k$ follows by Proposition \ref{continuitytx} and Lemma \ref{lem:continuous} (applied to the generator $H^k$).
		
	As argued in the proof of Theorem \ref{viscosity}, the sequence $(H^k)$ converges to $H$ locally uniformly, and $(v^k)$converges to $v$ locally uniformly.
	Let us use a stability argument to show that $v$ is a viscosity solution to equation \eqref{eq:pde von neumann}.
	We only prove the viscosity subsolution property.
	The viscosity supersolution property is proved similarly.
		
	Let $\phi \in \mathcal{C}^{1,2}_b$, (i.e. twice continuously differentiable with bounded derivatives) such that $v-\phi$ admits a local maximum at $(t_0,x_0)$.
	Without loss of generality, we can assume the maximum to be strict, see e.g.
	\cite[Chap. 2, Lemma 6.1]{Flem-Soner-second}.
	Let $B_\varepsilon(t_0,x_0)$ be a (closed) ball of radium $\varepsilon>0$ around $(t_0,x_0)$ where the maximum is realized.
	Denote by $(t_k,x_k)$ the point at which $v^k - \phi$ reaches its maximum in$B_\varepsilon(t_0,x_0)$.
	Let $(t,x) = \lim_k(t_k,x_k)$.
	Then, $(t_0,x_0) = (t,x)$.
	In fact, by uniform convergence of $v^k$ to $v$ on $B_\varepsilon(t_0,x_0)$, it holds $(v - \phi)(t_0,x_0) =\lim_k(v^k - \phi)(t_0,x_0)\le\lim_{k}(v^k-\phi)(t_k, x_k) = (v - \phi)(t,x)$, which implies that $(t_0,x_0) = (t,x)$ because otherwise $(t_0,x_0)$ is not the maximum.
	On the other hand, since $v^k$ is a viscosity solution to the previous PDE, it
	holds
	\begin{equation*}
		\begin{cases}
			\frac{\partial \phi}{\partial t}(t_k,x_k) + {\cal L}\phi(t_k,x_k) + H^k(t_k,\phi(t_k,x_k), \sigma'(t_k,x_k)\nabla_x\phi(t_k,x_k)) \ge 0, \quad \text{if }
			(t_k,x_k)\in [0,T)\times {\cal O},\\
			\max\left\{\left(\frac{\partial\phi}{\partial t} + {\cal L}\phi\right)(t_k,x_k) + H^k(t_k,\phi(t_k,x_k),\sigma'\nabla_x\phi(t_k,x_k)),\,\frac{\partial \phi}{\partial n}(t_k,x_k) \right\}\ge 0
				\quad \text{if } (t_k,x_k) \in [0,T]\times \partial {\cal O}.
		\end{cases}
	\end{equation*}
	If $x_0 \in {\cal O}$, then $\varepsilon>0$ can be chosen small enough so that $x_k \in {\cal O}$ for $k$ large enough.
	Thus,
	\begin{equation*}
		\frac{\partial \phi}{\partial t}(t_k,x_k) + {\cal L}\phi(t_k,x_k) + H^k(t_k,\phi(t_k,x_k), \sigma'(t_k,x_k)\nabla_x\phi(t_k,x_k)) \ge 0\quad \text{for all } n.
	\end{equation*}
	Since $(H^k)$ converges uniformly on $B_\varepsilon(t_0,x_0)$, it follows that
	\begin{equation*}
		\frac{\partial \phi}{\partial t}(t_0,x_0) + {\cal L}\phi(t_0,x_0) + H(t_0,v(t_0,x_0), \sigma'(t_0,x_0)\nabla_x\phi(t_0,x_0)) \ge 0.
	\end{equation*}

	If $x_0 \in \partial{\cal O}$, it cannot be guaranteed that $x_k\in \partial {\cal O}$ even for $k$ large enough.
	To overcome this difficulty, we will adapt a stability argument put forth in \cite{LionsDuke} for the first order Hamilton Jacobi equations to the present singular second order case.
	Let us assume by contradiction that there is $\delta>0$ such that
	\begin{equation}
	\label{eq:neumann contradiction}
		\max\bigg\{\bigg(\frac{\partial\phi}{\partial t} + {\cal L}\phi\bigg)(t_0,x_0) + H(t_0,\phi(t_0,x_0),\sigma'\nabla_x\phi(t_0,x_0)); \,\frac{\partial\phi}{\partial n}(t_0,x_0) \bigg\}=-\delta<0.
	\end{equation}	
	Let $\Phi$ be the function introduced at the beginning of the section.              
	For a fixed $\varepsilon'>0$, the function $v - \phi -\ varepsilon' \Phi$ has a strict local maximum at $(t_0,x_0)$, since for every $(t,x) \in  B_\varepsilon(t_0,x_0)$, it holds $\Phi(x_0)\le \Phi(x)$ and therefore $(v- \phi- \varepsilon'\Phi)(t,x) < (v-\phi - \varepsilon'\Phi)(t_0,x_0)$.
	Thus, as proved in the first part, $v^k - \phi - \varepsilon'\Phi$ admits a strict local maximum $(t_k,x_k)$ such that $(t_k,x_k)\to (t_0,x_0)$ and since	$v^k$ is a viscosity subsolution, we then have
	\begin{equation*}
		\begin{cases}
			\frac{\partial }{\partial t}(\phi+\varepsilon'\Phi)(t_k,x_k) + {\cal	L}(\phi+\varepsilon'\Phi)(t_k,x_k) + H^k(t_k, \phi+\varepsilon'\Phi(t_k,x_k), \sigma'(t_k,x_k)\nabla_x(\phi+\varepsilon'\Phi)(t_k,x_k)) \ge 0, \\
			\text{if } (t_k,x_k)\in [0,T)\times {\cal O},\\
			\max\left\{\left(\frac{\partial\phi+\varepsilon'\Phi}{\partial t} + {\cal
					L}(\phi+\varepsilon'\Phi)\right)(t_k,x_k) +
				H^k(t_k,\phi+\varepsilon'\Phi(t_k,x_k),\sigma'\nabla_x(\phi+\varepsilon'\Phi)(t_k,x_k)),
				\,\frac{\partial }{\partial n}(\phi+\varepsilon'\Phi)(t_k,x_k) \right\}\ge 0\\
				\text{if } (t_k,x_k) \in [0,T]\times \partial {\cal O}.
		\end{cases}
	\end{equation*}
	We claim that for $k$ large enough and $\varepsilon'$ small enough, it holds
	\begin{equation}
	\label{eq:lions idea}
		\frac{\partial }{\partial t}(\phi+\varepsilon'\Phi)(t_k,x_k) + {\cal L}(\phi+\varepsilon'\Phi)(t_k,x_k) + H^k(t_k, \phi+\varepsilon'\Phi(t_k,x_k),\sigma'(t_k,x_k)\nabla_x(\phi+\varepsilon'\Phi)(t_k,x_k)) \ge 0.
	\end{equation}
	In fact, if $x_k \in {\cal O}$ for every $k$, then \eqref{eq:lions idea} holds.
	If there is $k$ such that $x_k \in \partial {\cal O}$, assume that $\frac{\partial }{\partial n}(\phi+\varepsilon'\Phi)(t_k,x_k)\ge0$.
	Then since $\frac{\partial \Phi}{\partial n}=1$ on $\partial {\cal O}$, this implies $\frac{\partial \phi}{\partial n}\ge-\varepsilon'$. 
	But by \eqref{eq:neumann contradiction}, we have $\frac{\partial \phi}{\partial n}(t_k,x_k)\le -\delta$ (for $k$ large enough).
	Thus, choosing $\varepsilon'<\delta$ yields a contradiction, which proves the claim.
	Taking the limit in \eqref{eq:lions idea} first as $k$ goes to infinity and then the limit as $\varepsilon'$ goes to $0_+$, we have
	\begin{equation*}
		\bigg(\frac{\partial\phi}{\partial t} + {\cal L}\phi\bigg)(t_0,x_0) +	H(t_0,\phi(t_0,x_0),\sigma'\nabla_x\phi(t_0,x_0))\ge 0,
	\end{equation*}
	which contradicts \eqref{eq:neumann contradiction} and concludes the proof.
\end{proof}
\begin{remark}
	In view of Theorem \ref{thm:1/y viscosity} and its proof, the PDE 
	\begin{equation}
	\begin{cases}
		\frac{\partial v}{\partial t} + {\cal L} u +\frac{\sigma'|\nabla v|^2}{v} = 0
		\quad \text{for } (t,x) \in [0,T]\times {\cal O}\\
				\frac{\partial v}{\partial n} = 0 \quad \text{for } (t,x) \in [0,T]\times
				\partial {\cal O}\\
				v(T,x) = h(x) \quad \text{for } x \in \overline{{\cal O}},
		\end{cases}
	\end{equation}
	can also be solved when $h$ is continuous, has polynomial growth and $h>0$ and when $b$ and $\mu$ are two continuous functions of linear growth such that the SDE \eqref{eq:ref sde} admits a unique solution in law.
	The arguments are similar to those of Theorem \ref{thm:1/y viscosity} with minor modifications.
	\end{remark}
	
\section{Applications to decision theory}
\label{sec:decision}
In this final part we provide applications to decision theory in finance, including to expected utility maximization and to the existence of stochastic differential utilities.
	
\subsection{Utility maximization with multiplicative terminal endowment}
\label{sec:utility max}
	
We consider a market with $m$ stocks available for trading ($m\le d$) and following the dynamics
\begin{equation*}
	dS_t = S_t(b_t\,dt + \sigma_t\,dW_t), \quad S_0= s_0 \in \mathbb{R}^m_+
\end{equation*}
where $b$ and $\sigma$ are bounded predictable processes valued in
$\mathbb{R}^m$ and $\mathbb{R}^{m\times d}$, respectively.
We assume that the matrix $\sigma\sigma'$ is of full rank and define the $\mathbb{R}^d$-valued process $\theta:= \sigma'(\sigma\sigma')b$, the so-called market price of risk.
Let us denote by $\pi$ the trading strategy, i.e. $\pi^i_t$ is the part of total wealth invested in the stock $S^i$ at time $t$.
We denote by ${\cal A}$ the set of admissible trading strategies.
It is given by
\begin{equation*}
	{\cal A}:= \bigg\{\pi:[0,T]\times \Omega \to \mathbb{R}^m, \text{ progressive and } \int_0^T|\pi_t\sigma_t|^2\,dt<\infty\bigg\}.
\end{equation*}
Let $x>0$ be the initial wealth.
For every $\pi \in {\cal A}$, the wealth process $X^\pi$ given by
\begin{equation*}
	X_t^\pi = x + \int_0^t\sum_{i = 1}^d\frac{X^{\pi}_s\pi^i_s}{S^i_s}dS^i_s = x +	\int_0^tX^\pi_s\pi_s\sigma_s(\theta_s \,ds + dW_s)
\end{equation*}
is well-defined and positive.
To ease the notation, we put $p:= \pi\sigma$ and by abuse of notation we will write $p\in {\cal A}$.
It is well-known that in the above setting, the market is free of arbitrage, see e.g. \cite{DS94}.
In particular, $X^p$ is a local martingale under the equivalent probability	measure $\Q^\theta$.

The aim of this section is to solve the utility maximization problem from the terminal wealth of an investor with power or logarithmic utility functions and non-trivial terminal endowment $\xi $.
More precisely, we consider the utility maximization problem
\begin{equation}
\label{eq:problem_mult}
	V(x) = \sup_{p \in {\cal A}}E[U(X^p_T\xi)]
\end{equation}
where $U(x) = x^{1-\eta}/(1-\eta)$ with $\eta \in (0,1)$, or $U(x) = \log(x)$.
In \eqref{eq:problem_mult}, one can think of  $\xi$ as some random charge or tax that the investor is required to pay (or receive) for holding $X^p_T$.
That is, the investor pays/receives a terminal endowment $F$ in the form of a (random) portion of the terminal wealth, i.e. $F=X^p_T\Theta$.
In this case, the terminal utility becomes $U(X^p_T + F) = U(X^p_T(1 +\Theta))$, see e.g. \cite{Imk-Rev-Zha}.

\begin{proposition}[Power utility]
\label{prop:power utility}
	Assume that $U(x)= x^{1-\eta}/(1-\eta)$, $\eta \in (0,1)$.
	Further assume that $\eta$ is deterministic, $\xi \in L^\infty$  and $\xi>c$ for some $c>0$.
	Then, the value function is given by $V(x)=U(xY_0)$, where $Y_0$ is the initial value of a solution $(Y,Z)\in {\cal S}^2(\mathbb{R})\times {\cal M}^2(\mathbb{R}^d)$ of BSDE$(\xi, H)$, with
	\begin{equation}
	\label{eq:gen power}
		H(t,\omega,y,z) = 
		\begin{cases}
			\frac{1-2\eta }{2(1-\eta)}\frac{|z|^2}{y} -(1-\eta)\theta_tz+ \frac{1}{2\eta}\theta_t(1 - 2\theta_t)y \quad \text{if } y>0\\
				+\infty\quad \text{else}.
			\end{cases}
	\end{equation}
	Moreover, there exists an optimal admissible trading strategy $p^*$ given by
	\begin{equation*}
		p^*_t = \frac{1}{\eta }\Big(\theta_t + (1-\eta)\frac{ Z_t}{Y_t} \Big)\quad t\in [0,T]
	\end{equation*}
	and such that $\int p^*\,dW$ is a BMO martingale.
\end{proposition}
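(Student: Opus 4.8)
The plan is to use the \emph{martingale optimality principle}. For each admissible $p\in{\cal A}$ I define the candidate value process
\[
	R^p_t := \frac1\delta\big(X^p_t\,Y_t\big)^\delta ,
\]
where $(Y,Z)$ denotes the solution of BSDE$(\xi,H)$. Since $Y_T=\xi$ we have $R^p_T=\frac1\delta(X^p_T\xi)^\delta=U(X^p_T\xi)$, independently of the strategy, while $R^p_0=\frac1\delta(xY_0)^\delta$ is deterministic and independent of $p$. The argument then reduces to showing that $R^p$ is a supermartingale for \emph{every} $p\in{\cal A}$ and a genuine martingale for one distinguished strategy $p^*$: the supermartingale property gives $E[U(X^p_T\xi)]=E[R^p_T]\le R^p_0=\frac1\delta(xY_0)^\delta$ for all $p$, and the martingale property at $p^*$ upgrades this to an equality, yielding $V(x)=\frac{(xY_0)^\delta}{\delta}$ with optimizer $p^*$.

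The first step is the Itô computation that forces the generator. From $dX^p_t=X^p_tp_t(\theta_t\,dt+dW_t)$ and $dY_t=-H(t,Y_t,Z_t)\,dt+Z_t\,dW_t$, applying Itô's formula to $R^p=\frac1\delta(X^pY)^\delta$ and factoring out the positive term $(X^p_t)^\delta$ shows that the finite-variation part of $R^p$ equals $\int (X^p_u)^\delta f_u(p_u)\,du$ with
\[
	f_t(p)=\frac{\delta-1}{2}Y_t^\delta|p|^2+\big(Y_t^\delta\theta_t+\delta Y_t^{\delta-1}Z_t\big)\cdot p-Y_t^{\delta-1}H(t,Y_t,Z_t)+\frac{\delta-1}{2}Y_t^{\delta-2}|Z_t|^2 .
\]
Because $\delta<1$, the map $p\mapsto f_t(p)$ is a strictly concave quadratic, whose unconstrained maximizer is
\[
	p^*_t=\frac{1}{1-\delta}\Big(\theta_t+\frac{\delta Z_t}{Y_t}\Big),
\]
exactly the claimed optimal strategy. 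Demanding that the maximal drift vanish, $\max_p f_t(p)=0$, is, after collecting terms, precisely the statement that $(Y,Z)$ solves BSDE$(\xi,H)$ with the stated singular generator (the coefficient of $|z|^2/y$ being $\frac{2\delta-1}{2(1-\delta)}$), while for $p\neq p^*$ concavity gives $f_t(p)\le 0$, so that $R^p$ has non-increasing drift.

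I would then justify existence, uniqueness and integrability of $(Y,Z)$. The generator $H$ is not sign-definite, so Theorem \ref{thm:exist_BSDE} does not apply verbatim; however a bounded Girsanov shift of $W$ removes the linear-in-$z$ term (legitimate because $\theta$ is bounded, whence $\int\theta\,dW\in\mathrm{BMO}$ and the measure change is equivalent), turning the equation into a BSDE whose generator is $\frac{2\delta-1}{2(1-\delta)}\frac{|z|^2}{y}$ plus a nonnegative linear-in-$y$ term, i.e.\ of the form \ref{a1} with singular exponent $\delta'=\frac{2\delta-1}{2(1-\delta)}$. The restriction $\delta\in(1/2,3/4]$ is exactly what places $\delta'$ in $(0,1]$, so Theorems \ref{thm:exist_BSDE} and \ref{thm:unique_BSDE} together with Proposition \ref{prop:Y bound Z BMO} provide a unique solution with $Y$ bounded and $\int Z\,dW\in\mathrm{BMO}$; since $\xi>c>0$ and the transformed generator is nonnegative, a conditional-expectation comparison gives $Y\ge c>0$. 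Boundedness of $\theta$, the two-sided bounds $c\le Y\le C$ and $\int Z\,dW\in\mathrm{BMO}$ then imply $\int p^*\,dW\in\mathrm{BMO}$, and in particular $p^*\in{\cal A}$.

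Finally I would upgrade the local statements to true (super)martingale statements. Since $R^p=\frac1\delta(X^pY)^\delta\ge0$ is a nonnegative local supermartingale for each $p\in{\cal A}$, conditional Fatou along a localizing sequence shows it is a genuine supermartingale, giving $V(x)\le\frac{(xY_0)^\delta}{\delta}$. The delicate point is the reverse inequality: one must prove that $R^{p^*}$ is a \emph{true} martingale, equivalently that the local martingale $\int (X^{p^*}_u)^\delta Y_u^{\delta-1}(Y_up^*_u+Z_u)\,dW_u$ is a martingale rather than merely local. This rests on controlling the moments of the optimal wealth $X^{p^*}$: the $\mathrm{BMO}$ property of $\int p^*\,dW$ yields $L^r$-bounds for the Doléans exponential defining $X^{p^*}$ for some $r>1$, and combined with the two-sided bounds on $Y$ and with $\delta\le 3/4$ this makes $(X^{p^*}Y)^\delta$ uniformly integrable, i.e.\ $R^{p^*}$ of class (D). I expect this integrability/admissibility verification to be the main obstacle; by contrast the algebraic identification of $H$ and of $p^*$ through the concave quadratic $f_t$ is routine.
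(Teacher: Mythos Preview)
Your proposal is correct and follows essentially the same argument as the paper: the martingale optimality principle applied to $R^p=\frac{1}{\delta}(X^pY)^\delta$, It\^o's formula to identify $H$ and the optimizer $p^*$, existence of $(Y,Z)$ via the paper's singular-BSDE results together with Proposition~\ref{prop:Y bound Z BMO}, and the upgrade of $R^{p^*}$ from a local to a true martingale via BMO and reverse H\"older (Kazamaki) to obtain uniform integrability. The only cosmetic difference is that you remove the linear-in-$z$ term by a bounded Girsanov shift before invoking Theorem~\ref{thm:exist_BSDE}, whereas the paper first removes the linear-in-$y$ term through the exponential change $\bar Y_t=e^{\int_0^t\beta_u\,du}Y_t$; both reductions serve the same purpose of fitting the equation into the framework of \ref{a1}.
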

\begin{proof}
	The proof relies on application of the martingale optimality principle initiated by \cite{Hu-Imk-Mul} and the existence theorems for BSDEs derived above.
	Indeed, let us construct a family of processes $R^p$  such that for all $p \in {\cal A}$, $R^p_T = U(X^p_T  \xi)$; $R^p_0 = R_0$ does not depend on $p$; $R^p$ is a supermartingale for all $p \in {\cal A}$ and, there is $p^*\in {\cal A}$ such that $R^{p^*}$ is a martingale.
	It can be checked that if such a family is constructed, then $p^*$ is the optimal strategy and $R_0 = V(x)$ is the value function. See \cite{Hu-Imk-Mul} for details.
		
	Put $R^p_t:= U(X^p_tY_t)$ where $(Y,Z)$ is a solution to BSDE$(\xi, g)$ such that $c\le Y\le C$ for two strictly positive constants $c,C$ and $ Z\in \mathcal{M}$, for some function $g:[0,T]\times\Omega \times\mathbb{R}\times \mathbb{R}^d\to\mathbb{R}\cup\{+\infty\}$ to be determined.
	Applying It\^o's formula, we obtain
	\begin{align*}
		dR^p_t &= \left(U'(X^p_tY_t)\left\{X^p_tg(t,Y_t,Z_t) + Y_tp_tX^p_t\theta_t + X_t^pp_tZ_t\right\} + \frac 12 U''(X^p_tY_t)(X_t^p)^2|Z_t +p_tY_t|^2\right)\,dt\\
		&\quad + U'(X^p_tY_t)X^p_t(Z_t + Y_tp_t)\,dW_t.
	\end{align*}
	For all $n \in \mathbb{N}$, define the stopping time
	\begin{equation*}
		\tau_n:=\inf\bigg\{ t>0: \int_0^t\big( U'(X^p_sY_s)X^p_s(Z_s + Y_sp_s) \big)^2\,ds\ge n \bigg\}\wedge T.
	\end{equation*}
	Since $\int_0^{\cdot \wedge \tau_n}U'(X^p_sY_s)X^p_s(Z_s + Y_sp_s)\,dW_s$ is a martingale, it follows that if
	\begin{equation}
	\label{eq:condition g}
		g(t,Y_t, Z_t) \le -\left\{\frac{1}{2}\frac{U''(X^p_tY_t)}{U'(X^p_tY_t)}X^p_t|Z_t + p_tY_t|^2 + Y_t p_t\theta_t + p_tZ_t \right\}\quad \text{on } \{t\le \tau_n\},
	\end{equation}
	then the process $(R^p_{t\wedge \tau_n})_{t \in [0,T]}$ is a supermartingale.
	Thus, it follows from Fatou's lemma that $R^p$ is a supermartingale, since $\tau_n\uparrow T$ and $R^p$ has continuous paths.
	Since $U(x) = x^{1-\eta}/(1-\eta)$, the condition \eqref{eq:condition g} amongst to
	\begin{equation*}
		g(t, Y_t, Z_t)\le -\left\{\frac 12 \frac{\eta - 2}{Y_t}|Z_t + p_tY_t|^2 +Y_tp_t\theta_t + p_tZ_t \right\}.
	\end{equation*}
	We therefore put
	\begin{equation*}
		g(t,y,z):= \inf_{p\in \mathbb{R}^m}\Big( -\frac 12 \frac{\eta - 2}{y}|z + py|^2 - yp\theta_t - pz \Big).
	\end{equation*}
	A formal minimization shows that $g = H$ given by \eqref{eq:gen power} and  that the above infimum is attained by
	\begin{equation}
	\label{eq:p-start-mult}
		p^*_t = \frac{1}{\eta}\Big(\theta_t + (1-\eta)\frac{ Z_t}{Y_t}\Big).
	\end{equation}
	Since $Y\ge c$, the process $p^*$ is in $\mathcal{M}^2$.
	In particular, the candidate $p^*$ is admissible.
	Let us show that $R^{p^*}$ is a martingale.
	By construction, the drift term of $R^{p^*}$ is zero, so that
	$(R^{p^*}_{t\wedge \tau_n})_{t\in [0,T]}$ is a martingale.
	Thus, we can conclude by dominated convergence that $R^{p^*}$ is a martingale if we show that the set $\{R^{p^*}_\tau: \tau \text{ stopping time in } [0,T]\}$ is uniformly integrable.
	Since $\int p^*\,dW$ and $\int \theta\,dW$ are BMO martingales, it follows by \cite[Theorem 3.3]{Kazamaki01} that $\int p^*\,dW^{\delta\theta}$ is a BMO martingale under the probability measure $\Q^{(1-\eta)\theta}$, where $W^{(1-\eta)\theta}:=W - \int(1-\eta)\theta\,ds$ is a Brownian motion under $\Q^{(1-\eta)\theta}$.
	Thus, by \cite[Theorem 3.4]{Kazamaki01}, there is $q>1$ such that $X_T^{p^*}=\exp\Big( \int_0^Tp^*_t\,dW^{(1-\eta)\theta}_t-\frac12\int_0^T|p^*_t|^2\,dt\Big)\in \L^q(\Q^{(1-\eta)\theta})$.
	Since $\theta$ is bounded, we have ${\cal E}(-\int_0^T (1-\eta)\theta\,dW)\in\L^p$, with $1/p+1/q = 1$.
	Thus, there is a constant $C\ge0$ such that for every $[0,T]$-valued stopping time $\tau$, (putting $\delta:=1-\eta$) we have
	\begin{align*}
		\E\Big[ (R^{p^*}_\tau)^{1/(1-\eta)}\Big]&\le C\E\bigg[{\cal E}\bigg(\int_0^T\delta\theta\,dW\bigg){\cal E}\bigg(-\int_0^T\delta\theta\,dW\bigg)X^{p^*}_\tau \bigg]\le C\E_{\Q^{\delta\theta}}\bigg[ (X^{p^*}_\tau)^q\bigg]^{1/q}\E\bigg[{\cal E}\bigg(-\int_0^T\delta\theta\,dW\bigg)^{p} \bigg]^{1/p}\\
		&\le CE_{\Q^{\delta\theta}}\bigg[ (X^{p^*}_T)^q\bigg]^{1/q}\E\bigg[{\cal E}\bigg(-\int_0^T\delta\theta\,dW\bigg)^{p} \bigg]^{1/p}<\infty,
	\end{align*}
	where the second inequality follows by H\"older's inequality and the third one by Doob's maximal inequality.
	Therefore, $\{R^{p^*}_\tau: \tau \text{ stopping time in }[0,T]\}$ is uniformly integrable.
		
	It remains to show that BSDE$(\xi, g)$ admits a solution $(Y,Z)$ such that $0<c\le Y\le C$ for some positive constants $c,C$ and $\int Z\,dW$ is a BMO martingale.
	Put $\beta_t:=\frac{\theta_t(1 - 2\theta_t)}{2(1 - \delta)}$.
	Since $\theta$ is deterministic, the filtrations of  $W^{(1-\eta)\theta}$ and $W$ coincide. 
	Therefore,  BSDE$(\xi, g)$ admits a solution if and only if the equation
	\begin{equation}
	\label{eq:power bar bsde}
		\bar Y_t = e^{\int_0^T\beta_u\,du}\xi + \int_t^T\frac{1-2\eta}{2(1-\eta)}\frac{|\bar Z_u|^2}{\bar Y_u} \,du - \int_t^T\bar Z_u\,dW_u^{(1-\eta)\theta}\quad \Q^{(1-\eta)\theta}\text{-a.s.}
	\end{equation}
	admits a solution.
	If $\eta=1/2$, then \eqref{eq:power bar bsde} is clearly solvable.
	When $\eta<1/2$, the equation \eqref{eq:power bar bsde} admits a solution by Proposition \ref{propabsury} and when $\eta>1/2$, the existence follows by Proposition \ref{prop:-absury}.
	Since $\xi$ and $\eta$ are $\P$-a.s. bounded, it follows that the terminal condition of \eqref{eq:power bar bsde} is $\Q^{(1-\eta)\theta}$-a.s. bounded.
	Thus, the desired integrability properties of the solution $(\bar Y, \bar Z)$ follows by Proposition \ref{prop:Y bound Z BMO}
	since $\xi \ge c>0$.
\end{proof}
\begin{proposition}[log utility]
\label{prop:mult}
	Assume that $U(x)= \log(x)$.
	Further assume that $\xi \in \L^2$ and $\xi>0$. %
	Then, the value function is given by $V(x)=\log(xY_0)$, where $Y_0$ is the initial value of a solution $(Y,Z)$ of the BSDE$(\xi, H)$, where
	\begin{equation*}
		H(t,y,z) = 
		\begin{cases}
			\frac{1}{2}\frac{|z|^2}{y} - \frac{1}{2}\theta_t^2y\quad \text{if } y>0\\
				+\infty\quad \text{else},
		\end{cases}
	\end{equation*}
	with $\sup_{0\le t\le T}\E[|Y_t|^2]<\infty$ and $Z \in {\cal L}^2$.
	Moreover, there exists an optimal admissible trading strategy $p^*$ given by
	\begin{equation*}
		p^*_t = \theta_t\quad t\in [0,T].
	\end{equation*}
\end{proposition}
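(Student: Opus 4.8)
The plan is to run the martingale optimality principle of \cite{Hu-Imk-Mul}, exactly as in the proof of Proposition \ref{prop:power utility}, while exploiting the additive structure of the logarithm. I would look for a solution $(Y,Z)$ of BSDE$(\xi,H)$ and set
\begin{equation*}
	R^p_t := U(X^p_tY_t) = \log X^p_t + \log Y_t, \qquad p\in{\cal A},
\end{equation*}
aiming to verify the four properties of the principle: $R^p_T=U(X^p_T\xi)$ (automatic since $Y_T=\xi$), $R^p_0=\log(xY_0)$ independent of $p$, $R^p$ a supermartingale for every admissible $p$, and $R^{p^*}$ a true martingale for some admissible $p^*$. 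As recalled in Proposition \ref{prop:power utility}, this yields $V(x)=\log(xY_0)$ and optimality of $p^*$.

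Next I would apply It\^o's formula to the two summands separately, using $dX^p_t=X^p_tp_t(\theta_t\,dt+dW_t)$ and the dynamics of $Y$. The resulting drift is a concave quadratic in $p_t$ with leading term $-\frac{1}{2}|p_t|^2$, so its pointwise maximization in $p$ is explicit and, crucially, does \emph{not} involve $Z/Y$: the maximizer is the bounded feedback $p^*_t=\theta_t$. Prescribing that this maximal drift vanish is precisely what pins the generator down to the $H$ of the statement and renders the drift nonpositive for every other $p$, so that $R^p$ is a local supermartingale for all $p$ and a local martingale at $p^*$. Since $\theta$ is bounded (because $b$ and $\sigma$ are), admissibility of $p^*$ is immediate, a genuine simplification over the power case, where BMO estimates were needed to admit $p^*$.

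Existence of $(Y,Z)$ is where the hypothesis $\xi\in L^2$ is used. The generator $H$ is of canonical type with quadratic coefficient $\delta=\frac{1}{2}$, and the change of variable $\bar Y=\frac{1}{2}Y^2$ (the $\delta=\frac{1}{2}$ instance of the transform behind Proposition \ref{aperitif}) turns BSDE$(\xi,H)$ directly into the linear equation $d\bar Y_t=\theta^2_t\bar Y_t\,dt+\bar Z_t\,dW_t$ with terminal datum $\frac{1}{2}\xi^2\in L^1$. Solving this explicitly by a conditional-expectation formula and inverting the transform produces a strictly positive $Y$ with $\sup_t\E[Y_t^2]<\infty$ and $Z\in{\cal L}^2$, exactly the regularity claimed, and exactly the $\delta=\frac{1}{2}$ regime of Proposition \ref{aperitif} in which no $\E[\sup_t Y_t^2]$ bound is available. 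The main obstacle, I expect, is upgrading the local (super)martingales to true ones. In contrast to the power case, $R^p=\log(X^pY)$ is \emph{not} bounded below, and since $\xi\in L^2$ need not be bounded away from $0$, the process $Y$ is neither in ${\cal S}^\infty$ nor amenable to the BMO control of Proposition \ref{prop:Y bound Z BMO}. I would therefore replace the plain Fatou argument of the power case by a direct uniform-integrability check of $\{R^{p^*}_\tau:\tau \text{ stopping time}\}$: the term $\log X^{p^*}_T$ has all moments because $p^*=\theta$ is bounded, while the delicate contribution $\log Y_\tau$ near the set where $Y$ approaches $0$ would be controlled through the identity $Y_\tau^2=2\bar Y_\tau$ and the representation of $\bar Y$ as a conditional expectation of $\xi^2$, using $\xi\in L^2$ and $\xi>0$.
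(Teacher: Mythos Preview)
Your proposal follows essentially the same route as the paper: the martingale optimality principle with $R^p_t=\log(X^p_tY_t)$, identification of the generator by pointwise minimization in $p$ yielding $p^*=\theta$, existence of $(Y,Z)$ via the $\delta=\tfrac12$ transform (which the paper invokes through Theorem~\ref{thm:exist_BSDE}), and a uniform-integrability check to upgrade the local martingale $R^{p^*}$. You are in fact more explicit than the paper in flagging the genuine obstacle that $R^p$ is unbounded below, so that the Fatou step used in the power case cannot be copied verbatim; the paper's own proof only records $\sup_\tau\E[\exp(R^{p^*}_\tau)]<\infty$, which controls $(R^{p^*})^+$ but leaves the control of $(R^{p^*})^-$ --- equivalently of $(\log Y_\tau)^-$ near the set $\{Y\approx 0\}$ --- implicit, whereas you at least point to the conditional-expectation representation of $\bar Y=\tfrac12 Y^2$ as the tool for this.
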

\begin{proof}
	The case of logarithmic utility follows exactly as in the proof of Proposition	\ref{prop:power utility}, except that here, $(Y,Z)$ is only assumed to be such that $Y>0$, $\sup_{t\le 0\le T}\E[Y^3_t]<\infty$ and $Z \in {\cal L}^2$. 
	When $U(x) = \log(x)$, the condition \eqref{eq:condition g} amongst to
	\begin{equation*}
		g(t,Y_t, Z_t) \le \frac 12\frac 1{Y_t} |Z_t + p_tY_t|^2 - Y_tp_t\theta_t - p_tZ_t.
	\end{equation*}
	Therefore, we put
	\begin{equation*}
		g(t, Y_t, Z_t) := \inf_{p \in {\cal A}}\left( \frac 12\frac 1{Y_t} |Z_t + p_tY_t|^2 - Y_tp_t\theta_t - p_tZ_t \right).
	\end{equation*}
	Since $Y>0$, the above infimum is attained by $p^*=\theta$ and $g(t, y, z)$ is given by
	\begin{equation*}
		g(t, y, z) = \frac{1}{2}\frac{|z|^2}{y} - \frac{1}{2}\theta_t^2y.
	\end{equation*}
	By Theorem \ref{thm:exist_BSDE}, the BSDE$(\xi, g)$ admits a solution $(Y,Z)$ such that $\sup_{t\le 0\le T}\E[| Y|^2_t]<\infty$ and $ Z \in {\cal L}^2$.
	By assumption, $p^*= \theta$ is admissible.
	It remains to show that $R^{p^*}$ is a martingale.
	This follows exactly as in the proof of Proposition \ref{prop:power utility}.
	In particular, we can use the same arguments to show that
	$\sup_\tau\E[\exp(R^{p^*}_\tau)]<\infty$, so that $\{R^{p^*}_\tau: \tau \text{stopping time in } [0,T]\}$ is uniformly integrable.
	This concludes the proof.
\end{proof}
	
\subsection{Stochastic differential utility}
\label{sec:SDU}

In economics and decision theory, Epstein-Zin preferences \cite{epstein02} refer to a class of (dynamic and) recursively defined utility functions (or preference specifications) given by
\begin{equation*}
	U_t(c) := F\Big(c_t, f^{-1}\Big(\E[f(U_{t+1}(c))\mid {\cal F}_t] \Big) \Big).
\end{equation*}
Here, $U_t(c)$ is the time-$t$ utility of the consumption $c=(c_t,c_{t+1},\dots)$, which is assumed to be an adapted sequence of real-valued random variables, $F:\mathbb{R}^2\to \mathbb{R}_+$ is a given function and $f:\mathbb{R}\to \mathbb{R}$ is a utility function, i.e. a strictly increasing and concave function. 
Epstein-Zin preferences are mostly important because they allow to disentangle risk aversion (modeled by $f$) and intertemporal substitution (modeled by $F$).
The continuous-time analogue (known as stochastic differential utility) of Epstein-Zin preferences was developed by \citet{epstein03} and defined as the unique adapted solution $(U_t)_{0\le t\le T}$ (when it exits) of the integral equation
\begin{equation}
\label{eq:sde xi = 0}
	U_t(c) = \E\Big[\int_t^Tg(c_s, U_s(c)) + \frac 12A(U_s(c))\sigma^2_U(s)\,ds\mid {\cal F}_t \Big].
\end{equation}
Here, $c:\Omega \times [0,T]\to \mathbb{R}$ is an adapted consumption process, $\sigma_U^2$ is the ``volatility'' of the unknown process $U$, and $A:\mathbb{R}\to \mathbb{R}$ and $g:\mathbb{R}\times \mathbb{R}\to \mathbb{R}$ are given functions modeling risk aversion and inter-temporal substitution, respectively.
When considering utility of a (terminal) position $\xi$ in addition to that of a consumption process $c$, the stochastic differential utility takes the form
\begin{equation}
\label{eq:sdu}
	U_t(\xi,c) = \E\Big[\xi+\int_t^Tg(c_s, U_s(\xi,c)) + \frac 12A(U_s(\xi,c))\sigma^2_U(s)\,ds\mid {\cal F}_t \Big].
\end{equation}
	
It was shown in \cite{epstein03} that when the function $A$ is continuous and integrable, the function $g(c,u)$ is Lipschitz continuous in $u$ and of linear growth in $c$, the integral equation \eqref{eq:sdu}, which of course coincides with the BSDE
\begin{equation*}
	dY_t = -\Big( g(c_t, Y_t) + \frac 12 A(Y_t)|Z_t|^2 \Big)\,dt - Z_t\,dW_t, \quad Y_T = \xi,
\end{equation*}
admits a unique square integrable solution.
We also refer to \cite{beo2017} for extensions of this result, for $A$ integrable.  
The case $A$ non globally integrable and $g$ continuous with linear growth has been studied in \cite{Bahlali_domi}.
Moreover, this class of utility functions are important in the context of asset pricing (see \citet{Duf-Eps92}), the case $A(u)\equiv -1/u$ being of particular interest, as the continuous time analogue of the Kreps-Porteus utility. 
This case is the subject of the work of Duffie-Lions \cite{Duf-Lions} who use a PDEs approach, in the Markovian setting. 
In fact, the Kreps-Porteus utility is obtained with the specifications
\begin{equation*}
	f_t(\omega,c,u) :=   \beta_t(\omega)u\quad \text{and}\quad
		A(u) := -\frac{\delta}{u} 
\end{equation*}
for some $\delta\ge0$ and $\beta$ two progressive processes.
The present work gives a BSDEs (and thus purely probabilistic) approach to this problem. 
	
A direct consequence of our main result is the following extension of the existence of a class of dynamic differential utilities of Kreps-Porteus type.
\begin{proposition}[Kreps-Porteus utility]
\label{prop:SDU}
	Let $\xi$ be strictly positive, assume that  $\xi,\beta$ satisfy the integrability condition 
	\begin{equation*}
	 	\exp\Big(\int_0^T|-\delta + 1||\beta_s| \,ds  \Big)\xi^{-2\delta + 1} \in \L^2.
	\end{equation*} 
	Put
	\begin{equation*}
		H_t(\omega,y,z):=-\frac{\delta}{2}\frac{|z|^2}{y} - \beta_t(\omega)y.
	\end{equation*}
	Then, there exists a stochastic differential utility $U \in {\cal S}^{(-\delta + 1)}(\mathbb{R})$ satisfying
	\begin{equation*}
		U_t(\xi,c) = \E\bigg[\xi+\int_t^Tf(c_s, U_s(\xi,c)) +\frac12A(U_s(\xi,c))\sigma^2_U(s)\,ds\mid {\cal F}_t \bigg].
	\end{equation*}
	Moreover, if $\xi$ and $\beta$ are bounded, then $U$ is bounded and unique among all bounded solutions with $\sigma_U\in \mathcal{M}^2$ .
\end{proposition}
\begin{proof}
	This result is a direct consequence of Corollary \ref{cor:cor.main.exit}.
\end{proof}
	
\begin{appendix}
\section{Continuity of SDE solutions w.r.t. initial parameters}
In this section we present the main arguments of the proofs of continuity results of SDE solutions in their initial conditions.
The first two results concern strong solutions of SDE with and without	reflection,  their proofs are modest extensions of the main result of \cite{bmo1998}.
The last result concerns a form of continuity for weak solutions.
This result seems to be new.		
\begin{proposition}\label{continuitytxK}
	Assume that \ref{a4} and \ref{a5prime} are satisfied, let $(t^n,x^n)$ be a	sequence in $[0,T]\times {\cal O}$ converging to $(t,x)$.
	If the pathwise uniqueness holds for Equation \ref{eq:ref sde}, then the sequence of processes $(X^{t_n,x_n}, K^{t_n,x_n})$ converges in $\mathcal{S}^2(\mathbb{R})$ to $(X^{t,x}, K^{t,x})$ which is the unique  solution of the SDE \eqref{eq:ref sde} starting at $x$ at time $t$.
\end{proposition}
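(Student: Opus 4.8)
The plan is to follow the stability argument for SDEs with merely continuous coefficients, adapting the method of \cite{bmo1998} to the reflected setting. Since $\mu,\sigma$ are only continuous (not Lipschitz) under \ref{a4}, one cannot close a Gronwall estimate on the difference $X^{t_n,x_n}-X^{t,x}$ directly; instead convergence is obtained by combining tightness of the laws with the assumed pathwise uniqueness, in the spirit of Yamada--Watanabe. First I would set up uniform estimates. Recall that each process is extended to $[0,T]$ by \eqref{extens X,K}, so that every $(X^{t_n,x_n},K^{t_n,x_n})$ is a genuine process on $[0,T]$. Because $\mathcal{O}$ is bounded, $X^{t_n,x_n}$ lives in the compact set $\overline{\mathcal{O}}$, which yields uniform (indeed deterministic) bounds on $\sup_s|X^{t_n,x_n}_s|$. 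Using \ref{a4} and the standard estimates for the Skorokhod problem on the bounded domain $\mathcal{O}$, I would then derive a uniform bound $\sup_n \E[(K^{t_n,x_n}_T)^2]<\infty$ together with moment estimates for the increments of $X^{t_n,x_n}$ and $K^{t_n,x_n}$.

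Next comes tightness. From the increment estimates and the Kolmogorov/Aldous criteria I would show that the family of laws of the triples $(X^{t_n,x_n},K^{t_n,x_n},W)$ is tight on $C([0,T];\mathbb{R}^d)^3$; the monotonicity of $K^{t_n,x_n}$ makes its tightness transparent. Invoking Prokhorov's theorem and Skorokhod's representation theorem, I pass along a subsequence to a probability space carrying copies $(\bar X,\bar K,\bar W)$ with almost sure uniform convergence, where $\bar W$ is a Brownian motion. One then checks that the limit solves the reflected equation \eqref{eq:ref sde} started from $(t,x)$: passage to the limit in the drift and in the It\^o integral uses continuity of $\mu,\sigma$ and uniform integrability (boundedness of $X$ is convenient here), while the delicate point is to verify that $\bar K$ is nondecreasing, satisfies $\bar K=\int 1_{\{\bar X\in\partial\mathcal{O}\}}\,d\bar K$, and that $\int \nabla_x\Phi(X^{t_n,x_n})\,dK^{t_n,x_n}$ converges to $\int\nabla_x\Phi(\bar X)\,d\bar K$.

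I expect this last step --- passing to the limit in the reflection (boundary local time) term --- to be the main obstacle, since the map from the driving data to the Skorokhod reflection is not continuous in a naive sense; one uses the characterization of the Skorokhod problem, together with the a.s.\ uniform convergence of $X^{t_n,x_n}$, to identify the limit of the bounded-variation terms and to confirm the support condition on $\bar K$. Finally I would close the argument by pathwise uniqueness: every subsequence produces, via the above, a weak solution of \eqref{eq:ref sde} started at $(t,x)$, and by the assumed pathwise uniqueness all these limits coincide with the unique solution $(X^{t,x},K^{t,x})$. A standard Yamada--Watanabe doubling argument then upgrades this to convergence in probability of the original (full) sequence, since every subsequence has a further subsequence converging to the same limit. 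The uniform moment bounds (and the boundedness of $\mathcal{O}$) provide the uniform integrability needed to strengthen convergence in probability to convergence in $\mathcal{S}^2$, which is the claimed conclusion.
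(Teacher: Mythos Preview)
Your overall strategy---tightness, Skorokhod representation, identification of the limit via pathwise uniqueness---is exactly the route the paper takes, following \cite{bmo1998}. However, the order of operations in your sketch contains a real gap. You apply Skorokhod only to the triple $(X^{t_n,x_n},K^{t_n,x_n},W)$, identify the limit as a solution of \eqref{eq:ref sde}, and then propose to ``upgrade'' to convergence in probability afterwards via a Yamada--Watanabe doubling. That ordering does not work: from Skorokhod on the triple you only obtain convergence in law of $X^{t_n,x_n}$ to $X^{t,x}$, and the subsequence principle you invoke would again only yield convergence in law, not in probability, since the almost-sure convergence lives on the auxiliary Skorokhod space, not the original one.

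The doubling must happen \emph{inside} the Skorokhod step. The paper carries the full $5$-tuple
\[
(X^{t_n,x_n},\,K^{t_n,x_n},\,X^{t,x},\,K^{t,x},\,W)
\]
through tightness and Skorokhod, producing copies $(\bar X^n,\bar K^n,\bar Y^n,\bar K^{1,n},\bar W^n)\to(\bar X,\bar K,\bar Y,\bar K^{1},\bar W)$ almost surely. Both $(\bar X,\bar K)$ and $(\bar Y,\bar K^{1})$ are then solutions of \eqref{eq:ref sde} started at $(t,x)$ and driven by the \emph{same} Brownian motion $\bar W$, so pathwise uniqueness forces $(\bar X,\bar K)=(\bar Y,\bar K^{1})$. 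Because the joint law is preserved,
\[
\E\Big[\sup_{s}\|(X^{t_n,x_n}_s,K^{t_n,x_n}_s)-(X^{t,x}_s,K^{t,x}_s)\|^2\Big]
=\bar\E\Big[\sup_{s}\|(\bar X^n_s,\bar K^n_s)-(\bar Y^n_s,\bar K^{1,n}_s)\|^2\Big]\longrightarrow 0,
\]
which is directly the $\mathcal S^2$ convergence claimed (the paper packages this as a contradiction argument). Once you relocate the doubling to the Skorokhod step, the rest of your outline---uniform moment bounds from boundedness of $\mathcal{O}$, passage to the limit in drift, diffusion and reflection terms, and uniform integrability for the $\mathcal S^2$ upgrade---matches the paper's proof.
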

\begin{proof}
	We follow the idea of the proofs given in  \cite{bmo1998, Kaneko-Nakao}, we also use some computations from \cite{Bah-Mal-Zal13}.
	Assume that the conclusion of Proposition \ref{continuitytxK} is false. 
	Then there exist a positive number $\delta$ and a sequence $(t_n, x_n)$ converging to $(t,x)$ such that
	\begin{equation}\label{contraryK}
		\inf_n\E\Big[\sup_{t\leq s\leq T}||(X_{s}^{t_n,x_n}, K_{s}^{t_n,x_n})	-(X_{s}^{t,x}, K_{s}^{t,x})||^2\Big]\geq \delta.
	\end{equation}	
	Without loss of generality, we assume that $t_n \geq t$ for each $n$.
	Arguing as in the proof of \cite[Lemma 3.8, p. 12]{Bah-Mal-Zal13}  and using assumption \ref{a4} and boundedness of the domain ${\cal O}$, we show that the sequence $(X^{t_n,x_n}, K^{t_n,x_n}, X^{t,x}, K^{t,x}, W)$ is tight in $C([0,T],\mathbb{R}^d)$. Hence, according to Skorohod's representation theorem, there exists a sequence of processes $(\bar X^n, \bar K^n,  \bar Y^n, \bar K^{1,n}, \bar W^n)_{n\geq 1}$ and a process $(\bar X, \bar K, \bar Y, \bar K^{1},\bar W)$ defined on some probability space $(\bar\Omega, \mathcal{\bar F}, \bar\P)$ such that for each $n$
	\begin{align}\label{barlawK}
		\text{Law} (\bar X^n, \bar K^n,  \bar Y^n, \bar K^{1,n}, \bar W^n) =
		\text{Law}(X^{t_n,x_n}, K^{t_n,x_n}, X^{t,x}, K^{t,x}, W)
	\end{align}	
	and there exists a subsequence still denoted $(\bar X^n, \bar K^{n}, \bar Y^n, \bar K^{1,n}, \bar W^n)$, such that
	\begin{align}\label{barconvergeK}
		\lim_{n\rightarrow \infty} (\bar X^n, \bar K^{n}, \bar Y^n, \bar K^{1,n}, \bar W^n) \ = \ (\bar X, \bar K, \bar Y, \bar K^{1},\bar W) \ \hbox{uniformly on
		every finite interval $\bar\P$-a.s.}
	\end{align}		
	Let $\bar {\cal F}_t^n$ (resp. $\bar {\cal F}_t$) be the $\sigma$-algebra $\sigma\big(\bar X_s^n, \bar Y_s^n, \bar W_s^n ; s\leq t\big)$ (resp. $\sigma\big(\bar X_s, \bar Y_s, \bar W_s ;  s\leq t\big)$) completed with $\bar P$-null sets. Hence $\big(\bar W_s^n, \bar {\cal F}_t^n\big)$ and $\big( \bar W_t,  \bar {\cal F}_t\big)$ are $\bar P$ Brownian motions and the processes $\bar X^n, \bar K^{n}, \bar Y^n, \bar K^{1,n}$ (resp.  $\bar X, \bar K, \bar Y, \bar K^{1}$) are adapted to $\bar {\cal F}_t^n$ and $\bar {\cal F}_t$ respectively.
			
	From \eqref{barlawK} and \eqref{eq:ref sde} we have
	\begin{equation}\label{SDEXnbarK}
		\begin{cases}
			\bar  X_{s}^{n} 
			&=x_n+\displaystyle\int_{t_n}^{s}\mu({r,\bar{X}_{r}^{n})}dr+\displaystyle\int_{t_n}^{s}\sigma(r, \bar X_{r}^{n})d \bar W_{r}^{n}+ \int_{t^n}^s\nabla_x\Phi(\bar X^{n}_r)\,d\bar K^{n}_r \quad \bar \P\text{-a.s.}\\
			\bar K^{n}_s &= \int_{t_n}^s1_{\{ \bar X^{n}_u\in \partial {\cal O}\}}\,d\bar K^{n}_u
		\end{cases}
	\end{equation}
	and
	\begin{equation}\label{SDEYnbarK}  
		\begin{cases}
			\bar  Y_{s}^{n} &=x+\displaystyle\int_{t^n}^{s}\mu({r,
			\bar Y_{r}^{n})}dr+\displaystyle\int_{t^n}^{s}\sigma(r, \bar Y_{r}^{n})d\bar
			W_{r}^{n} + \int_{t^n}^s\nabla_x\Phi(\bar Y^n_r)\,d\bar K^{1,n}_r \quad \bar \P
			\text{-a.s.}\\
			\bar K^{1,n}_s &= \int_{t_n}^s1_{\{ \bar Y^{n}_u\in \partial {\cal O}\}}\,d\bar K^{1,n}_u,
		\end{cases}
	\end{equation}
	see \cite{bmo1998} and \cite{Bah-Mal-Zal13} for details.
	Using \eqref{barconvergeK}, \eqref{SDEXnbarK} and \eqref{SDEYnbarK}, we show that
	\begin{equation}\label{SDEXbar}
		\begin{cases}
			\bar X_{s}
			&=x+\displaystyle\int_{t}^{s}\mu({r,\bar {X}_{r})}dr+\displaystyle\int_{t}^{s}\sigma(r, \bar X_{r})d \bar W_{r} + \int_t^s\nabla_x\Phi(\bar X^{n}_r)\,d\bar K_r \quad \bar \P \text{-a.s.}\\
			\bar K_s &= \int_{t}^s1_{\{ \bar X_u\in \partial {\cal O}\}}\,d\bar K_u
		\end{cases}
	\end{equation}
	and
	\begin{equation}\label{SDEYbar}  
		\begin{cases}
			\bar  Y_{s} 
			&=x+\displaystyle\int_{t}^{s}\mu({r,\bar
			Y_{r})}dr+\displaystyle\int_{t}^{s}\sigma(r, \bar Y_{r})d\bar W_{r} + \int_t^s\nabla_x\Phi(\bar Y_r)\,d\bar K^{1}_r \quad \bar \P \text{-a.s.}\\
			\bar K^{1}_s &= \int_{t}^s1_{\{ \bar Y_u\in \partial {\cal O}\}}\,d\bar K^{1}_u.
		\end{cases}
	\end{equation}
	Thus, $(\bar X, \bar K)$ and $(\bar Y, \bar K^{1})$ satisfy then the same SDE with the same Brownian motion and the same initial value. 
	Therefore the pathwise uniqueness property shows that $(\bar X, \bar K)$ and $(\bar Y, \bar K^{1})$ are indistinguishable.
	Returning back to \eqref{contraryK}, we use \eqref{barlawK}, \eqref{barconvergeK} and the uniform integrability to get
	\begin{align*}\label{contrary2}
		\delta & \leq \lim\inf_n\E\Big[\sup_{t\leq s\leq T}||(X_{s}^{t_n,x_n},
		K_{s}^{t_n,x_n}) -(X_{s}^{t,x}, K_{s}^{t,x})||^2\Big] \nonumber	\\
		& = \lim\inf_n \bar \E\Big[\sup_{t\leq s\leq T}||(\bar X_{s}^{n}, \bar K_{s}^{n}) - (\bar  Y_{s}^{n}, \bar K_{s}^{1,n}) ||^2\Big]
			\\
		& \leq  \bar \E\Big[\sup_{t\leq s\leq T}||(\bar X_{s}, \bar K_{s})- (\bar Y_{s}, \bar K_{s}^{1})||^2\Big] \nonumber
				\\
		& = 0 \nonumber
	\end{align*}
	which is a contradiction. The proof is finished.
\end{proof}
		
\begin{proposition}\label{continuitytx}
	Assume that \ref{a3} and \ref{a4} are satisfied, let $(t^n,x^n)$ be a sequence in $[0,T]\times \mathbb{R}^d$ converging to $(t,x)$.
	Then the sequence of processes $(X^{t_n,x_n})$ converges in $\mathcal{S}^2(\mathbb{R})$ to $(X^{t,x})$ which is the unique strong solution of the SDE \eqref{eq:sde} starting at $x$ at time $t$.
\end{proposition}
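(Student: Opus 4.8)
The plan is to argue exactly as in the proof of Proposition~\ref{continuitytxK}, simply dropping the reflection terms $K$, so that what follows is a notationally lighter specialization of that argument. I would proceed by contradiction: suppose the claimed convergence fails. Then there exist $\delta>0$ and a subsequence, still denoted $(t_n,x_n)$, such that
\begin{equation*}
	\inf_n \E\Big[\sup_{t\le s\le T}|X^{t_n,x_n}_s - X^{t,x}_s|^2\Big]\ge \delta ,
\end{equation*}
and without loss of generality $t_n\ge t$ for every $n$ (recall the convention $X^{t_n,x_n}_s = x_n$ for $s<t_n$).

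First I would establish the moment estimates and tightness needed to invoke Skorohod's theorem. Using the linear growth assumption~\ref{a4} together with the Burkholder--Davis--Gundy and Gronwall inequalities, one obtains, for some $p>2$, a uniform bound $\sup_n \E[\sup_{0\le s\le T}|X^{t_n,x_n}_s|^p]<\infty$ and a Kolmogorov-type estimate $\E[|X^{t_n,x_n}_s - X^{t_n,x_n}_u|^p]\le C|s-u|^{p/2}$ with a constant $C$ independent of $n$. Since $p/2>1$, these imply that the laws of $(X^{t_n,x_n},X^{t,x},W)$ are tight in $C([0,T],\mathbb{R}^d)$. By Skorohod's representation theorem there is a probability space $(\bar\Omega,\bar{\mathcal F},\bar\P)$ carrying processes $(\bar X^n,\bar Y^n,\bar W^n)$ with $\mathrm{Law}(\bar X^n,\bar Y^n,\bar W^n)=\mathrm{Law}(X^{t_n,x_n},X^{t,x},W)$ and a limit $(\bar X,\bar Y,\bar W)$ such that, along a further subsequence, $(\bar X^n,\bar Y^n,\bar W^n)\to(\bar X,\bar Y,\bar W)$ uniformly on $[0,T]$, $\bar\P$-a.s.; as in Proposition~\ref{continuitytxK}, $\bar W^n$ and $\bar W$ are Brownian motions for the completed filtrations they generate.

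Next I would pass to the limit in the equations. By the equality of laws, $(\bar X^n,\bar W^n)$ solves~\eqref{eq:sde} started at $x_n$ at time $t_n$ and $(\bar Y^n,\bar W^n)$ solves it started at $x$ at time $t$. Using continuity of $\mu$ and $\sigma$ (part of~\ref{a4}) and the a.s. uniform convergence, the drift integrals converge pathwise by dominated convergence, while the stochastic integrals $\int \sigma(r,\bar X^n_r)\,d\bar W^n_r$ converge to $\int\sigma(r,\bar X_r)\,d\bar W_r$ in $L^2(\bar\P)$ via the It\^o isometry, the uniform moment bounds providing the integrability that controls the non-Lipschitz dependence on the state. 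One thus shows that both $\bar X$ and $\bar Y$ satisfy~\eqref{eq:sde} driven by the same $\bar W$ with the same initial datum $x$ at time $t$. Pathwise uniqueness then forces $\bar X$ and $\bar Y$ to be indistinguishable.

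Finally I would close the contradiction. The uniform $L^p$ bound with $p>2$ makes the family $\sup_{t\le s\le T}|\bar X^n_s-\bar Y^n_s|^2$ uniformly integrable, so the a.s. convergence upgrades to convergence of expectations and
\begin{equation*}
	\delta\le \liminf_n\E\Big[\sup_{t\le s\le T}|X^{t_n,x_n}_s - X^{t,x}_s|^2\Big] = \liminf_n\bar\E\Big[\sup_{t\le s\le T}|\bar X^n_s - \bar Y^n_s|^2\Big]\le \bar\E\Big[\sup_{t\le s\le T}|\bar X_s-\bar Y_s|^2\Big]=0,
\end{equation*}
a contradiction, which proves the convergence; the identification of the limit with the unique strong solution $X^{t,x}$ is then immediate from the same uniqueness. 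The hard part is the passage to the limit in the stochastic integral: because $\sigma$ is only continuous (not Lipschitz) and the driving motions $\bar W^n$ themselves vary with $n$, this step cannot be handled by a direct Gronwall estimate on the original space, and it is precisely here that the Skorohod coupling together with the uniform-integrability bounds does the work.
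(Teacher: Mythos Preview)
Your proposal is correct and matches the paper's approach: the paper simply states that the proof is similar to (and simpler than) that of Proposition~\ref{continuitytxK}, i.e., the Skorohod--coupling/pathwise-uniqueness argument with the reflection terms removed, which is precisely what you carry out. The only minor point is that the pathwise uniqueness you invoke is really supplied by assumption~\ref{a5} (the reference to~\ref{a3} in the statement appears to be a typo), so you may want to flag that.
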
	
The proof is similar (and simpler) than that of Proposition \ref{continuitytxK}.		
\begin{proposition}\label{weakcontinuitytx}
	Assume that  \ref{a4} is satisfied, let $(t^n,x^n)$ be a sequence in $[0,T]\times {\cal O}$ converging to $(t,x)$.
	If the uniqueness in law holds for Equation \eqref{eq:ref sde}, then the sequence of processes $(X^{t_n,x_n})$ converges in law   to $X^{t,x}$ which is the unique  solution (in law) of the SDE \eqref{eq:ref sde} starting at $x$ at time $t$.
\end{proposition}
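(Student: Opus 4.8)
The plan is to argue by tightness and the Skorokhod representation theorem, identifying any weak limit point as a weak solution of \eqref{eq:ref sde} started from $(t,x)$, and then invoking the assumed uniqueness in law to pin down the limit. This parallels the proof of Proposition \ref{continuitytxK}, the essential difference being that uniqueness is now only in law.

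First I would establish tightness of the laws of the family $(X^{t^n,x^n}, K^{t^n,x^n}, W)$ on $C([0,T],\mathbb{R}^d)$. Using the linear growth bound in \ref{a4} together with the Burkholder--Davis--Gundy inequality and Gronwall's lemma, one obtains the uniform moment bound $\sup_n \E[\sup_{s\le T}|X^{t^n,x^n}_s|^2] < \infty$, while boundedness of $\mathcal{O}$ controls the reflection term $K^{t^n,x^n}$. A Kolmogorov- (or Aldous-) type modulus-of-continuity estimate then yields tightness, so by Prokhorov's theorem every subsequence admits a weakly convergent sub-subsequence.

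Next, along such a sub-subsequence I would invoke Skorokhod's representation theorem to realize the convergence almost surely on an auxiliary space $(\bar\Omega, \bar{\mathcal F}, \bar\P)$, obtaining processes $(\bar X^n, \bar K^n, \bar W^n)$ with the same laws as $(X^{t^n,x^n}, K^{t^n,x^n}, W)$ and converging uniformly on $[0,T]$, $\bar\P$-a.s., to a limit $(\bar X, \bar K, \bar W)$. Exactly as in Proposition \ref{continuitytxK}, the continuity of $\mu$ and $\sigma$ (from \ref{a4}) lets one pass to the limit in the drift and diffusion integrals, and the stability of the Skorokhod reflection map lets one pass to the limit in $\int \nabla_x\Phi(\bar X^n)\,d\bar K^n$; this shows that $(\bar X, \bar K, \bar W)$ solves \eqref{eq:ref sde} with initial datum $(t,x)$, so that $\bar X$ is a weak solution started from $(t,x)$.

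The decisive step is the identification of the limit law. Since pathwise uniqueness is not available here, I cannot argue as in Proposition \ref{continuitytxK} that two limits are indistinguishable; instead, the assumed uniqueness in law for \eqref{eq:ref sde} forces every subsequential weak limit $\bar X$ to have the law of $X^{t,x}$. As this holds for each convergent sub-subsequence, a standard subsequence argument gives that the full sequence $(X^{t^n,x^n})$ converges in law to $X^{t,x}$. The main obstacle is the limit-passage in the reflection term: controlling $\int_{t^n}^{\cdot}\nabla_x\Phi(\bar X^n)\,d\bar K^n$ and verifying that the limiting $\bar K$ is nondecreasing and supported on $\{\bar X\in\partial\mathcal{O}\}$, which rests on the boundary-regularity of $\Phi$ and the continuity of the Skorokhod problem. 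The non-reflected case of \eqref{eq:sde} follows by the same scheme with the reflection term omitted.
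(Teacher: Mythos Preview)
Your approach is essentially the same as the paper's: tightness of the laws, Skorokhod representation on an auxiliary space, passage to the limit in the SDE to identify any subsequential limit as a weak solution started from $(t,x)$, and then the assumed uniqueness in law to pin down the limit and upgrade subsequential convergence to convergence of the full sequence.

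The one noteworthy difference is that you take the statement at face value and carry the reflection data $(K^{t^n,x^n})$ through the argument, worrying about the limit passage in $\int \nabla_x\Phi(\bar X^n)\,d\bar K^n$ and the support condition for the limiting $\bar K$. The paper's own proof in fact drops the reflection entirely: it works only with the pair $(X^{t^n,x^n},W)$ and with the non-reflected equation \eqref{eq:sde}, consistently with how the proposition is actually used (in Theorem~\ref{thm:1/y viscosity}, under \ref{a5prime}). So the reference to \eqref{eq:ref sde} in the statement appears to be a typo for \eqref{eq:sde}. Your more careful treatment of the reflected case is not wrong, and the obstacle you flag (stability of the Skorokhod reflection) is the genuine technical point if one really wants the reflected version; but for the purpose the paper has in mind, the simpler non-reflected argument suffices.
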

		
\begin{proof}
	Without loss of generality, we assume that $t_n \geq t$ for each $n$.
	Using assumption \ref{a4} , we show that the sequence $(X^{t_n,x_n},  W)$ is tight in $C([0, T],\mathbb{R}^d)$. Hence, according to Skorohod's representation theorem, there exists a sequence of processes $(\bar X^n, \bar W^n)_{n\geq1}$ and a process $(\bar X, \bar W)$ defined on some probability space $(\bar\Omega, \mathcal{\bar F}, \bar\P)$ such that for each $n$
	\begin{align}\label{barlaw}
		\text{Law} (\bar X^n,   \bar W^n) = \text{Law}(X^{t_n,x_n}, W)
	\end{align}	
	and there exists a subsequence still denoted $(\bar X^n,   \bar W^n)$, such that
	\begin{align}\label{barconverge}
		\lim_{n\rightarrow \infty} (\bar X^n,  \bar W^n) \ = \ (\bar X, \bar W) \
		\hbox{uniformly on every finite interval $\bar\P$-a.s.}
	\end{align}		
	Let $\bar {\cal F}_t^n$ (resp. $\bar {\cal F}_t$) be the $\sigma$-algebra $\sigma\big(\bar X_s^n,  \bar W_s^n ; s\leq t\big)$ (resp. $\sigma\big(\bar X_s, \bar W_s ;  s\leq t\big)$) completed with $\bar P$-null sets. Hence $\big(\bar W_t^n, \bar {\cal F}_t^n\big)$ and $\big( \bar W_t,  \bar {\cal F}_t\big)$ are $\bar P$ Brownian motions and the processes $\bar X^n$ (resp.  $\bar X$) are adapted to $\bar {\cal F}_t^n$ and $\bar{\cal F}_t$ respectively.
			
	From \eqref{barlaw} and \eqref{eq:sde} we have
	\begin{equation}\label{SDEXnbar}
		\bar  X_{s}^{n}
		=x_n+\displaystyle\int_{t_n}^{s}\mu({r,\bar{X}_{r}^{n})}dr+\displaystyle\int_{t_n}^{s}\sigma(r, \bar X_{r}^{n})d \bar W_{r}^{n} \quad \bar \P \text{-a.s.}
	\end{equation}
	Using \eqref{barlaw}, \eqref{barconverge} and \eqref{SDEXnbar} we show that
	\begin{equation}\label{SDEXbar}
		\bar X_{s}=x+\displaystyle\int_{t}^{s}\mu({r,\bar {X}_{r})}dr+\displaystyle\int_{t}^{s}\sigma(r, \bar X_{r})d \bar W_{r}  \quad \bar \P \text{-a.s.}
	\end{equation}
	The  uniqueness in law shows that $\bar X$ and $X^{t,x}$ have the same law and that the whole sequence $X^{t_n,x_n}$ converges in law to  $X^{t,x}$.
\end{proof}
\end{appendix}

	% \bibliographystyle{abbrvnat}
	% \bibliography{references-Concen_RM}

\begin{thebibliography}{52}
\providecommand{\natexlab}[1]{#1}
\providecommand{\url}[1]{\texttt{#1}}
\expandafter\ifx\csname urlstyle\endcsname\relax
  \providecommand{\doi}[1]{doi: #1}\else
  \providecommand{\doi}{doi: \begingroup \urlstyle{rm}\Url}\fi

\bibitem[Bahlali(2007)]{bah2007}
K.~Bahlali.
\newblock Flows of homeomorphisms of stochastic differential equations with
  measurable drift.
\newblock \emph{Stochastics}, 67\penalty0 (1-2):\penalty0 53--82, 2007.

\bibitem[Bahlali(2020)]{Bahlali_domi}
K.~Bahlali.
\newblock A domination method for solving unbounded quadratic {BSDEs}.
  ({P}revious title:{S}olving unbounded quadratic {BSDEs} by a domination
  method, available in hal-01972711).
\newblock \emph{Grad. J. Math.}, 5\penalty0 (Special Issue):\penalty0 20--36,
  2020.

\bibitem[Bahlali et~al.(1998)Bahlali, Mezerdi, and Ouknine]{bmo1998}
K.~Bahlali, B.~Mezerdi, and Y.~Ouknine.
\newblock Pathwise uniqueness and approximation of solutions of stochastic
  differential equations.
\newblock \emph{S\'eminaire de Probabilit\'e (Strasbourg)}, 32:\penalty0
  166--187, 1998.

\bibitem[Bahlali et~al.(2013)Bahlali, Maticiuc, and Zalinescu]{Bah-Mal-Zal13}
K.~Bahlali, L.~Maticiuc, and A.~Zalinescu.
\newblock Penalization method for a nonlinear {N}eumann {PDE} via weak
  solutions of reflected {SDE}s.
\newblock \emph{Elect. J. Probab.}, 18\penalty0 (102), 2013.

\bibitem[Bahlali et~al.(2015)Bahlali, Essaky, and Hassani]{beh2015}
K.~Bahlali, E.~Essaky, and M.~Hassani.
\newblock Existence and uniqueness of multidimensional {BSDEs} and of systems
  of degenerate {PDEs} with superlinear growth generator.
\newblock \emph{SIAM J. Math. Anal.}, 47\penalty0 (6):\penalty0 4251--4288,
  2015.

\bibitem[Bahlali et~al.(2017)Bahlali, Eddahbi, and Ouknine]{beo2017}
K.~Bahlali, M.~Eddahbi, and Y.~Ouknine.
\newblock Quadratic {BSDEs} with $\mathbb{L}^2$--terminal data {K}rylov's
  estimate and {I}t\^o--{K}rylov's formula and existence results.
\newblock \emph{Ann. Probab.}, 45\penalty0 (4):\penalty0 2377--2397, 2017.

\bibitem[Barles and Lesigne(1997)]{Bar-Les97}
G.~Barles and E.~Lesigne.
\newblock {SDEs}, {BSDEs} and {PDEs}.
\newblock In N.~{El Karoui} and N.~Mazliak, editors, \emph{Backward Stochastic
  Differential Equations}, pages 47--82. Longman, 1997.

\bibitem[Bou\'e and Dupuis(1998)]{Boue-Dup}
M.~Bou\'e and P.~Dupuis.
\newblock A variational representation for certain functionals of {B}rownian
  motion.
\newblock \emph{Ann. Probab.}, 26\penalty0 (4):\penalty0 1641--1659, 1998.

\bibitem[Chapman et~al.(1998)Chapman, Hunton, and Ockendon]{Chap-Hun-Ock}
S.~J. Chapman, B.~J. Hunton, and R.~Ockendon.
\newblock Vortices and boundaries.
\newblock \emph{Quart. Appl. Math.}, 56\penalty0 (3):\penalty0 507--519, 1998.

\bibitem[Cherny and Engelbert(2005)]{Cher-Engl05}
A.~S. Cherny and H.-J. Engelbert.
\newblock \emph{Singular Stochastic Differential Equations}.
\newblock Lecture Notes in Mathematics. Springer, 2005.

\bibitem[Dall'Aglio et~al.(2016)Dall'Aglio, Orsina, and Petitta]{Dall-Lui-Pet}
A.~Dall'Aglio, L.~Orsina, and F.~Petitta.
\newblock Existence of solutions of degenerate parabolic equations with
  singular terms.
\newblock \emph{Nonlinear Analysis}, 131:\penalty0 273--288, 2016.

\bibitem[Darling and Pardoux(1997)]{dp}
R.~Darling and E.~Pardoux.
\newblock Backward sde with monotonicity and random terminal time.
\newblock \emph{Ann. of Probab.}, 25\penalty0 (3):\penalty0 1135--1159, 1997.

\bibitem[Delbaen and Schachermayer(1994)]{DS94}
F.~Delbaen and W.~Schachermayer.
\newblock A general version of the fundamental theorem of asset pricing.
\newblock \emph{Math. Ann.}, 300\penalty0 (3):\penalty0 463--520, 1994.

\bibitem[Dong and Levine(1989)]{Dong-Levi89}
K.~Dong and H.~A. Levine.
\newblock On the blow-up of $u_t$ at quenching.
\newblock \emph{Proc. A.M.S.}, 106:\penalty0 1049--1056, 1989.

\bibitem[Drapeau et~al.(2013)Drapeau, Heyne, and Kupper]{DHK1101}
S.~Drapeau, G.~Heyne, and M.~Kupper.
\newblock Minimal supersolutions of convex {BSDEs}.
\newblock \emph{Ann. Probab.}, 41\penalty0 (6):\penalty0 3697--4427, 2013.

\bibitem[Drapeau et~al.(2016)Drapeau, Kupper, {Rosazza Gianin}, and
  Tangpi]{tarpodual}
S.~Drapeau, M.~Kupper, E.~{Rosazza Gianin}, and L.~Tangpi.
\newblock Dual representation of minimal supersolutions of convex {BSDEs}.
\newblock \emph{Ann. Inst. H. Poincar\'e Probab. Statist.}, 52\penalty0
  (2):\penalty0 868--887, 2016.

\bibitem[Duffie and Epstein(1992{\natexlab{a}})]{Duf-Eps92}
D.~Duffie and L.~G. Epstein.
\newblock Asset pricing with stochastic differential utility.
\newblock \emph{Econometrica}, 5\penalty0 (3):\penalty0 411--436,
  1992{\natexlab{a}}.

\bibitem[Duffie and Epstein(1992{\natexlab{b}})]{epstein03}
D.~Duffie and L.~G. Epstein.
\newblock Stochastic differential utility.
\newblock \emph{Econometrica}, 60\penalty0 (2):\penalty0 353--94,
  1992{\natexlab{b}}.

\bibitem[Duffie and Lions(1992)]{Duf-Lions}
D.~Duffie and P.-L. Lions.
\newblock {PDE} solutions of stochastic differential utility.
\newblock \emph{Journal of Mathematical Economics}, 21\penalty0 (6):\penalty0
  577--606, 1992.

\bibitem[Epstein and Zin(1989)]{epstein02}
L.~G. Epstein and S.~E. Zin.
\newblock Substitution, risk aversion, and the temporal behavior of consumption
  and asset returns: A theoretical framework.
\newblock \emph{Econometrica}, 57\penalty0 (4):\penalty0 937--69, July 1989.

\bibitem[Essaky and Hassani(2011)]{Ess-Has11}
E.~Essaky and M.~Hassani.
\newblock General existence results for reflected {BSDE} and {BSDE}.
\newblock \emph{Bull. Sci. Math}, 135:\penalty0 442--666, 2011.

\bibitem[Essaky and Hassani(2013)]{EH2013}
E.~Essaky and M.~Hassani.
\newblock Generalized {BSDE} with 2--reflecting barriers and stochastic
  quadratic growth.
\newblock \emph{J. Differential Equations}, 254\penalty0 (3):\penalty0
  1500--1528, 2013.

\bibitem[Fleming and Soner(2006)]{Flem-Soner-second}
W.~H. Fleming and H.~M. Soner.
\newblock \emph{Controlled {M}arkov processes and viscosity solutions},
  volume~25 of \emph{Stochastic Modelling and Applied Probability}.
\newblock Springer, New York, second edition, 2006.

\bibitem[Giachetti et~al.(2012)Giachetti, Petitta, and
  de~Le\`on]{Giachetti2012}
D.~Giachetti, F.~Petitta, and S.~S. de~Le\`on.
\newblock Elliptic equations having a singular quadratic gradient term and a
  changing sign datum.
\newblock \emph{Comm. Pure. Appl. Anal.}, 11:\penalty0 1875, 2012.

\bibitem[Giaghetti and Murat(2009)]{Giac-Mur09}
D.~Giaghetti and F.~Murat.
\newblock An elliptic problem with a lower order term having singular behavior.
\newblock \emph{Bollettino U.M.I.}, 9\penalty0 (II):\penalty0 349--370, 2009.

\bibitem[Heyne et~al.(2016)Heyne, Kupper, and Tangpi]{optimierung}
G.~Heyne, M.~Kupper, and L.~Tangpi.
\newblock Portfolio optimization under nonlinear utility.
\newblock \emph{Int. J. Theor. Appl. Fin.}, 19\penalty0 (5):\penalty0 1650029,
  2016.

\bibitem[Hu et~al.(2005)Hu, Imkeller, and M\"uller]{Hu-Imk-Mul}
Y.~Hu, P.~Imkeller, and M.~M\"uller.
\newblock Utility maximization in incomplete markets.
\newblock \emph{Ann. Appl. Probab.}, 15(3):\penalty0 1691--1712, 2005.

\bibitem[Hyndman(2009)]{Hyndman09}
C.~B. Hyndman.
\newblock A forward-backward {SDE} approach to affine models.
\newblock \emph{Math. Financ. Econ.}, 2009.

\bibitem[Imkeller et~al.(2011)Imkeller, R\'eveillac, and Zhang]{Imk-Rev-Zha}
P.~Imkeller, A.~R\'eveillac, and J.~Zhang.
\newblock Solvability and numerical simulation of {BSDEs} related to {BSPDEs}
  with applications to utility maximization.
\newblock \emph{Int. J. Theor. Appl. Finance}, 14:\penalty0 635--667, 2011.

\bibitem[Kaneko and Nakao(1988)]{Kaneko-Nakao}
H.~Kaneko and S.~Nakao.
\newblock A note on approximation for stochastic differential equations.
\newblock \emph{S\'eminaire de Probabilit\'e (Strasbourg)}, XXII:\penalty0
  155--162, 1988.

\bibitem[Kazamaki(1994)]{Kazamaki01}
N.~Kazamaki.
\newblock \emph{Continuous Exponential Martingales and {BMO}}, volume 1579 of
  \emph{Lecture Notes in Mathematics}.
\newblock Springer-Verlag, Berlin, 1994.

\bibitem[Kharroubi et~al.(2013)Kharroubi, Lim, and Ngoupeyou]{kharroubial}
I.~Kharroubi, T.~Lim, and A.~Ngoupeyou.
\newblock Mean-variance hedging on uncertain time horizon in a market with a
  jump.
\newblock \emph{Appl. Math. Optim}, 68\penalty0 (2):\penalty0 413--444, 2013.

\bibitem[Kobylanski(2000)]{kobylanski01}
M.~Kobylanski.
\newblock Backward stochastic differential equations and partial differential
  equations with quadratic growth.
\newblock \emph{Ann. Probab.}, 28\penalty0 (2):\penalty0 558--602, 2000.

\bibitem[Kreps and Porteus(1978)]{Kreps-Port78}
D.~Kreps and E.~Porteus.
\newblock Temporal resolution of uncertainty and dynamic choice theory.
\newblock \emph{Econometrica}, 46:\penalty0 185--200, 1978.

\bibitem[Krylov and R\"ockner(2005)]{KR05}
N.~Krylov and M.~R\"ockner.
\newblock Strong solutions of stochastic equations with singular time dependent
  drift.
\newblock \emph{Probab. Theory Related Fields}, 131:\penalty0 154--196, 2005.

\bibitem[Lions(1985)]{LionsDuke}
P.-L. Lions.
\newblock Neumann type boundary conditions for {H}amilton-jacobi equations.
\newblock \emph{Duke Math. J.}, 52\penalty0 (3):\penalty0 793--820, 1985.

\bibitem[Lions and Sznitman(1984)]{Lions-Szit84}
P.~L. Lions and A.~S. Sznitman.
\newblock Stochastic differential equations with reflecting boundary
  conditions.
\newblock \emph{Comm. Pure. Appl. Math.}, 37:\penalty0 511--537, 1984.

\bibitem[Menoukeu-Pamen and Mohammed(2017)]{MeMo17}
O.~Menoukeu-Pamen and S.~E.~A. Mohammed.
\newblock Flows for singular stochastic differential equations with unbounded
  drifts.
\newblock \emph{Preprint}, 2017.

\bibitem[Menoukeu-Pamen et~al.(2019)Menoukeu-Pamen, Ouknine, and
  Tangpi]{Menou-Ouk-Tan}
O.~Menoukeu-Pamen, Y.~Ouknine, and L.~Tangpi.
\newblock Pathwise uniqueness of non-uniformly elliptic sdes with rough
  coefficients.
\newblock \emph{J. Theoret. Probab.}, 32:\penalty0 1892--1908, 2019.

\bibitem[Merle(1992)]{Merle92}
F.~Merle.
\newblock Solution of a nonlinear heat equation with arbitrary given blow-up
  points.
\newblock \emph{Comm. Pure. Appl. Math.}, 45:\penalty0 263--300, 1992.

\bibitem[Merle and Zaag(1997)]{Mer-Zaag97}
F.~Merle and H.~Zaag.
\newblock Reconnection of vortex with the boundary and finite time quenching.
\newblock \emph{Nonlinearity}, 10:\penalty0 1497--1550, 1997.

\bibitem[Nutz(2012)]{Nutz2012}
M.~Nutz.
\newblock The {B}ellman equation for power utility maximization with
  semimartingales.
\newblock \emph{Ann. Appl. Probab.}, 22\penalty0 (1):\penalty0 363--406, 2012.

\bibitem[Pardoux(1999)]{Pardoux99}
E.~Pardoux.
\newblock {BSDEs}, weak convergence and homogenization of semilinear {PDEs}.
\newblock In F.~Clarke and R.~Stern, editors, \emph{Nonlinear Analysis,
  Differential Equations and Control}, pages 503--549. Kluwer Academic
  Publishers, 1999.

\bibitem[Pardoux and Peng(1992)]{peng01}
E.~Pardoux and S.~Peng.
\newblock Backward stochastic differential equations and quasilinear parabolic
  partial differential equations.
\newblock In B.~Rozuvskii and R.~Sowers, editors, \emph{Stochastic partial
  differential equations and their applications}, volume 176, pages 200--217.
  Springer, Berlin, New York, 1992.

\bibitem[Pardoux and Zhang(1998)]{Par-Zha98}
E.~Pardoux and S.~Zhang.
\newblock Generalized {BSDEs} and nonlinear {N}eumann boundary value problems.
\newblock \emph{Probab. Theory Related Fields}, 110:\penalty0 535--558, 1998.

\bibitem[Rockafellar and Wets(1998)]{Rockafellar1998}
R.~T. Rockafellar and R.~J.-B. Wets.
\newblock \emph{Variational Analysis}.
\newblock Springer, Berlin, New York, 1998.

\bibitem[Steele(2001)]{MSteel01}
J.~M. Steele.
\newblock \emph{Stochastic Calculus and Financial Applications}.
\newblock Springer-Verlag, New York, 2001.

\bibitem[Stroock and Varadhan(1969)]{Stroock-Varad}
D.~Stroock and S.~Varadhan.
\newblock Diffusion processes with continuous coefficients, {I}, {II}.
\newblock \emph{Comm. Pure. Appl. Math.}, 22\penalty0 (345-400):\penalty0
  479--530, 1969.

\bibitem[Veretennikov(1981)]{Ver81}
A.~Y. Veretennikov.
\newblock On the strong solutions and explicit formulas for solutions of
  stochastic differential equations.
\newblock \emph{Math. URSS Sbornik}, 39\penalty0 (3):\penalty0 387--403, 1981.

\bibitem[Xing(20017)]{Xing17}
H.~Xing.
\newblock Consumption investment optimization with {E}pstein-{Z}in utility in
  incomplete markets.
\newblock \emph{Finance Stoch.}, 21\penalty0 (1):\penalty0 227--262, 20017.

\bibitem[Zeidler(1985)]{Zeidler}
E.~Zeidler.
\newblock \emph{Nonlinear Functional Analysis and its Applications {III}:
  Variational Methods and Optimization}.
\newblock Springer-Science +Buisiness Media, LLC, New York, 1985.

\bibitem[Zhang(1994)]{T_Zhang94}
T.~Zhang.
\newblock On the strong solutions of one-dimensional stochastic differential
  equations with reflecting boundary.
\newblock \emph{Stoch. Proc. Appl.}, 50:\penalty0 135--147, 1994.

\end{thebibliography}

	\vspace{1cm}
	
	\noindent Khaled Bahlali: IMATH, Universit\'e de Toulon,  EA 2134,
	83957 La Garde Cedex, France.\\
	Acknowledgment: Partially supported by PHC Toubkal TBK/18/59. \\
	{\small \textit{E-mail address:} khaled.bahlali@univ-tln.fr}.\bigskip

	\noindent Ludovic Tangpi: Department of Operations Research and Financial
	Engineering, Princeton University, Princeton, 08540, NJ;  USA.\\
	Acknowledgment: Partially supported by NSF grant DMS-2005832.\\
	{\small\textit{E-mail address:} ludovic.tangpi@princeton.edu}.
	
\end{document}